\numberwithin{equation}{section}
\newtheorem{theorem}[equation]{Theorem}
\newtheorem*{theorem*}{Theorem}
\newtheorem*{Hodge Conjecture}{Hodge Conjecture}
\newtheorem*{Hodge Conjecture (reduced)}{Hodge Conjecture (reduced)}
\newtheorem*{Lefschetz $(1,1)$-theorem}{Lefschetz $(1,1)$-theorem}
\newtheorem*{Jacobi Inversion Theorem}{Jacobi Inversion Theorem}
\newtheorem{lemma}[equation]{Lemma}
\newtheorem{proposition}[equation]{Proposition}
\newtheorem{conjecture}[equation]{Conjecture}
\theoremstyle{definition}
\theoremstyle{remark}
\newtheorem*{remark}{Remark}
\theoremstyle{plain}
\newcommand{\opname}[1]{\operatorname{\mathsf{#1}}}
\newcommand{\ie}{{\rm i.e.}}
\newcommand{\cf}{{\rm cf. }}
\newcommand{\C}{\mathbb{C}}
\newcommand{\Q}{\mathbb{Q}}
\newcommand{\Z}{\mathbb{Z}}
\renewcommand{\L}{\mathbb{L}}
\renewcommand{\P}{\mathbb{P}}
\newcommand{\pr}{\mathit{pr}}
\newcommand{\Hom}{\operatorname{Hom}}
\DeclareMathOperator{\im}{im}
\DeclareMathOperator{\coker}{coker}
\renewcommand{\Im}{\operatorname{Im}}
\DeclareMathOperator{\GL}{GL}
\newcommand{\N}{\mathbb{N}}
\newcommand{\sg}{\opname{sg}}
\newcommand{\sm}{\opname{sm}}
\newcommand{\F}{\mathbb{F}}
\newcommand{\G}{\mathbb{G}}
\renewcommand{\O}{\mathbb{O}}
\renewcommand{\H}{\mathcal{H}}
\renewcommand{\tilde}[1]{\widetilde{#1}}
\newcommand{\Aut}{\opname{Aut}}
\renewcommand{\a}{\opname{a}}
\newcommand{\Gr}{\opname{G}}
\newcommand{\U}{\mathbb{U}}
\newcommand{\I}{\mathfrak{I}}
\renewcommand{\pr}{\opname{pr}}
\newcommand{\Div}{\opname{Div}}
\newcommand{\prim}{\opname{prim}}
\newcommand{\van}{\opname{van}}
\newcommand{\p}{\opname{p}}
\newcommand{\PP}{\mathcal{P}}
\begin{document}


\title[A new approach towards Lefschetz $(1,1)$--Theorem]{A new approach towards Lefschetz $(1,1)$--Theorem}
\author{Erjuan Fu}
\address{Yau Mathematical Sciences Center,
Tsinghua University, Beijing, 100084, China}
 \email{ejfu@tsinghua.edu.cn; 473051732@qq.com}

\begin{abstract}
Let $S$ be a complex projective surface.
Lefschetz originally proved
Lefschetz $(1, 1)$--Theorem by studying a Lefschetz pencil of hyperplane sections of $S$ and the Abel--Jacobi mapping.
In this paper, we attack Lefschetz $(1, 1)$--Theorem by constructing certain two-parameter
families of twice hyperplane sections of $S$  and then applying the topological Abel--Jacobi mapping.
Our geometric constructions would give an inductive approach and some insight for higher dimensional cases.

We prove a strong tube theorem which generalizes Schnell's tube theorem to integral homology groups for complex projective curves and then obtain a Jacobi-type inversion theorem. In the end, we give a geometric description for the deformation space of an elementary vanishing cycle over a generic net.
\end{abstract}

\hypersetup{
linkcolor=blue}
\maketitle
\tableofcontents


\section{Introduction}\label{s:introduction}

Lefschetz $(1,1)$-Theorem \cite{Lef} is well known as the Hodge conjecture on $(1,1)$--classes. 

\begin{Lefschetz $(1,1)$-theorem} Let $S$ be a complex projective surface.
 Every integral primitive cohomology class of type $(1,1)$ on $S$ is a linear combination with integral coefficients of the cohomology classes of complex algebraic curves on $S$.
 \end{Lefschetz $(1,1)$-theorem}

By choosing a very ample divisor, we can assume that $S$ is a nondegenerate complex projective surface of degree $d$ in $\P^N$.  A cohomology class on $S$ is called \textit{primitive} if it restricts to zero on all smooth hyperplane sections.

Lefschetz originally proved this theorem by studying  a certain one-parameter
family of hyperplane sections of $S$ in $\P^N$, which is famous as a Lefschetz pencil \cite[Chapter 2]{Voisin2}.
In fact,  monodromy representations can be computed using a Lefschetz pencil 
by Zariski's Theorem \cite[Theorem 3.22]{Voisin2}.  Therefore, instead of considering all hyperplane sections, we only need to pick a Lefschetz pencil of hyperplane sections.

Besides the Lefschetz pencil, the other important tool in Lefschetz original method is the Abel--Jacobi mapping.
 Based on 
 a theorem of Zucker \cite{Zucker-NF} which says that there is a one-to-one correspondence between cohomology classes of type $(p,p)$ and normal functions, 
 Lefschetz's method would provide a sort of inductive proof of the Hodge conjecture if Jacobi Inversion Theorem holds, \ie, the Abel--Jacobi mapping is surjective.  
 Unfortunately, the  Jacobi Inversion Theorem fails in higher dimensions. Actually, the image of the Abel--Jacobi mapping for a  complex $(2n-1)$-dimension manifold $X$ is a complex subtorus of the  intermediate Jacobian $J(X)$ whose tangent space at $0$ is contained in $H^{n-1,n}(X)$ \cite[Corollary 12.19]{Voisin1}.
 That is to say, in general, it is impossible to parametrize the intermediate Jacobian $J(X)$ by \emph{algebraic cycles} of middle degree.

In recent years,  it has come to be understood that one-parameter families are not sufficient.  Instead, we construct certain two-parameter families called \emph{generic nets}.  The idea is to continue to take a Lefschetz fiberation for each hyperplane section over the first Lefschetz pencil  and make sure that any singular hyperplane section in the second fiberation has either at most two ordinary double points as singularities or only one ordinary cusp point as singularity. 

More precisely, denote by  $\O=:(\P^N)^\vee$ which parametrizes all hyperplane sections of $S$.
Pick a general line $\L$ in $\O$, 
then $\{S_t\}_{t\in \L}$ forms a Lefschetz pencil \cite[Corollary 2.10]{Voisin2}, where $S_t=\{x\in S\,|\, t(x)=0\}$.
In order to construct a generic net, we consider  the quotient bundle  \begin{equation*}\label{quot-bdle}
\mathcal{Q}:=\frac{{\O\times V}}{\mathcal{T}}
\end{equation*}
over $\O$, where $\O=\P(V)$ for some complex vector space $V\cong \C^{N+1}$ and
$\mathcal{T}$ denotes
the tautological line bundle 
over $\O$  whose fiber at each point is the line in $V$ represented by the point. Clearly, the fiber of the corresponding projective bundle $\P\mathcal{Q}$ over $t\in \O$ parametrizes all hyperplane sections of $S_t$.
Denote the pullback bundle over $\L$ by $\mathcal{Q}_\L=\L\otimes_\O \mathcal{Q}$.
 Since the first Chern class of $\mathcal{Q}$ is $1$, we have
\begin{equation*}\label{quot-bdle}
\mathcal{Q}_\L\cong \mathcal{O}_{\L}(1)\oplus \mathcal{O}_\L^{\oplus N-1}.
\end{equation*}
We then pick a general projective subbundle $\PP$  of rank $1$ over $\L$ in $\P\mathcal{Q}_\L$  such that
\begin{equation*}\label{quot-bdle}
\PP\cong \P(\mathcal{O}_{\L}(1)\oplus \mathcal{O}_\L^{\oplus 2})
\end{equation*}
and the fiber over each $t\in \L^{\sm}$
\begin{equation*}\label{quot-bdle}
\PP_t\cong \P(\mathcal{O}_{\L}(1)\oplus \mathcal{O}_\L)
\end{equation*}
 is a Lefschetz pencil for $S_t$, where  $\L^{\sm}$ denotes the open subset of $\L$ corresponding to all smooth hyperplane sections.

Any point in $\PP$ can be written as $(t,r)$ with $t\in \L$. Denote the twice hyperplane section over $(t,r)$ by $$S_{t,r}:=\{x\in S\,\vert\, t(x)=r(x)=0\}$$ which is a set of at most $d$ points in $S$.
Let $\PP^{\sm}$ be the open subset of $\PP$ such that all corresponding twice hyperplane sections  are $d$ distinct points.
We  call such $\PP$ as a \emph{generic net} if the complement $\PP\setminus \PP^{\sm}$
is an algebraic curve with only finite many ordinary nodes and ordinary cusps as its singularities. We have the following commutative diagram.
$$\xymatrix{
S_{\PP^{\sm}}:=S_{\L^{\sm}}{\times_{\L^{\sm}}} \PP^{\sm} \ar[d]^{\sigma} \ar[r]  & S_{\L^{\sm}} \ar[d]_{\pi}\ar[r]         &S  \\
\PP^{\sm}  \ar[r]^{\rho}  & \L^{\sm}     &           }$$
where $S_{\L^{\sm}}=\left\{(x, t)\in S\times \L^{\sm}\,|\, t(x)=0\right\}$  
denotes the incidence variety.

Next,  we use the topological Abel--Jacobi mapping \cite{TopAJ} to replace the Abel--Jacobi mapping so that we have a similar Jacobi inversion theorem. 
Denote by $H_{\prim}^{2}(S, \Q)$ the second primitive cohomology group of $S$. Nori¡¯s famous Connectivity
Theorem \cite[Corollary 4.4 on p. 364]{Nori} gives us the following isomorphism
\begin{align*}
H_{\prim}^{2}(S, \Q)&\cong H^1(\L^{\sm}, R^{1}_{\van}\pi^{\sm}_*\Q).
\\
    H^{1}(S_t, \Q)&\cong H^1(\PP_t^{\sm}, R^{0}_{\van}\sigma^{\sm}_{*}\Q),\qquad t\in \L^{\sm},
    \end{align*}
   where $R^{1}_{\van}\pi^{\sm}_*\Q$ denotes the local systems over $\mathcal{P}^{\sm}$ fiberwisely given by $H_{\van}^{1}(S_t, \Q)$ which is a summand of  $H^{1}(S_t, \Q)$ \cite[Proposition 2.27]{Voisin2} and $R_{\van}^0\sigma_*^{\sm}(\Z)$ denotes the local systems over $\mathcal{P}^{\sm}$ fiberwisely given by $$H_{\van}^0(S_{t,r}, \Z)=\bigoplus\limits_{i=1}^{d-1} \Z(P_i-P_0)$$ with $S_{t,r}:=\{P_0, P_1, \cdots, P_{d-1}\}$.
These isomorphisms lead us to consider $H_{\van}^{1}(S_t, \Z)$ (see Section \ref{s:prim-van} for basic definitions) and
the local system $R_{\van}^0\sigma_*^{\sm}(\Z)$ over $\mathcal{P}^{\sm}$.
 Via the flat Gauss-Manin connection, the total space naturally forms an infinite-sheeted covering space
$$|R_{\van}^0\sigma_*(\Z)|\to \mathcal{P}^{\sm}.$$
Pick an elementary vanishing cycle $\alpha\in H_{\van}^0(S_{t,r}, \Z)$.
By an elementary vanishing cycle, we mean that it specializes an indivisible vanishing cycle which vanishes as approaching to one singular fiber in a family. For instance, $P_i-P_j$ is an elementary vanishing cycle for any $i\neq j$.
There is a unique component $\mathbb{U}$ in $|R_{\van}^0\sigma_*^{\sm}(\Z)|$ corresponding to the stabilizer of $\alpha$ in $\pi_1(\mathcal{P}^{\sm})$. For any $t\in \L^{\sm}$, by composing the topological Abel--Jacobi mapping  \cite{TopAJ} with the projection $J(S_t) \to J_{\van}(S_t)$, we consider
$$\a_t: \mathbb{U}_t \to J_{\van}(S_t),$$
where  the vanishing intermediate Jacobian  $J_{\van}(S_t)$ is a quotient torus of the Jacobian variety $J(S_t)$  (see Section \ref{sec-top} for precise definitions).
We will use the fiber $\mathbb{U}_t$ to construct a parametrization of $J_{\van}(S_t)$ for all $t\in \L^{\sm}$.
%
%
In particular, we have the following similar Jacobi inversion theorem.

\begin{theorem}[Jacobi-type Inversion Theorem]\label{thm-Jacobi}
Let $S$ be a complex projective surface and $\PP$ be a generic net over a Lefschetz pencil $\L$ on $S$. Let $t\in \L^{\sm}$. There is a positive integer $K$ such that 
$$\a_t^{(k)}: \mathbb{U}_t^{(k)}:=\mathbb{U}_t\times_{\mathcal{P}_t}\mathbb{U}_t\times_{\mathcal{P}_t}\cdots \times_{\mathcal{P}_t}\mathbb{U}_t\to J_{\van}(S_t).$$ is surjective for all $k\geq K$.
\end{theorem}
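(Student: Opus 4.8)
The plan is to reduce the theorem to a non-degeneracy statement about the Abel--Jacobi images of elementary vanishing cycles, and then to pass from non-degeneracy to surjectivity of the iterated map by a constructibility argument. First I would use the strong tube theorem to describe $\a_t$ concretely: a point $u\in\mathbb{U}_t$ lying over $r\in\mathcal{P}_t^{\sm}$ is an elementary vanishing cycle, that is a $0$-cycle $P_a-P_b$ supported on the twice hyperplane section $S_{t,r}\subset S_t$, and $\a_t(u)$ is the image in $J_{\van}(S_t)$ of its ordinary Abel--Jacobi class $[AJ_{S_t}(P_a-P_b)]$; hence $\a_t^{(k)}(u_1,\dots,u_k)=\sum_{i=1}^{k}\a_t(u_i)$. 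Fix $x_0\in S_t$ and write $\beta\colon S_t\to J_{\van}(S_t)$ for $P\mapsto[AJ_{S_t}(P-x_0)]$, so that $\a_t(u)=\beta(P_a)-\beta(P_b)$. Since $\mathbb{U}_t$ is the monodromy component of $\alpha$, the pairs $(P_a,P_b)$ occurring here form the $\pi_1(\mathcal{P}_t^{\sm})$--orbit of the initial one; because $\mathcal{P}$ is a generic net the members of $\mathcal{P}_t$ carry a single node each, so the base-point-free pencil $\mathcal{P}_t$ is a degree-$d$ morphism $f_t\colon S_t\to\mathcal{P}_t\cong\P^1$ with simple branching, and its sheet monodromy, being generated by transpositions attached to a connected graph, is the full symmetric group $\S_d$. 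In particular the orbit contains every pair of distinct points of $S_t$ lying in a common fibre of $f_t$.

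The heart of the proof is that $\im(\a_t)$ lies in no translate of a proper subtorus of $J_{\van}(S_t)$. Suppose it lay in $b+B$ for a subtorus $B\subsetneq J_{\van}(S_t)$. Then for any two points $P,Q$ in a common fibre of $f_t$ we have $\beta(P)-\beta(Q)\in\im(\a_t)-\im(\a_t)\subseteq B$, so the composite $\bar\beta\colon S_t\to J_{\van}(S_t)/B$ is constant on the fibres of $f_t$; as $f_t$ is finite, $\bar\beta$ factors through a morphism $\P^1\to J_{\van}(S_t)/B$, which is constant since an abelian variety contains no rational curve. Hence $\im(\beta)\subseteq B$. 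But $\beta$ is the composite of the Abel--Jacobi embedding $S_t\hookrightarrow J(S_t)$ with the surjection $J(S_t)\twoheadrightarrow J_{\van}(S_t)$, and the image of a curve under its Abel--Jacobi embedding generates its Jacobian; so $\im(\beta)$ generates $J_{\van}(S_t)$, contradicting $\im(\beta)\subseteq B\subsetneq J_{\van}(S_t)$. Thus the subgroup of $J_{\van}(S_t)$ generated by $\im(\a_t)$ is all of $J_{\van}(S_t)$, and since each $P_a-P_b$ comes paired with $P_b-P_a$, every point of $J_{\van}(S_t)$ is a \emph{finite sum} of elements of $\im(\a_t)$; equivalently $\bigcup_{k\ge1}\im(\a_t^{(k)})=J_{\van}(S_t)$.

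It remains to make $k$ uniform; we may assume $d\ge3$, since otherwise $J_{\van}(S_t)=0$. Then the sets $\im(\a_t^{(k)})$ increase with $k$ (split one summand $P_a-P_b$ as $(P_a-P_c)+(P_c-P_b)$ with $P_c$ a third point of $S_{t,r}$), and each is constructible: $\a_t^{(k)}$ extends to a morphism on the compactification of $\mathbb{U}_t^{(k)}$ obtained by filling in the finitely many branch points, because the Abel--Jacobi class varies continuously as two points of $S_{t,r}$ collide, so Chevalley's theorem applies. Since $J_{\van}(S_t)$ is Noetherian the Zariski closures $\overline{\im(\a_t^{(k)})}$ stabilize; their union being dense, they stabilize to $J_{\van}(S_t)$, so $\im(\a_t^{(k)})$ is Zariski dense for all $k$ beyond some $K_0$. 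A dense constructible subset of the irreducible variety $J_{\van}(S_t)$ contains a dense Zariski-open $\Omega$, and for any $x\in J_{\van}(S_t)$ the dense opens $\Omega$ and $x-\Omega$ meet, whence $\Omega+\Omega=J_{\van}(S_t)$. Therefore $\im(\a_t^{(2K_0)})\supseteq\Omega+\Omega=J_{\van}(S_t)$, and by monotonicity $\a_t^{(k)}$ is surjective for all $k\ge K:=2K_0$.

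I expect the real obstacle to be the first part: extracting from the strong tube theorem a description of the topological Abel--Jacobi map precise enough at the integral level to identify $\a_t(u)$ with $\beta(P_a)-\beta(P_b)$ and $\mathbb{U}_t$ with the orbit of all elementary pairs, and confirming that the sheet monodromy of $f_t$ is genuinely the full $\S_d$ --- this is where the hypothesis that $\mathcal{P}$ is a \emph{generic} net (distinct single nodes, simple branching, base-point-free pencils $\mathcal{P}_t$) is indispensable. Once that is in place the non-degeneracy argument is robust; but one should avoid a naive dimension count, since $\im(\a_t)$ is merely the image of a curve in $J_{\van}(S_t)$ --- it generates the torus as a group yet is far from Zariski dense --- so it is the Noetherian/constructibility step, not dimension, that produces actual surjectivity.
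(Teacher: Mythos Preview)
Your argument is correct and takes a genuinely different route from the paper's. The paper proceeds by an explicit differential computation: it places itself at the special tuple $s=(s_1,\dots,s_e)$ of points of $\F\cong\mathbb U_t$ lying over the ramification points $x_1,\dots,x_e$ of the pencil $S_t\to\mathcal P_t$, observes that $d_{s_i}\theta_i=2\,d_{s_i}\varphi_i^{+}\neq 0$, and shows that the Jacobian of the sum map at $s$ has the same rank as the Brill--Noether matrix $\bigl(\omega_j(x_i)/dz_i\bigr)_{j,i}$; this matrix has full rank $g$ because a linear relation among its rows would produce a nonzero section of $\Omega_{S_t}(-\sum x_i)$, impossible since $\deg\bigl(K_{S_t}-\sum x_i\bigr)=-2d<0$. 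Properness plus a point of maximal rank then forces surjectivity, and one can take $K=g$. Your approach replaces this local computation by a global generation argument (the image cannot lie in a translate of a proper subtorus because $\beta$ would factor through $\mathbb P^1$) together with a Noetherian/constructibility step. What you gain is robustness and conceptual clarity---your argument would adapt to other covers and other targets with essentially no change---while what the paper gains is an explicit bound $K=g$ and a proof that identifies exactly where the degree hypothesis $d>0$ enters.

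Two small remarks. First, you do not need the Strong Tube Theorem to identify $\a_t(u)$ with $\beta(P_a)-\beta(P_b)$: for $n=1$ there are no $(n,n-2)$-forms, so the correction term in the topological Abel--Jacobi map vanishes and $\a_t$ is literally the classical Abel--Jacobi map restricted to differences of pairs; this is how the paper sets things up in Section~4.1 (the map $\theta=\a_X\circ\xi\circ\eta$). Second, your sentence ``$\beta(P)-\beta(Q)\in\im(\a_t)-\im(\a_t)\subseteq B$'' silently uses a third point $R$ in the same fibre to write $\beta(P)-\beta(Q)=\a_t(P,R)-\a_t(Q,R)$; this is exactly your later hypothesis $d\ge 3$, and once said, it also shows $0\in\overline{\im(\a_t)}$, so the translate $b+B$ may be taken to be $B$ itself.
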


Note that $J_{\van}(S_t)$ is a compact complex torus, we  prove this theorem by considering the induced morphism on fundamental groups, which coincides with the tube mapping constructed by Schnell \cite{Tube} up to a sign.
Inspired by Schnell's result \cite[Theorem 1]{Tube} and
note that the tube class of an elementary vanishing cycle is in $H_{1}^{\van}(S_t, \Z)$,
we prove Theorem \ref{thm-Jacobi} by showing the following strong tube theorem in advance.
\begin{theorem}[Strong Tube Theorem]\label{thm-tube}
Let $S$ be a complex projective surface and $\PP$ be a generic net over a Lefschetz pencil $\L$ on $S$. Let $(t,r)\in \PP^{\sm}$ with
$t\in \L^{\sm}$ and
 $\alpha\in H_{0}^{\van}(S_{t,r}, \Z)$ be an elementary vanishing cycle. Denote by $G_\alpha$  the stabilizer of $\alpha$ in $\pi_1(\mathcal{P}_t^{\sm}, r)$.
 Then the image of the topological Abel--Jacobi homomorphism
 $$\a_{t*}: G_{\alpha} \to H_{1}^{\van}(S_{t}, \Z),$$
 is $m H_{1}^{\van}(S_t, \Z)$ for some positive integer $m$.
\end{theorem}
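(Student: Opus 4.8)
The strategy is to identify $\a_{t*}$ with Schnell's tube homomorphism and then to determine its image by combining Schnell's theorem with the arithmetic of the Picard--Lefschetz monodromy on $H_1^{\van}(S_t,\Z)$. Recall first that the class $\a_{t*}(g)$ attached to $g\in G_\alpha$ is a tube class: writing $\alpha=P_1-P_0$ and picking a loop in $\PP_t^{\sm}$ representing $g$, the Gauss--Manin transport of the $0$-cycle $\alpha$ along this loop sweeps out two loops $\ell_0,\ell_1$ in $S_t$ (the paths traced by $P_0$ and $P_1$, which close up because $g$ fixes both points), and $\a_{t*}(g)=\pm\bigl([\ell_1]-[\ell_0]\bigr)\in H_1^{\van}(S_t,\Z)$; this agrees up to sign with the tube map constructed in \cite{Tube}. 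Schnell's tube theorem \cite[Theorem~1]{Tube}, the rational precursor of the present statement, shows that the tube classes of vanishing $0$-cycles span $H_1^{\van}(S_t,\Q)$, so $\Lambda:=\im\a_{t*}$ is a subgroup of finite index; it remains to prove $\Lambda=m\,H_1^{\van}(S_t,\Z)$ for a single integer $m\geq 1$ (the case $H_1^{\van}(S_t,\Z)=0$ being vacuous).

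\emph{Independence of the vanishing cycle.} Let $w\in\pi_1(\PP_t^{\sm},r)$ and $\alpha'=w\cdot\alpha$, so that $G_{\alpha'}=wG_\alpha w^{-1}$. Transporting $\alpha'$ around the $w$-conjugate of a loop representing $g$ traces out, in $S_t$, precisely the two loops associated with $g$, each bordered by a transport arc of $w$ and its reverse; since such a border is null-homologous, the class in $H_1(S_t,\Z)$ is unchanged, so the homomorphism attached to $\alpha'$ again has image $\Lambda$. Now $\PP_t$ is a Lefschetz pencil for the connected curve $S_t$, so its monodromy $\pi_1(\PP_t^{\sm})\to\S_d$ on the $d$ points of $S_{t,r}$ is transitive and is generated by the transpositions arising from its singular fibres, hence equals $\S_d$. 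Therefore every elementary vanishing cycle of $S_{t,r}$ is $\pi_1(\PP_t^{\sm})$-conjugate to $\pm\alpha$, and $\Lambda$ depends only on $t$.

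\emph{Monodromy invariance and conclusion.} Since the covering $\mathbb{U}\to\PP^{\sm}$ and the tube construction depend only on the topology of the family $S_{\PP^{\sm}}\to\PP^{\sm}\to\L^{\sm}$, they are functorial as $t$ varies over $\L^{\sm}$; combined with the previous step this makes the sublattices $\Lambda$ into a sub-local system of $t\mapsto H_1^{\van}(S_t,\Z)$, i.e.\ $\Lambda$ is invariant under the monodromy group $\Gamma$ of the Lefschetz pencil $\L$ acting on $H_1^{\van}(S_t,\Z)$. Finally, $\Gamma$ is generated by the Picard--Lefschetz transvections along the vanishing cycles of $\L$, which span $H_1^{\van}(S_t,\Z)$ and form a single $\Gamma$-orbit; hence, for every prime $p$, the reduction $H_1^{\van}(S_t,\F_p)$ is irreducible under the mod-$p$ monodromy group, because a nonzero proper submodule $W$ would force each reduced vanishing cycle into $W$ or into $W^{\perp}$, contradicting transitivity together with non-degeneracy of the intersection pairing. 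A finite-index sublattice that is invariant under a group acting irreducibly modulo every prime must be a scalar multiple of the whole lattice: passing to $\Z_p$, one gets $\Lambda\otimes\Z_p=p^{k_p}\bigl(H_1^{\van}(S_t,\Z)\otimes\Z_p\bigr)$ for each $p$, whence $\Lambda=m\,H_1^{\van}(S_t,\Z)$ with $m=\prod_p p^{k_p}$.

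The main obstacle is the functoriality invoked at the start of the last step: one has to show, for each $\gamma\in\pi_1(\L^{\sm})$, that dragging $\alpha$ together with its stabilizer once around $\gamma$ intertwines the associated tube homomorphisms with the Gauss--Manin action on $H_1^{\van}(S_t,\Z)$, so that the image sublattices really vary in a flat family over $\L^{\sm}$. This is exactly where the generic-net hypothesis enters, since one must control the topology of the covering $\mathbb{U}$ --- equivalently, the deformation space of the elementary vanishing cycle --- near the finitely many values of $t$ over which $\PP\setminus\PP^{\sm}$ acquires its nodes and cusps; granting this, the remaining inputs are Schnell's theorem and the classical irreducibility of the mod-$p$ Lefschetz monodromy representation.
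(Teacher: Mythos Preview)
Your overall architecture matches the paper's: show that $\Lambda:=\im\a_{t*}$ is a nonzero sublattice of $H_1^{\van}(S_t,\Z)$ invariant under the monodromy $\Gamma$ of the outer Lefschetz pencil $\L$, then classify such sublattices. Your classification via mod-$p$ irreducibility is a valid alternative to the paper's explicit Picard--Lefschetz computation (Lemma~\ref{lem-top-2}), and your independence-of-$\alpha$ step via conjugation is correct. The $\Gamma$-invariance you flag as the ``main obstacle'' is indeed needed; the paper treats the analogous step by an Ehresmann-type continuity argument over the Grassmannian of Lefschetz pencils (Lemma~\ref{thm-top-2}).

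The genuine gap, which you do not flag, is your appeal to Schnell's theorem for the finite-index (equivalently, nonzero) step. Schnell asserts that the tube classes $\tau(g,\beta)$ span $H_1(S_t,\Q)$ as $(g,\beta)$ ranges over \emph{all} pairs with $g\beta=\beta$; your $\Lambda$, even after the independence step, is only the span over \emph{elementary} $\beta$. These need not coincide: if the image of $g$ in $\S_d$ has no fixed points (e.g.\ $g\mapsto(P_0P_1)(P_2P_3)$ for $d=4$), then $V^g$ contains no elementary class whatsoever, yet such pairs $(g,\beta)$ contribute to Schnell's span with no reason to lie in $\Lambda$. So Schnell does not yield $\Lambda\neq 0$, and without this the $\Gamma$-invariant sublattice could simply be zero. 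The paper establishes nonvanishing by a completely different, geometric argument (Lemma~\ref{thm-top-3}): the restriction of $\a_t$ to the covering $\F$ attached to $G_\alpha$ is realized as a holomorphic map $\theta:\F\to J(S_t)$, and one computes $d_{u_1}\theta=2\,d_{u_1}\varphi^+\neq 0$ at the point $u_1$ lying over a branch value of the inner pencil, using Abel's theorem together with the fact that the two distinguished sheets $\phi^\pm$ are unramified there while the remaining $d-2$ sheets are ramified. A nonzero differential then forces $\theta_*\neq 0$ on $H_1$ via the Albanese property, whence $\Lambda\neq 0$.
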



We conjecture this result holds in higher dimensional cases by connecting with Schnell's tube mapping in Section \ref{sec-top}.

Now, we go back to the Lefschetz $(1,1)$--Theorem. 
Given a nonzero primitive cohomology class $\eta$ of type $(1,1)$, there is a normal function
$\nu_\eta\in H^0(\L^{\sm}, \mathcal{J})$ such that $\eta=[\nu_\eta]$ is the cohomology class of $\nu_\eta$, where $\mathcal{J}:=\bigcup\limits_{t\in\L^{\sm}} J(S_t)$ is a complex manifold and forms an analytic fiber space.
With the help of Theorem \ref{thm-Jacobi}, we can lift the normal function $\nu_\eta$ locally continuously to a section
$\mu_\eta: \L^{\sm} \to \mathbb{U}^{(N)}$
such that we have the following commutative diagram
$$\xymatrix{\mathbb{U}^{(N)}\ar[r]^{\a^{(N)}}\ar@{<-}[rd]^{\mu_\eta}&\mathcal{J}\ar@{<-}[d]^{\nu_\eta}\\ \PP^{\sm}\ar@{.>}[r]_\rho\ar@{<.}[u]^\sigma&\L^{\sm}}.$$

Following the idea in \cite{HodgeLocus}, we construct a canonical completion $\overline{\mathbb{U}}$ of $\mathbb{U}$ to a normal analytic variety and with a proper morphism
$$\overline{\mathbb{U}}\xrightarrow{\overline\sigma} \mathcal{P}$$
which extends $\mathbb{U}\xrightarrow{\sigma} \mathcal{P}^{\sm}$. 
We study the geometry of this completion.
Specifically,
let $\widehat{\mathcal{P}}$ be the blowup of $\mathcal{P}$ along $\Sigma:=\mathcal{P}\setminus \mathcal{P}^{\sm}$ and $\widehat\Sigma$ be the proper transform of $\Sigma$ in $\widehat{\mathcal{P}}$. Denote by $\tilde{\mathcal{P}}$ the double cover of $\widehat{\mathcal{P}}$ branched along $\widehat\Sigma$. In fact, $\tilde{\mathcal{P}}$ is smooth and is the minimal resolution of the double cover of ${\mathcal{P}}$ branched along $\Sigma$. We have the following conclusion about the geometry of $\overline{\mathbb{U}}$.

\begin{theorem}\label{thm-geo}
Let $S$ be a complex projective surface and $\PP$ be a generic net over a Lefschetz pencil $\L$ on $S$.
 We have
$$\overline{\mathbb{U}} \cong S_{\widetilde{\mathcal{P}}}\times_{\widetilde{\mathcal{P}}}S_{\widetilde{\mathcal{P}}}\setminus\Delta,$$
where  $S_{\widetilde{\mathcal{P}}}=\widetilde{\mathcal{P}}\times_{\mathcal{P}^{\sm}}S_{{\mathcal{P}^{\sm}}}$ and $\Delta$ denotes the diagonal of the fiber product.
\end{theorem}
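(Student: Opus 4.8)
The plan is to compare both sides over the smooth locus $\mathcal{P}^{\sm}$, where each is an ordinary covering space, and then to identify the canonical completion $\overline{\mathbb{U}}$ with $S_{\widetilde{\mathcal{P}}}\times_{\widetilde{\mathcal{P}}}S_{\widetilde{\mathcal{P}}}\setminus\Delta$ by a normalisation argument together with a local computation over $\Sigma$. The conceptual point is that $\widetilde{\mathcal{P}}\to\mathcal{P}$ is exactly the base change that separates the two points of a twice–hyperplane section colliding along $\Sigma$: after it, the ``incomplete'' family of ordered pairs of distinct points of $S_{t,r}$ over $\mathcal{P}^{\sm}$ becomes the honest relative configuration space of $S_{\widetilde{\mathcal{P}}}\to\widetilde{\mathcal{P}}$, and $\Delta$ is excluded throughout because an elementary vanishing cycle is never $0$. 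Since everything in sight is normal and the asserted isomorphism is birational over $\mathcal{P}$, once $\mathcal{P}^{\sm}$ is handled the statement becomes local over the finitely many nodes, cusps, and general points of $\Sigma$.

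\emph{Step 1 (the smooth locus).} First I would show that the monodromy of the finite covering $S_{\mathcal{P}^{\sm}}\to\mathcal{P}^{\sm}$, acting on a fibre $S_{t,r}=\{P_0,\dots,P_{d-1}\}$, is the full symmetric group $\mathfrak{S}_d$: by genericity of the net a small loop around a general branch of $\Sigma$ transposes exactly the two points of $S_{t,r}$ coming together there (Picard--Lefschetz for the $0$--cycle), and transpositions of this form generate a transitive subgroup, hence all of $\mathfrak{S}_d$. Under the identification $H^0_{\van}(S_{t,r},\Z)\cong\{(n_i)\in\Z^{d}:\sum_i n_i=0\}$, on which the monodromy permutes coordinates, the orbit of the elementary vanishing cycle $\alpha=P_i-P_j$ is precisely the set of roots $\{P_a-P_b:a\neq b\}$, and $P_a-P_b\mapsto(P_a,P_b)$ is a monodromy--equivariant bijection of this orbit with the fibre of $(S_{\mathcal{P}^{\sm}}\times_{\mathcal{P}^{\sm}}S_{\mathcal{P}^{\sm}})\setminus\Delta$. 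Since $\mathbb{U}$ is by construction the connected covering of $\mathcal{P}^{\sm}$ attached to the stabiliser of $\alpha$ (equivalently, to this orbit), this yields a canonical isomorphism $\mathbb{U}\cong(S_{\mathcal{P}^{\sm}}\times_{\mathcal{P}^{\sm}}S_{\mathcal{P}^{\sm}})\setminus\Delta$ of coverings of $\mathcal{P}^{\sm}$; carrying out the same computation after the base change to $\widetilde{\mathcal{P}}$, which is étale over $\mathcal{P}^{\sm}$, one checks that it also describes the restriction of $S_{\widetilde{\mathcal{P}}}\times_{\widetilde{\mathcal{P}}}S_{\widetilde{\mathcal{P}}}\setminus\Delta$ there, so the two sides agree over $\mathcal{P}^{\sm}$.

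\emph{Step 2 (the completion; general points and nodes of $\Sigma$).} Following the construction in \cite{HodgeLocus}, $\overline{\mathbb{U}}$ may be realised as the normalisation of the closure of $\mathbb{U}$ inside the finite $\mathcal{P}$--scheme $S_{\mathcal{P}}\times_{\mathcal{P}}S_{\mathcal{P}}$, with the diagonal $S\hookrightarrow S\times_{\mathcal{P}}S$ deleted; in particular $\overline{\mathbb{U}}\to\mathcal{P}$ is proper. On the other side, $S_{\widetilde{\mathcal{P}}}$ is normal by construction and $\widetilde{\mathcal{P}}$ is smooth, so $S_{\widetilde{\mathcal{P}}}\times_{\widetilde{\mathcal{P}}}S_{\widetilde{\mathcal{P}}}$ is Cohen--Macaulay, finite and flat over $\widetilde{\mathcal{P}}$ and hence finite over $\mathcal{P}$; with $\Delta$ removed it is normal and proper over $\mathcal{P}$, and by Step~1 it restricts to $\mathbb{U}$ over $\mathcal{P}^{\sm}$. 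By uniqueness of the normalisation in a fixed function field over $\mathcal{P}$ it is then enough to match the deleted loci, and by normality this is local over $\Sigma$. At a general point of $\Sigma$ one uses the standard local model: $S_{\mathcal{P}}\to\mathcal{P}$ is $d-2$ unramified sheets together with one simple fold $\{z^2=\ell\}$, where $\{\ell=0\}=\Sigma$ locally; $\widetilde{\mathcal{P}}$ is precisely the base change on which $\ell$ acquires a square root $w$, so the fold pulls back to the two disjoint graphs $\{z=\pm w\}$ and $S_{\widetilde{\mathcal{P}}}\to\widetilde{\mathcal{P}}$ becomes unramified (with two of its sheets acquiring a common image in $S$ along the ramification locus). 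Running the identification of Step~1 through this local picture, one checks that the limits added in the normalisation which do not come from $\mathbb{U}$ are exactly the points of $\Delta$, so removing them recovers $\overline{\mathbb{U}}$; at a node of $\Sigma$ two such folds occur on independent local coordinates and one argues branch by branch.

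\emph{Main obstacle.} The genuinely delicate case is a cusp of $\Sigma$, where three points of $S_{t,r}$ collide in an $A_2$ fashion and the local fundamental group of $\mathcal{P}^{\sm}$ is the trefoil group rather than abelian; there the double cover $\widetilde{\mathcal{P}}$ (after the blow--up producing $\widehat{\mathcal{P}}$ and the smooth $\widehat\Sigma$) only kills the square--root part of the monodromy, and $S_{\widetilde{\mathcal{P}}}\to\widetilde{\mathcal{P}}$ is \emph{not} unramified over the exceptional curve. One has to compute the normalisation of the pulled--back $A_2$ cover explicitly — away from the exceptional curve it splits, just as in the generic case, into one unramified sheet plus a fold--type pair, while along the exceptional curve it is a cyclic triple cover — and then verify that, after forming the relative configuration space and deleting $\Delta$, the result coincides with the local model of $\overline{\mathbb{U}}$ produced by the topological Abel--Jacobi/tube description near a cusp. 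Reconciling these two completions at the cusps — the one coming from the normal--function side and the fibre--product--minus--diagonal coming from the $\widetilde{\mathcal{P}}$ side — is where the real work lies; the general points and nodes of $\Sigma$, and all of $\mathcal{P}^{\sm}$, are then essentially formal once Step~1 is in place.
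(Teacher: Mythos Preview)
Your overall architecture matches the paper's: identify $\mathbb{U}$ with the ordered configuration space over $\mathcal{P}^{\sm}$ via the stabilizer/covering--space argument, then extend by a local analysis over the nodes and cusps of $\Sigma$. Step~1 is essentially what the paper does (the paper phrases it as covering--space classification rather than full $\mathfrak{S}_d$--monodromy, but these are equivalent), and your normalisation--in--a--fixed--function--field framing of Step~2 is fine.

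Where you diverge from the paper is the local analysis at $\Sigma^{\sg}$, and here you are missing the paper's key shortcut. The paper does \emph{not} compute $S_{\widetilde{\mathcal{P}}}\to\widetilde{\mathcal{P}}$ explicitly at a cusp. Instead it works on the (singular) double cover of $\mathcal{P}$ branched along $\Sigma$ \emph{before} resolving: over $\mathcal{P}':=\mathcal{P}\setminus\Sigma^{\sg}$ this double cover makes the family of $0$--cycles unramified, so $\mathbb{G}':=\widetilde{S_{\mathcal{P}'}}\times_{\widetilde{\mathcal{P}'}}\widetilde{S_{\mathcal{P}'}}\setminus\Delta$ is an honest finite \'etale cover of $\widetilde{\mathcal{P}'}$ and is identified with $\mathbb{U}$ exactly as in your Step~1. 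At a node (resp.\ cusp) of $\Sigma$, the local model of the double cover is the cyclic quotient singularity $X_{2,1}=\{uw=v^{2}\}$ (resp.\ $X_{3,2}=\{uw=v^{3}\}$, after the coordinate change $u=x+z$, $w=x-z$, $v=y$ in $z^{2}=x^{2}-y^{3}$), whose punctured neighbourhood has $\pi_{1}=\Z/2$ (resp.\ $\Z/3$). Since these are cyclic of \emph{prime} order, every connected finite cover of the punctured neighbourhood is either trivial or the universal cover $\C^{2}\setminus\{0\}$; in either case one fills in a single point (acquiring the same quotient singularity or a smooth point), and then a single blow--up produces simultaneously the minimal resolution $\widetilde{\mathcal{P}}$ downstairs and $\overline{\mathbb{U}}$ upstairs. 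This handles nodes and cusps uniformly and purely topologically; your ``main obstacle'' (the explicit $A_{2}$ computation on $\widetilde{\mathcal{P}}$) simply does not arise.

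Two smaller remarks. First, your node case is not quite ``branch by branch'': before the blow--up the double cover already has the $A_{1}$ singularity $X_{2,1}$ there, and the same $\pi_{1}$ argument is what makes it work. Second, your properness claim for $S_{\widetilde{\mathcal{P}}}\times_{\widetilde{\mathcal{P}}}S_{\widetilde{\mathcal{P}}}\setminus\Delta\to\mathcal{P}$ needs $\Delta$ to be a connected component of the fibre product, i.e.\ $S_{\widetilde{\mathcal{P}}}\to\widetilde{\mathcal{P}}$ to be \'etale everywhere; this sits uneasily with your assertion that it is ``not unramified over the exceptional curve'' at a cusp. In the paper's approach this tension never appears, because one completes over the singular double cover first and only then resolves.
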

The geometry of the deformation space $\overline{\mathbb{U}}$ will help us to search the algebraic 0--cycles on $S_t$, which will trace out an algebraic 1--cycle dual to $\eta$ as $t$ moves over $\L$.
In fact, the closure of $\mu_\eta(\L^{\sm})$ in $\overline{\mathbb{U}^{(N)}}$ is an algebraic 1--cycle, which is desired.

\subsection*{Notations.}Throughout this paper, the integral (co)homology means the integral (co)homology \emph{modulo the torsion}, and we will use the following notations.
\begin{itemize}
\item $M$: a complex project manifold of dimension $m$;
\item $\O_M$: the projective space parametrizing all hyperplane sections of $M$. Sometimes, we use $\O$ for short if there is no confusing.
\item $\L_M$: a Lefschetz pencil of $M$ which is a projective line in $\O_M$. Sometimes, we use $\L$ for short if there is no confusing.
\item $S$: a complex project manifold of dimension $2n$. In particular, $S$ is a complex projective surface if $n=1$.
    \item $X$: a complex project manifold of dimension $2n-1$.  In particular, $X$ is a complex projective curve if $n=1$.
    \item $A_B$: the incidence variety defined by
    $$A_B:=\left\{(a,b)\in A\times B\,\vert \, b(a)=0\right\},$$
    where $B$ parametrizes the hyperplane sections of $A$. And the two natural projection denote by
    $\p_A: A_B \to A$ and $\p_B: A_B\to B$.
    \item $\P(V)$: the projectivization of $V$, \ie, the set of all $1$--dimensional vector subspaces in $V$, where $V$ is a complex vector space.
    \item $\P(V)^\vee:=\P(V^\vee)$, where $V^\vee:=\Hom_{\C}(V, \C)$ denotes the dual vector space of $V$.
\end{itemize}

\subsection*{Organization of this paper}
In section \ref{sec-pre}, we review the Hodge conjecture and Lefschetz's original proof for the case of complex projective surface.
In Section \ref{sec-top}, we first review the topological Abel--Jacobi mapping and the tube mapping, then we explain the connection between these two mappings. Furthermore, we will give the conjectures towards the strong tube theorem and Jacobi--type inversion theorem in higher dimensional cases inspired by Schnell's result \cite[Theorem 1]{Tube}.
We prove the strong tube theorem in Section \ref{sec-tube} and the Jacobi--type inversion theorem in Section \ref{sec-Jacobi} for curve case. In Section \ref{sec-geo}, we give a geometriy construction for the deformation space $\overline{\U}$ and prove Theorem \ref{thm-geo}.

\subsection*{Acknowledgments}
This work is part of  my Ph.D. thesis.
I am deeply grateful to my advisor Herb Clemens for introducing me this beautiful geometric construction and his invaluable intuition which have provided ideas for many proofs.
 I also would like to express my truly gratitude to my advisor Yuan-Pin Lee for his support and many useful advice. 
 Besides, I would like to thank Yilong Zhang for some useful communications.


%

\section{Preliminaries}\label{sec-pre}

In this section, we will establish some background.
The familiar reader can skip this section and use it as a reference.

Let $M$ be a complex projective manifold of dimension $m$. 
The Fubini-Study metric induced on $M$ is a K\"{a}hler metric, thus we have the Hodge Decomposition \cite{Voisin1}
\begin{equation*}
H^k(M, \C)=\bigoplus\limits_{p=0}^k H^{p, k-p}(M) , \qquad \overline{H^{p, q}}=H^{q, p}.
\end{equation*}

By a Hodge class of degree $2m$, we mean a  cohomology class  of type $(m,m)$.
The modern statement of the Hodge conjecture is as follows.
\begin{Hodge Conjecture}
Every  rational Hodge class of degree $2m$  on $M$ is algebraic, \ie, it is a linear combination with rational coefficients of the cohomology classes of complex  subvarieties of codimension $m$. 
\end{Hodge Conjecture}
Note that the Hodge conjecture can be reduced to \emph{primitive} Hodge classes of middle degree
\cite[Lecture 14]{Lewis}.
By a primitive $k$-th cohomology class, we mean that it is killed by $L^{n-k+1}$, where $L$ denotes the Lefschetz operator, \ie, represents cup product with the K\"{a}hler class on $M$. The primitive and vanishing (co)homologies are recalled here.

\subsection{Primitive and vanishing (co)homology}\label{s:prim-van}
By choosing a very ample divisor, we can assume that $M$ is nondegenerate    in $\P^{\overline{m}}$ for some positive integer $\overline{m}$.
Let $M_0$ be a smooth hyperplane section of $M$ in $\P^{\overline{m}}$.
 Denote by $i: M_0 \hookrightarrow M$ the including.
The Lefschetz Hyperplane theorem tells us that 

\begin{enumerate}
    \item
 $H_k(M_0,\Z) \xrightarrow{i_*} H_k(M, \Z)$ is an isomorphism for $k<m-1$ and is surjective for $k=m-1$;
 \item $H^k(M,\Z) \xrightarrow{i^*} H^k(M_0, \Z)$ is an isomorphism for $k<m-1$ and is injective for $k=m-1$.
\end{enumerate}
Moreover, by Poincar\'e Duality, we have
 $$H_k(M,\Z) \cong H_{2m-k}(M,\Z)^\vee\qquad \text{and}\qquad
 H^k(M,\Z) \cong H^{2m-k}(M,\Z)^\vee.$$ Therefore,
the $k$-th (co)homology of $M$ can be inferred from $M_0$ for all $k\neq m$, and the only piece of the $m$-th  (co)homology that cannot be inferred is the following: 
\begin{enumerate}
\item $H_m^{\prim}(M,\Z):=H_m(M,\Z)/i_*H_m(M_0,\Z)=\coker\left\{H_m(M_0,\Z) \xrightarrow{i_*} H_m(M, \Z)\right\}$,
\item $H^{m}_{\opname{prim}}(M,\Z):=\ker\left\{H^{m}(M,\Z) \xrightarrow{i^*} H^{m}(M_0, \Z)\right\} \cong
\ker\left\{H_{m}(M,\Z) \xrightarrow{i^*} H_{m-2}(M_0, \Z)\right\} $,
\end{enumerate}
which is called the $m$--th \emph{primitive} (co)homology of $M$.
Since the dual functor $\Hom(\cdot, \Z)$ is left exact,  we have
 $$H^m_{\prim}(M, \Z)\cong \Hom(H_m^{\prim}(M,\Z), \Z):=H_m^{\prim}(M,\Z)^\vee.$$
 Consequently, we only need to study one of them, \ie, either $H^m_{\prim}(M, \Z)$ or $H_m^{\prim}(M, \Z)$.

Similarly, we have $(m-1)$--th primitive (co)homology for $M_0$ whose certain subgroup called the $(m-1)$--th vanishing (co)homology group,
turns out to be very important. The $(m-1)$--th vanishing (co)homology group of $M_0$ is defined as follows:
 \begin{align*}
 H_{m-1}^{\van}(M_0,\Z)&=\ker\left\{H_{m-1}(M_0,\Z) \xrightarrow{i_*} H_{m-1}(M, \Z)\right\},\\
  H^{m-1}_{\van}(M_0,\Z)&=\ker\left\{H^{m-1}(M_0,\Z) \xrightarrow[\text{Gysin}]{i_*} H^{m+1}(M, \Z)\right\}\cong H_{m-1}^{\van}(M_0, \Z).
 \end{align*}
Furthermore, we have the following decompositions as orthogonal direct sums related to the intersection form \cite[Proposition 2.27]{Voisin2}:
 \begin{align}
 H^{m-1}(M_0,\Q)&=H_{\van}^{m-1}(M_0,\Q) \oplus i^*H^{m-1}(M, \Q), \label{eqn-summand-1}\\
  H^{m-1}_{\prim}(M_0,\Q)&=H^{m-1}_{\van}(M_0,\Q) \oplus i^*H^{m-1}_{\prim}(M, \Q) \label{eqn-summand-2}.
 \end{align}
 For hypersurfaces in projective space, the vanishing middle degree (co)homology group is the same as the primitive middle degree (co)homology group by the second decomposition in the above (\cf \cite[Remark 6.8 on p.163]{Voisin2}).
\begin{remark}
Note that the vanishing and primitive (co)homology are self-dual under the intersection pairing, we will exchange sometimes between homology and cohomology for convenience in this paper.
\end{remark}

With the help of the primitive (co)homology classes, the Hodge conjecture can be restated as follows.

\begin{Hodge Conjecture (reduced)}\label{redHC}
Let $S$ be a complex projective manifold of \emph{even} dimension $2n$, then every primitive Hodge class of degree $2n$ is algebraic.
\end{Hodge Conjecture (reduced)}


Up to now, the Hodge conjecture was only proved for a few special cases \cite{Lewis, Zucker-4folds}, for example, most abelian varieties, some hypersurfaces and the case of divisors, \ie, $n=1$. In particular, for $n=1$ case, the Hodge conjecture is true for all  \emph{integral} cohomology classes, which is known as the famous Lefschetz $(1,1)$-theorem.

Lefschetz originally proved the Lefschetz $(1,1)$-theorem  by taking a Lefschetz pencil of hyperplane sections and then using Poincar\'e normal function and the Abel--Jacobi mapping.
 We review these concepts in the following.

\subsection{Lefschetz pencils \cite[Chapter 2]{Voisin2}}\label{s:LefBlowup}
 Note that $M$ is nondegnerate in $\P^{\overline{m}}$, then $\O_M:=(\P^{\overline{m}})^\vee$ parametrizes all hyperplane sections of $M$ in $\P^{\overline{m}}$. Any projective line $\P^1$ in $\O_M$ parametrizes a pencil of hyperplane sections $\{M_t\}_{t\in \P^1}$  of  $M$, which is called a \emph{Lefschetz pencil} if it satisfies the following two conditions:
\begin{enumerate}
\item The base locus $B:=\bigcap_{t\in \P^1} M_t$ is smooth of codimension 2 in $M$.
\item Every hypersurface $M_t$ has at most one ordinary double point as singularity.
\end{enumerate}

On the other hand,
let  $Z\subseteq M$ be a complex submanifold of codimension $k$. Locally along $Z$, there exist holomorphic functions $f_1, \cdots, f_k$ with independent differentials such that $Z \cap U=\{ z \in U | f_i(z)=0, \, i=1, \cdots, k\}$, where $U$ is an open subset of $M$.  Set
\begin{equation*}
\tilde U_Z:=\left\{\left([X_1, \cdots, X_k], z\right)\in \P^{k-1}\times U \,|\,
\opname{rank}\left(\begin{array}{cccc} X_1&X_2&\cdots &X_k\\
f_1(z)&f_2(z)&\cdots &f_k(z)\end{array}\right)=1\right\},
\end{equation*}
which is the blowup of $U$ along $Z\cap U$.  We can glue $\tilde U_Z$ together for all open subsets $U\subseteq M$ to get the blowup of $M$ along $Z$, which we denote by $\opname{Bl}_ZM$.

Suppose $\{M_t\}_{t\in \L}$ is a Lefschetz pencil with $\L\subseteq \O_M$ being a projective line.  Locally, in an open subset $U$ of $M$,  there exist holomorphic functions $f$ and $g$ with independent differentials such that $M_t \cap U=\{ z \in U | t_1f(z)+t_2g(z)=0\}$, where $t=[t_1, t_2]\in \L$.  Note that
$B\cap U=\{ z \in U | -g(z)=f(z)=0\}$,  then
\begin{equation*}
\begin{aligned}
\tilde U_B:=&\left\{\left([t_1, t_2], z\right)\in \L\times U \,|\,
t_1f(z)+t_2g(z)=0\right\}\\
=&\left\{\left([t_1, t_2], z\right)\in \L\times U \,|\,
z\in M_{[t_1,t_2]}\right\}.
\end{aligned}
\end{equation*}
Glue $\tilde U_B$ together for all open subsets $U\subseteq M$,  we get the blowup $\opname{Bl}_B M$. Note that the incidence variety of $M$ along the Lefschetz pencil $\{M_t\}_{t\in \L}$ is defined by
$$M_\L:=\left\{\left([t_1, t_2], z\right)\in \L\times M \,|\,
z\in M_{[t_1,t_2]}\right\}.$$
Clearly, we have $M_\L=\opname{Bl}_B M$.

 \subsection{The Abel--Jacobi mapping}
Let $X$ be an irreducible smooth complex projective algebraic curve  of genus $g\geq 1$, and the associated analytic variety is a connected closed Riemann surface.
Let $\Omega$ be the bundle of holomorphic $1$-form on $X$. The Jacobian of $X$  is defined by
$$J(X)=\frac{H^0(X, \Omega)^\vee}{H_1(X, \Z)}
$$
which is a complex torus and isomorphic to $\C^g/\Z^{2g}$, and
$$H_1(X, \Z) \hookrightarrow H^0(X, \Omega)^\vee$$
is given by integrating forms over chains.

 Fix a base point $p_0\in X$, the Abel--Jacobi mapping is defined as the following:
 \begin{equation*}
\begin{aligned}
\a_X : X &\to J(X)\\ 
p  &\mapsto \int_{p_0}^p. 
\end{aligned}
\end{equation*}
For any positive integer $d$, denote by $\opname{Sym}^dX$ the $d$-th symmetric product of $X$ that is a compact complex manifold of dimension $d$. The Abel--Jacobi mapping $\a_X$ can be naturally extended to $\opname{Sym}^dX$, and we still denote by $\a_X$:
\begin{equation*}
\begin{aligned}
\a_X : \opname{Sym}^dX &\to J(X)  \\
\{p_i\}_{i=1}^d  &\mapsto \sum\limits_{i=1}^d\a_X({p_i})=\sum\limits_{i=1}^d\int_{p_0}^{p_i}.
\end{aligned}
\end{equation*}
There is a theorem of the surjectivity about $\a_X$  \cite[p. 235]{GH-Principles}.

\begin{Jacobi Inversion Theorem}
 If $d\geq g$, the Abel--Jacobi mapping $\a_X : \opname{Sym}^dX \to J(X)$ is surjective. In particular,
$\a_X : \opname{Sym}^gX \to J(X)$ is a birational map.
\end{Jacobi Inversion Theorem}

Denote by $\opname{Div}(X)$ the set of all divisors on $X$, which is an abelian group and  $\opname{Div}^0(X)$ is the subgroup  of all divisors of degree 0. The Abel-Jacobi mapping $\a_X$ can be linearly extended to $\Div(X)$, and we still denote by $\a_X$:
\begin{equation*}
\begin{aligned}
\a_X : \Div(X) &\to J(X)  \\
\sum\limits_{i=1}^m n_i p_i  &\mapsto \sum\limits_{i=1}^mn_i\a_X({p_i})=\sum\limits_{i=1}^mn_i\int_{p_0}^{p_i}.
\end{aligned}
\end{equation*}
Note that $\a_X(p-q)=\int_{p_0}^p-\int_{p_0}^q=\int_q^p$, then $\a_X|_{\Div^0(X)}$ can be defined without the choice of the base point, and we still denote by $\a_X$:
\begin{equation*}
\begin{aligned}
\a_X : \Div^0(X) &\to J(X)  \\
\sum\limits_{i=1}^m n_i (p_i - q_i)&\mapsto \sum\limits_{i=1}^mn_i\a_X({p_i}-q_i)=\sum\limits_{i=1}^mn_i\int_{q_i}^{p_i}.
\end{aligned}
\end{equation*}

More generally, we still have an Abel--Jacobi mapping if the dimension of $X$ is $2n-1$.

\subsection{Generalized Abel--Jacobi mapping and Normal functions}\label{NF-HC} 

Let $X$ be a  complex projective
manifold of dimension $2n-1$. Denote by $F^\bullet$ the Hodge filtration on $H^{2n-1}(X,\C)$, \ie, $$F^kH^{2n-1}(X, \C)=\bigoplus\limits_{p\geq k}H^{p,2n-1-p}(X).$$



The intermediate Jacobian $J(X)$ of $X$ is defined as
\begin{equation*}
J(X)=\frac{(F^nH^{2n-1}(X,\C))^\vee}{H_{2n-1}(X, \Z)}\cong\frac{H^{2n-1}(X,\C)}{F^nH^{2n-1}(X)\oplus H^{2n-1}(X, \Z)},
\end{equation*}
which is a complex torus. Clearly, the intermediate Jacobian is the usual Jacobian variety of a compact Riemann surface for $n=1$.
Let $\Theta^n(X)$ be the group of algebraic cycles of codimension $n$ on $X$, \ie, the group of combinations $\sum_{i}n_iZ_i$, where the $n_i$ are integers and only finitely many of them are nonzero, and each $Z_i\subseteq X$ is a closed, irreducible, and reduced algebraic subset of codimension $n$ of $X$.
Let $\Theta^n(X)_{\hom}$ be the subgroup of algebraic cycles in $\Theta^n(X)$ that are homologically equivalent to $0$. The generalized Abel--Jacobi mapping for $X$ is defined by Griffiths as  the  following \cite[Section 12.1.2 on p. 292]{Voisin1}:
\begin{equation}\label{Griffiths-AJ}
\begin{aligned}
\a_X : \Theta^n(X)_{\hom} \to J(X)\\
Z \mapsto a_X(Z)=\int_\Gamma,
\end{aligned}
\end{equation}
where $\Gamma \subseteq X$ is a differentiable chain of dimension $n$ such that $\partial \Gamma =Z$.

Let $\pi : \mathcal{X} \to B$ be a holomorphic projective fibration of relative dimension $2n-1$ between two complex manifolds, which induces the intermediate Jacobian fibration $J\to B$ with fiber $J(X_b)$ at $b\in B$. Denote by $\mathcal{J}$ the sheaf of holomorphic sections of $J\to B$.
A normal function means a holomorphic section $\nu$ of $J\to B$, \ie,
$\nu \in H^0(B, \mathcal{J})$.

Let $\Theta^n(X/B)_{\hom}$ be the subgroup of relative algebraic cycles in $\Theta^n(X/B)$ that are homologically equivalent to $0$ along the fiber, \ie, $Z_b:=\mathcal{Z}\cap X_b$ are homologous to $0$ for all $b\in B$.
For any $\mathcal{Z} \in \Theta^n(X/B)_{\hom}$, we have a horizontal normal function $\nu_{\mathcal{Z}}\in H^0(B, \mathcal{J})$ defined as the following \cite[Section 7.2.1 on p. 193]{Voisin2}:
\begin{equation*}
\begin{aligned}
\nu_{\mathcal{Z}}: &B\to J \\
&b \mapsto \a_{X_b}(Z_b) \in J(X_b).
\end{aligned}
\end{equation*}


Denote by $H_\Z^{2n-1}: = \mathrm{R}^{2n-1} \pi_* \Z$ the local system over $B$ and $\H^{2n-1}:=H_\Z^{2n-1}\otimes_\Z \mathcal{O}_{B} $.
By the Hodge decomposition, there is an integral variation of Hodge structures $(H^{2n-1}_\Z, F^\bullet \H^{2n-1})$ of weight ${2n-1}$, then $\mathcal{J}$ can also be obtained by the following short exact sequence
\begin{equation*}
0 \to H^{2n-1}_\Z \to \H^{2n-1}/F^n\H^{2n-1} \to \mathcal{J} \to 0,
\end{equation*}
which induces a long exact sequence in cohomology
\begin{equation*}
 H^0(B, \mathcal{J}) \xrightarrow{\alpha} H^1(B, H^{2n-1}_\Z)\xrightarrow{\beta} H^1(B, \H^{2n-1}/F^n\H^{2n-1}).
\end{equation*}
For any $v \in  H^0(B, \mathcal{J})$, since $\beta(\alpha(v))=0$ in
$$H^1(B, \H^{2n-1}/F^n\H^{2n-1})\cong \frac{H^1(B, \mathrm{R}^{2n-1}\pi_*\C)}{F^nH^1(B, \mathrm{R}^{2n-1}\pi_*\C)},$$
we obtain that  $\alpha(v)$ is a Hodge class \cite[Section 8.2.2 on p. 229]{Voisin2}.


Let $L^\bullet$ be the Leray filtration for $\pi: \mathcal{X} \to B$, 
 then we have
\begin{equation}
L^1H^{2n}(\mathcal{X}, \Z)=\ker\left\{H^{2n}(\mathcal{X}, \Z)\to H^{2n}(X_b, \Z)\right\},
\end{equation}
which gives us $[\mathcal{Z}]\in L^1H^{2n}(\mathcal{X}, \Z)$ \cite[Section 8.2.2 on p. 230]{Voisin2}.
Moreover, we have a morphism of Hodge structures
\begin{equation}
\eta: L^1H^{2n}(\mathcal{X}, \Z) \to H^1(B, H^{2n-1}_\Z).
\end{equation}
Therefore, we have $\alpha(\nu_\mathcal{Z})=\eta([\mathcal{Z}])$ 
\cite[Lemma 8.20 on p. 230]{Voisin2}.

 \medskip

Now, we are ready to outline Lefschetz's orignial proof for the Lefschetz $(1,1)$--Theorem.

\subsection{Lefschetz's orignial proof}
Let $S$ be a nondegenerate complex projective surface of degree $d$ in $\P^N$. Denote by $\O=:(\P^N)^\vee$ which parametrizes all hyperplane sections of $S$.
Pick a general line $\L$ in $\O$, 
then $\{S_t\}_{t\in \L}$ forms a Lefschetz pencil. 
Denote by
 \begin{align*}
 \L^{\sg}=\{t_1, \cdots, t_m\}\subseteq \L &\text{ the subset corresponding to all singular hyperplane sections,}\\
 \L^{\sm}\subseteq \L &\text{ the subset corresponding to all smooth hyperplane sections}.
 \end{align*}

 For any $t\in \L^{\sm}$, $S_t$ is a smooth complex projective curve. We have the Jacobian variety denoted by $J(S_t)$, which is an abelian variety (connected compact complex Lie group). In a natural way, the Jacobi bundle $\mathcal{J}:=\bigcup\limits_{t\in\L^{\sm}} J(S_t)$ forms a complex manifold. Any holomorphic section of the Jacobi bundle is called a \emph{(Poincar\'e) normal function}.

 For  any $t_i \in \L^{\sg}$, denote by $\tilde{S}_{t_i}$ the normalization of $S_{t_i}$. We can define a generalized intermediate Jacobian $J(S_{t_i})$ as an extension of $J(\tilde{S}_{t_i})$ by $\C^*$, \ie, we have the following short exact sequence  \cite[Section 16]{RatInt}:
 $$ 1\to \C^* \to J(S_{t_i})\to J(\tilde{S}_{t_i})\to 0.$$
We conclude that the extended Jacobi bundle $\tilde{\mathcal{J}}:=\bigcup_{t\in\L} J(S_t)$ is a complex manifold that forms an analytic fiber space $\tilde{\mathcal{J}} \rightarrow \L$ of complex Lie groups  \cite[Theorem 17.1]{RatInt}.  Any holomorphic section of the extended Jacobi bundle is called an extended (Poincar\'e) normal function.

   Denote by
 \begin{equation*}
S_\L:=\left\{(x, t) \in S\times \L \,\big |\,  t(x)=0 \right\}=\opname{Bl}_B S,
\end{equation*}
where $B$ denotes the base locus of this Lefschetz pencil (Clearly, $\dim B=0$).
We have the following diagram
$$\xymatrix{&S_{\L}^{\sm}
\ar@{^{(}->}[r]\ar[d]_{\p_{\L}^{\sm}}
&S_\L\ar[r] \ar[d]_{\p_{\L}} &S\\ &   \L^{\sm}\ar@{^{(}->}[r]&\L}.$$

Denote by $H_\Z^i: = \mathrm{R}^i \p_{\L*}^{\sm}\Z$ 
 the local system over $\L^{\sm}$ and $\H^i:=H_\Z^i\otimes_{\Z} \mathcal{O}_{\L^{\sm}} $.
By the Hodge decomposition, we have an integral variation of Hodge structures $(H^1_\Z, F^\cdot \H^1)$ of weight $1$ over $\L^{\sm}$. By the definition of the Jacobi bundle, we have the following short exact sequence of sheaves over $\L^{\sm}$
\begin{equation*}
0 \to H^1_\Z \to \H^1/F^1\H^1 \to \mathcal{J} \to 0,
\end{equation*}
which induces a long exact sequence in cohomology
\begin{equation*}
H^0(\L^{\sm}, \mathcal{J}) \xrightarrow{\delta} H^1(\L^{\sm}, H^1_\Z) \to H^0(\L^{\sm}, \H^1/F^1\H^1).
\end{equation*}
For any $\eta \in \opname{Hdg}^1(S, \Z) = H^2(S, \Z) \cap H^{1,1}$, we have
$$\eta \in \ker\left\{H^1(\L^{\sm}, H^1_\Z) \to H^0(\L^{\sm}, \H^1/F^1\H^1)\right\},$$ so there is a normal function $\nu_\eta \in H^0(\L^{\sm}, \mathcal{J})$ such that $\delta(\nu_\eta)=\eta$.

  Let $g$ be the genus of $S_t$, which is independent of $t\in \L^{\sm}$.  Fix a base point $b\in B$,  consider the Abel--Jacobi mapping for each $t\in \L^{\sm}$
\begin{equation*}
\begin{aligned}
\a_t : \opname{Sym}^gS_t &\to J(S_t)  \\
p_1+\cdots+p_g  &\mapsto \sum\limits_{i=1}^g\a_t({p_i})=\sum\limits_{i=1}^g\int_{b}^{p_i},
\end{aligned}
\end{equation*}
which is surjective by the Jacobi Inversion Theorem \cite[p. 235]{GH-Principles}.
 Hence,
there are $g$ points $p_1^t, \cdots, p_g^t\in S_t$ for all $t\in \L^{\sm}$ such that
$$\a_t(p_1^t+ \cdots + p_g^t)=\nu_\eta(t).$$
Let $t$ continuously move in $\L^{\sm}$, then each $p_i^t$ traces out a curve in $S$. Since $\tilde{\mathcal{J}}$ is a compact complex manifold, taking the closure of these curves, we obtain $g$  algebraic cycles $\gamma_i$ in $S$. Then $\gamma:=\sum\limits_{i=1}^g \gamma_i$ is the algebraic cycle traced out by $\nu_\eta$.
Finally, since both $\eta$ and $\gamma$ represent the cohomology class of the normal function $\nu_\eta$, then $\eta=[\gamma]$ is algebraic,
which is an outline of Lefschetz's original proof.


\section{Topological Abel--Jacobi mapping and Tube mapping}\label{sec-top}

Let $S$ be a nondegenerate complex projective manifold of dimension $2n$ in $\P^{N}$ and $X$ be a smooth hyperplane section of $S$ in $\P^N$. Let $\O_X^{\sm}$ be the set of all $(N-2)$--planes $H$ such that $X\cap H$ is smooth.   Pick any $t\in \O_X^{\sm}$ and denote by $X_t$ the corresponding smooth hyperplane section. We first recall the definition of the topological Abel--Jacobi mapping for $X_t$.

For any $\alpha \in H_{2n-2}^{\van}(X_{t}, \Z)$, let  $\gamma_\alpha$ be a  vanishing $(2n-2)$-cycle on $X_{t}$ representing $\alpha$, \ie, $[\gamma_\alpha]=\alpha$.  There is a $(2n-1)$-chain $\Gamma_\alpha$ on $X$ such that $\partial \Gamma_\alpha=\gamma_\alpha$.  However, $\int_{\Gamma_\alpha}$ is not well-defined in $\left(F^nH^{2n-1}(X; \mathbb{C})\right)^\vee$ since $\int_{\Gamma_\alpha} d\varphi =\int_{\gamma_\alpha} \varphi$ may not be zero for a differential $(2n-2)$-form $\varphi$ of type $(n,n-2)+(n+1,n-3)+\cdots +(2n-2,0)$ \cite[Proposition 7.5]{Voisin1},  so we have to subtract this part from $\int_{\Gamma_\alpha}$.
In order to do this, we consider
the closed differential $(2n-2)$-form $\omega_\alpha$ on $X_{t}$ that is dual to $\gamma_\alpha$ in the sense that for all closed $(2n-2)$-forms $\varphi$
\begin{equation}\label{eqn-AJ-int-1}
\int_{\Gamma_\alpha} d\varphi =\int_{\gamma_\alpha} \varphi =\int_{X_t} \omega_\alpha\wedge \varphi =\int_{X} i_*\omega_\alpha\wedge \varphi,
\end{equation}
where $i_*$ is the Gysin morphism induced by $i: X_{t} \hookrightarrow X$.
Note that the cohomology class of $i_*\omega_\alpha$ is $0$, by Hodge decomposition on forms,  we have
\begin{equation}\label{eqn-AJ-int-2}
i_*\omega_\alpha=dd^*Gi_*\omega_\alpha,
\end{equation}
where $G$ is the Green's operator. Apply (\ref{eqn-AJ-int-2}) to (\ref{eqn-AJ-int-1}) gives us
\begin{equation}\label{eqn-AJ-int-3}
\int_{\Gamma_\alpha} d\varphi  =\int_{X} dd^*Gi_*\omega_\alpha \wedge \varphi= \int_{X} d^*Gi_*\omega_\alpha \wedge d\varphi.
\end{equation}
Consequently, we  define the topological Abel--Jacobi mapping as
 \begin{align*}
 \opname{a}_{X,t}^{\opname{top}}: H_{2n-2}^{\van}(X_{t}, \Z) &\to J_{\prim}(X),\\
\alpha &\mapsto \int_{\Gamma_\alpha}  -\int_{X}d^*Gi_*\omega_\alpha \wedge,
 \end{align*}
where
 the primitive intermediate Jacobian of $X$ is defined as
\begin{align*}
J_{\prim}(X)=   \frac{\left(F^nH^{2n-1}_{\prim}(X; \mathbb{C})\right)^\vee}{H_{2n-1}^{\prim}(X; \mathbb{Z})}\cong \frac{H^{2n-1}_{\prim}(X; \mathbb{C})}{F^nH^{2n-1}_{\prim}(X; \mathbb{C})\oplus H^{2n-1}_{\prim}(X; \mathbb{Z})},
\end{align*}
which is a quotient torus of $J(X)$.
Note that $\Gamma_\alpha \in  H_{2n-1}(X, X_{t}; \Z)$ and $\omega_\alpha\in H^{2n-2}_{\van}(X_{t}, \Z)$, and  we have the following two short exact sequences:
\begin{align*}
&0\to H_{2n-1}^{\prim}(X, \Z)\to H_{2n-1}(X, X_{t}; \Z) \to H_{2n-2}^{\van}(X_{t}, \Z) \to 0,\\
&0 \to H^{2n-1}_{\prim}(X, \Z) \to H^{2n-1}(X-X_{t}, \Z) \to H^{2n-2}_{\van}(X_{t}, \Z) \to 0,
\end{align*}
then the different choice of $\Gamma_\alpha$ (resp. $\omega_\alpha$) induces an element in $H_{2n-1}^{\prim}(X, \Z)$ (resp. $H^{2n-1}_{\prim}(X, \Z)$) which is the lattice of $J_{\prim}(X)$. Hence, the topological Abel--Jacobi mapping $\opname{a}_{X,t}^{\opname{top}}$ is well-defined.

  In order to obtain the surjectivity property for the topological Abel--Jacobi mapping $ \opname{a}_{X,t}^{\opname{top}}$, we need to use all hyperplane sections of $X$. Therefore, consider the local system $\opname{R}^{2n-2}_{\van}\p_{X*}^{\sm}\Z$ over $\O_X^{\sm}$ which are
fiberwisely given by $H^{2n-2}_{\van}(X_{t}, \Z)$ and denote the total space by $ \big|\opname{R}^{2n-2}_{\van}\p_{X*}^{\sm}\Z\big| $,  
 then the topological Abel--Jacobi mapping becomes
 $$ \opname{a}_X^{\opname{top}}: \big|\opname{R}^{2n-2}_{\van}\p_{X*}^{\sm}\Z\big| \to J_{\prim}(X).$$
  X.\ Zhao proved that $\opname{a}_X^{\opname{top}}$ is real analytic \cite{TopAJ}. However, $ \opname{a}_X^{\opname{top}}$ is not holomorphic, which is the drawback of this mapping. In the future, we will explore a possible direction for overcoming this drawback.

Fix $t_0\in \O_X^{\sm}$ and  $\alpha_0\in H_{2n-2}^{\van}(X_{t_0}, \Z)$, we should consider the induced homomorphism on fundamental groups called the \emph{topological Abel--Jacobi homomorphism} in this paper,
   $$ \opname{a}_{X*}^{\opname{top}}: \pi_1\left(\big|\opname{R}^{2n-2}_{\van}\p_{X*}^{\sm}\Z\big|, (t_0, \alpha_0)\right) \to \pi_1(J_{\prim}(X))\cong  H_{2n-1}^{\prim}(X, \Z).$$

 Denote by $G = \pi_1(\O_X^{\sm}, t_0)$ and $V =H_{2n-2}^{\van}\left(X_{t_0}, \Z\right)$. The group $G$ can act on $V$ by the following monodromy representation:
 \begin{align*}
\rho_{\van} :& \pi_1(\O_X^{\sm}, t_0) \to \Aut\left(H_{2n-2}^{\van}\left(X_{t_0}, \Z\right)\right).
\end{align*}
For any $\alpha\in V$, denote by $G_\alpha$ the stabilizer of $\alpha$ in $G$. For any $g\in G$, denote by $V^g$ the set of all fixed points of $g$ in $V$.
 Note that $\big|\opname{R}^{2n-2}_{\van}\p_{X*}^{\sm}\Z\big|$  is an infinite-sheeted covering space of $\O_X^{\sm}$, then any element in the fundamental group
  $\pi_1\left(\big|\opname{R}^{2n-2}_{\van}\p_{X*}^{\sm}\Z\big|, (t_0, \alpha_0)\right)$ can be realized as an element $g\in G$ such that $g\alpha=\alpha$, \ie,
  $$\pi_1\left(\big|\opname{R}^{2n-2}_{\van}\p_{X*}^{\sm}\Z\big|, (t_0, \alpha_0)\right)= G_{\alpha_0}.$$ 
 Therefore, $ \opname{a}_{X*}^{\opname{top}}$ can be extended by addition to
 \begin{equation}\label{top-map}
  \opname{a}_{X*}^{\opname{top}}: \bigoplus\limits_{\alpha \in V}G_{\alpha} \to  H_{2n-1}^{\prim}(X, \Z).
  \end{equation}  
Let us give a geometric description of (\ref{top-map}). For any $\alpha\in V$ and $g\in G_\alpha$, let $\gamma_\alpha$ be a $(2n-2)$-cycle on $X_{t_0}$ representing $\alpha$,
 then we have $$[g\gamma_\alpha]=g\alpha=\alpha=[\gamma_\alpha],$$  thus there is a $(2n-1)$-chain $A$ in $X_{t_0}$ such that $\partial A=g\gamma_\alpha-\gamma_\alpha$.
By the short exact sequence 
$$ 0 \to H_{2n-1}^{\prim} (X, \Z)\to H_{2n-1}(X, X_{t_0}; \Z)  \xrightarrow{\partial} H_{2n-2}^{\van}(X_{t_0}, \Z)\to 0,$$
there is a relative $(2n-1)$-chain
\begin{equation}\label{rel-class}
\beta=[\gamma_\beta] \in H_{2n-1}(X, X_{t_0}; \Q)
\end{equation}
such that $\partial\gamma_\beta=\gamma_\alpha$.
The group $G$ can also act on $H_{2n-1}\left(X, X_{t_0}; \Z\right)$ by the following monodromy representation:
\begin{align*}
\rho_{\opname{rel}}: &\pi_1(\O_X^{\sm}, t_0) \to \Aut\left(H_{2n-1}\left(X, X_{t_0}; \Z\right)\right).
\end{align*}

For any $g\in G_{\alpha}$, we have  
$$\partial (g\gamma_\beta-\gamma_\beta)=g\partial\gamma_\beta-\partial\gamma_\beta =g\gamma_\alpha - \gamma_\alpha=\partial A,$$
thus $g\gamma_\beta-\gamma_\beta-A$ is a $(2n-1)$-cycle on $X$.  Since the monodromy action on $H_{2n-1}^{\prim} (X, \Z)$ is trivial,   $[g\gamma_\beta-\gamma_\beta]$ is independent of the choice of $\beta$. Taking the ambiguities in choosing $A$ and $\gamma_\beta$ into account, $[g\gamma_\beta-\gamma_\beta-A]$ is only defined up to elements of $H_{2n-1}(X_{t_0},\Z)$, so
we conclude that $\opname{a}_{X*}^{\opname{top}}(\alpha, g)= [g\gamma_\beta-\gamma_\beta-A] \in  H_{2n-1}^{\prim} (X, \Z)$.

Next, recall the tube mapping constructed by C.\ Schnell \cite{Tube}, which is defined as follows:
$$\tau: \bigoplus\limits_{g\in  G}V^g \to  H_{2n-1}^{\prim}(X,\Z). $$
For any $g  \in  G$ and $\alpha\in V^g$,  similarly,  let $\gamma_\alpha$ be a $(2n-2)$-cycle on $X_{t_0}$ representing $\alpha$, then we have $$[g\gamma_\alpha]=g\alpha=\alpha=[\gamma_\alpha],$$  thus there is a $(2n-1)$-chain $A$ in $X_{t_0}$ such that $\partial A=g\gamma_\alpha-\gamma_\alpha$. Transporting $\gamma_\alpha$ along $g$ gives us a $(2n-1)$-chain $B$ in $X$ with boundary
$$\partial B = g\gamma_\alpha-\gamma_\alpha=\partial A,$$
 then $B-A$ is a $(2n-1)$-cycle on $X$. 
Taking the ambiguities in choosing $A$ into account, $[B-A]$ is only defined up to elements of $H_{2n-1}(X_{t_0},\Z)$, so $\tau(g, \alpha)=[B-A] \in H_{2n-1}^{\prim}(X, \Z)$. We call $\tau(g, \alpha)$ the tube class determined by $g$ and $\alpha$.

Note that transporting $\gamma_\beta$ in (\ref{rel-class}) along $g$ gives us a $2n$-chain $\Gamma$ in $X$ with boundary
$$\partial \Gamma = (g\gamma_\beta-\gamma_\beta) - B= (g\gamma_\beta-\gamma_\beta -A) - (B- A),$$
 thus
$\a_{X*}^{\opname{top}}(\alpha, g)= \tau(g, \alpha).$
At the same time,  we have the following identity
$$\bigoplus\limits_{\alpha \in V }G_{\alpha} =\bigoplus\limits_{g\in  G} V^g=\bigoplus\limits_{g\in  \pi_1(\O_X^{\sm}, t_0)}\left\{\alpha \in H_{2n-2}^{\van}\left(X_{t_0}, \Z\right)\, | \, g\alpha=\alpha\right\},$$
so $ \opname{a}_{X*}^{\opname{top}}$ coincides with the tube mapping $\tau$ up to a sign.

\subsection{Conjectures} 
\label{s:elt-conj}

Schnell \cite[Theorem 1]{Tube} proved that $\tau$ is surjective over $\Q$  if $V \neq 0$, so is $ \opname{a}_{X*}^{\opname{top}}$.
Clearly, in order to generate $H_{2n-1}^{\prim}(X, \Q)$, we don't need to use the whole $V$.
  Naturally, we want to know the minimum number of  elementary vanishing cycles in $V$ to generate  $H_{2n-1}^{\prim}(X, \Q)$.
  However, this problem is unsolved even for $n=1$ case.
We conjecture that only one elementary vanishing cycle  
will be enough to generate $H_{2n-1}^{\van}(X, \Q)$, which is a summand of $H_{2n-1}^{\prim}(X, \Q)$ (see (\ref{eqn-summand-2})). 

If $\alpha$ is an elementary vanishing cycle, then $\gamma_\alpha$ is a vanishing sphere and $\gamma_\beta$ is the cone over $\gamma_\alpha$.  For any $g\in G_\alpha$,    $g\gamma_\alpha$ is also a vanishing sphere and $g\gamma_\beta$ is the  cone over $g\gamma_\alpha$.
We can paste $g\gamma_\beta$ with $\gamma_\beta$ along $A$
since $\partial A=g\gamma_\alpha-\gamma_\alpha$, which gives us a $(2n-1)$-vanishing sphere $g\gamma_\beta-\gamma_\beta-A$, so
$$\a_{X*}^{\opname{top}}(\alpha, g)=[g\gamma_\beta-\gamma_\beta-A]\in   H_{2n-1}^{\van}(X, \Z).$$
Using an elementary vanishing cycle, we conjecture a strong tube theorem compared with Schnell's tube theorem.
 \begin{conjecture}[Strong Tube Theorem]\label{Conj-old}
 Let $X$ be a smooth hyperplane section of a $2n$--dimensional complex projective manifold and $X_{t_0}$ be the smooth hyperplane section of $X$ over $t_0\in \O_X^{\sm}$. Let
 $\alpha\in H_{2n-2}^{\van}(X_{t_0}, \Z)$ be an elementary vanishing cycle. Denote by $G_\alpha$  the stabilizer of $\alpha$ in $\pi_1(\O_{X}^{\sm}, t_0)$.
 Then the image of the topological Abel--Jacobi homomorphism
 $$\a_{X*}^{\opname{top}}: G_{\alpha} \to H_{2n-1}^{\van}(X, \Z),$$
 is $m H_{2n-1}^{\van}(X, \Z)$ for some positive integer $m$.
 \end{conjecture}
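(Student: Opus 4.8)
\emph{Reductions and overall plan.} The idea is to pass to the tube--map picture, invoke Schnell's surjectivity theorem to obtain a finite--index statement, and then use the big monodromy of the Lefschetz pencil $\{S_u\}_{u\in\L}$ on $S$ to upgrade ``finite index'' to the normal form $mH_1^{\van}(S_t,\Z)$. By the comparison carried out in Section~\ref{sec-top}, on the subgroup $G_\alpha$ the topological Abel--Jacobi homomorphism $\a_{t*}$ agrees up to sign with Schnell's tube map $g\mapsto\tau(g,\alpha)$, and since $\alpha$ is an \emph{elementary} vanishing cycle the chains $g\gamma_\beta-\gamma_\beta-A$ computing $\tau(g,\alpha)$ are joins of vanishing spheres, so the image lands in $\Lambda:=H_1^{\van}(S_t,\Z)$, as recorded just before the statement. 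I also record two facts about the secondary pencil. First, because a generic net has general fibres, the monodromy $\pi_1(\mathcal{P}_t^{\sm},r)\to\mathfrak{S}_d$ on the $d$ points of $S_{t,r}$ is surjective (uniform position), and a small loop around a generic point of $\mathcal{P}_t\setminus\mathcal{P}_t^{\sm}$ acts by a transposition. Second, $\pi_1(\mathcal{P}^{\sm})\twoheadrightarrow\pi_1(\L^{\sm})$ because $\rho$ has connected fibres. Finally, $I_\alpha:=\im(\a_{t*}|_{G_\alpha})$ does not depend on the elementary vanishing cycle $\alpha$: $\mathfrak{S}_d$ is transitive on the cycles $P_i-P_j$, so there is $h\in\pi_1(\mathcal{P}_t^{\sm},r)$ with $h\alpha=\pm\alpha'$ and $G_{\alpha'}=hG_\alpha h^{-1}$; transporting the configuration $(\gamma_\alpha,\gamma_\beta,A)$ by $h$, and using that the monodromy of the secondary pencil acts trivially on $H_1(S_t,\Z)$ (the curve $S_t$ itself is fixed along that pencil), gives $\a_{t*}(hgh^{-1})=\a_{t*}(g)$, hence $I_{\alpha'}=I_\alpha=:I$.

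\emph{Finite index.} Next I would prove $I\otimes\Q=\Lambda\otimes\Q$. As $u$ varies over $\L^{\sm}$ the data $(\mathcal{P}_u,\alpha,G_\alpha)$ propagates in a flat family --- this is exactly the component $\mathbb{U}\subseteq|R^0_{\van}\sigma_*\Z|$ from the introduction --- so parallel transport along a loop in $\L^{\sm}$, lifted to $\mathcal{P}^{\sm}$, carries $I$ to itself via the Picard--Lefschetz action on $\Lambda$; thus $I\otimes\Q$ is invariant under the image $M$ of $\pi_1(\L^{\sm},t)$ in $\Aut(\Lambda)$. Since the monodromy of a Lefschetz pencil on a surface acts irreducibly on $H_1^{\van}(S_t,\Q)$, $I\otimes\Q$ is $0$ or all of $\Lambda\otimes\Q$; and it is nonzero because one can exhibit $g\in\ker(\pi_1(\mathcal{P}_t^{\sm})\to\mathfrak{S}_d)\subseteq G_\alpha$ dragging an endpoint of $\gamma_\alpha$ around a non-separating loop of $S_t$, so that $\a_{t*}(g)\neq 0$ (alternatively one runs a relative version of Schnell's Hodge--theoretic argument to see that the elementary tube classes alone already span $H_1^{\van}(S_t,\Q)$). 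Hence $I$ has finite index in $\Lambda$.

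\emph{Normal form.} Finally, $I\subseteq\Lambda$ is a finite--index subgroup invariant under $M$, and $M$ is Zariski--dense in $\mathrm{Sp}(\Lambda\otimes\Q)$ (Lefschetz--Deligne) and contains the Picard--Lefschetz transvections in the vanishing cycles of the pencil, which form a single $M$--orbit, span $\Lambda\otimes\Q$, and include pairs meeting transversally. A standard argument then shows that the only finite--index $M$--invariant subgroups of $\Lambda$ are the homothetic ones: writing $n$ for the exponent of $\Lambda/I$, one has $I\supseteq n\Lambda$, $M$ surjects onto $\mathrm{Sp}(\Lambda/n\Lambda)$ by strong approximation, and the only $\mathrm{Sp}(\Lambda/n\Lambda)$--invariant subgroups of $\Lambda/n\Lambda$ are the $d(\Lambda/n\Lambda)$ with $d\mid n$. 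Therefore $I=m\Lambda=mH_1^{\van}(S_t,\Z)$, which is the assertion.

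The step I expect to be the main obstacle is the non-vanishing/irreducibility input in the second paragraph. Schnell's theorem a priori needs all of $V$ and moreover produces a non-vanishing summand $\cong H^1(S,\Q)$, which the tube classes of elementary cycles cannot reach, so one must genuinely check that restricting to a \emph{single} elementary vanishing cycle --- and to $\pi_1(\mathcal{P}_t^{\sm})$ rather than $\pi_1(\O_{S_t}^{\sm})$ --- still produces something nonzero in $H_1^{\van}(S_t,\Q)$. Together with this, the rigorous verification that $I$ varies flatly with $u\in\L^{\sm}$ (so that the irreducible $\pi_1(\L^{\sm})$--monodromy may legitimately be applied), including the contribution of the ordinary cusps of $\mathcal{P}\setminus\mathcal{P}^{\sm}$ to $G_\alpha$ and to its image, is where the real work lies.
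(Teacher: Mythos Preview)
First, a framing remark: the statement you are attempting is stated in the paper as a \emph{conjecture} for general $n$; the paper proves only the curve case $n=1$ (Theorem~\ref{thm-tube}), and your proposal is likewise written entirely in that setting. So the comparison below is with Section~\ref{sec-tube}.

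Your overall architecture matches the paper's: show that $I=\im(\a_{t*}|_{G_\alpha})$ is (i) independent of the auxiliary choices, (ii) nonzero, and (iii) stable under the monodromy of the Lefschetz pencil $\L$ on $S$, then classify the stable $\Z$-sublattices of $H_1^{\van}(S_t,\Z)$. The differences lie in how (ii) and the classification step are carried out.

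For the classification of monodromy-stable sublattices, the paper does \emph{not} appeal to strong approximation. It gives a short direct argument (Theorem~\ref{stable}) using only the Picard--Lefschetz transvections: given a nonzero element $\sum(a_i\delta_i+b_i\gamma_i)$ in a stable submodule $V$, one applies the transvections in $\gamma_i,\delta_i$ to extract $a_i\gamma_i,a_i\delta_i,b_i\delta_i,b_i\gamma_i\in V$, hence $\gcd(a_i,b_i)\cdot H_1^{\van}\subseteq V$, and a descent on this $\gcd$ finishes. This sidesteps the step in your argument where you pass from Zariski-density of $M$ in $\mathrm{Sp}$ to surjectivity of $M\to\mathrm{Sp}(\Lambda/n\Lambda)$; that implication is not automatic (a Zariski-dense subgroup of $\mathrm{Sp}(\Lambda)$ need not surject onto every congruence quotient), so as written your ``normal form'' paragraph has a gap that the paper's elementary argument avoids.

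For the nonzero step, which you correctly flag as the crux, the paper's argument is geometric and quite different from either of your sketches. It realizes the stabilizer covering $\F$ as (a component of) $\tilde X\times_{\tilde\L}\tilde X\setminus\Delta$, where $\tilde\L\to\L$ is the double cover branched along $\L^{\sg}$, and identifies $\a_{t*}$ with $\theta_*$ for the map $\theta=\varphi^+-\varphi^-:\F\to J(X)$ built from the two projections to $X$ followed by Abel--Jacobi. At the basepoint $u_1$ sitting over a branch point, the ``non-colliding'' lifts $\varphi^i$ have vanishing differential (locally $z\mapsto z^2$), while $\varphi^\pm$ are immersions there; combined with $d(\varphi^++\varphi^-+\sum\varphi^i)\equiv 0$ (Abel's theorem: all fibres of $\p$ are linearly equivalent) this gives $d_{u_1}\theta=2\,d_{u_1}\varphi^+\neq 0$. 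The Albanese factorization $\theta=\chi\circ\a_\F$ then forces $\chi_*\neq 0$, hence $\theta_*\neq 0$. Your proposed alternative --- exhibiting $g\in\ker(\pi_1(\mathcal{P}_t^{\sm})\to\mathfrak{S}_d)$ whose tube is a non-separating loop --- is plausible but would still require identifying that kernel concretely and checking the tube class is what you claim; the paper's differential argument bypasses this entirely.

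For (i), the paper proves independence of \emph{both} the pencil and the elementary cycle in one stroke, by applying Ehresmann's theorem (with the relative version over the discriminant) to the incidence variety over the open locus of Lefschetz pencils, so that $\L\mapsto\im\theta_{\L*}$ is locally constant. Your conjugation-by-$\mathfrak{S}_d$ argument handles only the cycle for a fixed pencil, and your ``flat variation with $u\in\L^{\sm}$'' remark is the analogue of the paper's Ehresmann step.

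In summary: your plan is structurally the same as the paper's, but the two substantive steps are executed differently --- the paper replaces strong approximation by a direct Picard--Lefschetz/$\gcd$ computation, and replaces your tube-dragging heuristic by a differential-of-Abel--Jacobi argument via the Albanese. Both of the paper's arguments are more elementary and close exactly the gaps you yourself identified.
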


Since $\a_{X*}^{\opname{top}}$ is a group homomorphism and
$H_{2n-1}^{\van}(X, \Z)$ is an Abelian group,  $\a_{X*}^{\opname{top}}$ factors through the Abelianization ${G_{\alpha}}/{[G_{\alpha}, G_{\alpha}]}$ of $G_{\alpha}$. We still denote by
 $$\a_{X*}^{\opname{top}} : \frac{G_{\alpha}}{[G_{\alpha}, G_{\alpha}]} \to H_{2n-1}^{\van}(X, \Z).$$
Note that $H_1(G_{\alpha}, \Z)\cong  {G_{\alpha}}/{[G_{\alpha}, G_{\alpha}]}$, 
then
Conjecture \ref{Conj-old} can be restated as follows.

 \begin{conjecture}[Strong Tube Theorem]\label{Conj}
Let $X$ be a smooth hyperplane section of a $2n$--dimensional complex projective manifold and $X_{t_0}$ be the smooth hyperplane section of $X$ over $t_0\in \O_X^{\sm}$. Let
 $\alpha\in H_{2n-2}^{\van}(X_{t_0}, \Z)$ be an elementary vanishing cycle. Denote by $G_\alpha$  the stabilizer of $\alpha$ in $\pi_1(\O_{X}^{\sm}, t_0)$.
 Then the image of the topological Abel--Jacobi homomorphism 
   $$\a_{X*}^{\opname{top}}: H_1(G_{\alpha}, \Z) \to H_{2n-1}^{\van}(X, \Z)$$
  is $mH_{2n-1}^{\van}(S_t, \Z)$ for some positive integer $m$. Therefore,
   $$\a_{X*}^{\opname{top}}: H_1(G_{\alpha}, \Q) \to H_{2n-1}^{\van}(X, \Q)$$
  is surjective.
 \end{conjecture}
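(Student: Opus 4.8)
The plan is to use the identification $\a_{X*}^{\opname{top}}=\pm\tau$ recorded above and to split the assertion into a rational surjectivity statement and an integral refinement. Write $\Lambda:=H_{2n-1}^{\van}(X,\Z)$, a finitely generated free abelian group, and $L:=\im\a_{X*}^{\opname{top}}\subseteq\Lambda$. Since $\a_{X*}^{\opname{top}}$ is a group homomorphism, it suffices to show (i) $L$ has finite index in $\Lambda$, equivalently that $\a_{X*}^{\opname{top}}\otimes\Q\colon H_1(G_\alpha,\Q)\to\Lambda\otimes\Q$ is onto; and (ii) the finite index subgroup $L$ has the special shape $m\Lambda$. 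The final clause of the statement, rational surjectivity, is then immediate from (i).

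For (i) with $n=1$ I would argue geometrically. After re-embedding $S$ so that the base locus of $\PP_t$ on the curve $X=S_t$ is empty, the incidence variety $S_{t,\PP_t^{\sm}}\to\PP_t^{\sm}$ becomes, via its first projection, an open subcurve $X^{\circ}\subseteq X$; hence parallel transport of the $d$ points of $S_{t,r}$ along a loop in $\PP_t^{\sm}$ takes place inside $X$. For $\alpha=P_i-P_j$ and $g\in G_\alpha$ (which therefore fixes $P_i$ and $P_j$ separately) one checks that $\a_{X*}^{\opname{top}}(g)=[\ell_i]-[\ell_j]\in H_1(X,\Z)$, where $\ell_i,\ell_j$ are the loops traced in $X$ by $P_i,P_j$; concretely $[\ell_i]$ is the image of $g$ under $\mathrm{Stab}(P_i)\cong\pi_1(X^{\circ},P_i)\to H_1(X,\Z)$, a \emph{surjective} map, and similarly for $[\ell_j]$. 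Restricting to the finite index subgroup $G_\alpha$, each of $g\mapsto[\ell_i]$ and $g\mapsto[\ell_j]$ still has finite index image, and the combined homomorphism $g\mapsto([\ell_i],[\ell_j])$ into $H_1(X,\Z)^{2}$ has finite index image provided $\mathrm{pr}_1^{*}H^1(X)\cap\mathrm{pr}_2^{*}H^1(X)=0$ inside $H^1$ of the fibre-square curve $W=\{(x,y):\phi(x)=\phi(y),\,x\neq y\}$, where $\phi$ is the pencil map and $\mathrm{pr}_1,\mathrm{pr}_2$ its two projections. This vanishing is a genericity property of the generic net (it rules out the degenerate alternative $[\ell_i]=[\ell_j]$ for all $g\in G_\alpha$). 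Composing with the difference map $H_1(X,\Z)^{2}\to H_1(X,\Z)$ then shows $L$ has finite index, and the hypothesis that $\alpha$ is elementary places $L$ inside $H_1^{\van}(X,\Z)$. For general $n$ one replaces "loops traced by the two points" by Schnell's cone/thimble construction dragged around loops in $G_\alpha$, and deduces the statement from the fact that the monodromy orbit of one elementary vanishing cycle already spans $H_{2n-2}^{\van}(X_{t_0},\Q)$.

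For (ii) I would run the construction in the family over the Lefschetz pencil $\L$ on $S$ of which $X=S_t$ is a member. The data of a generic net vary algebraically with $t\in\L^{\sm}$, so the groups $L_t=\im\a_{t*}^{\opname{top}}\subseteq H_{2n-1}^{\van}(S_t,\Z)$ form a sub-local system, and therefore $L=L_{t}$ is invariant under the monodromy representation $\rho\colon\pi_1(\L^{\sm})\to\Aut(\Lambda)$. The monodromy group of a generic Lefschetz pencil is large: it is generated by the Picard--Lefschetz transvections in the vanishing cycles and, for a general $S$, acts irreducibly on $\Lambda\otimes\F_p$ for every prime $p$. A finite index subgroup of $\Lambda$ invariant under such a group must be $m\Lambda$: $\rho$-invariance gives $c_\delta\,\delta\in L$ for every (primitive) vanishing cycle $\delta$, with $c_\delta$ bounded by $[\Lambda:L]$; the $\delta$ lie in a single $\rho$-orbit, so the $c_\delta$ coincide with a common value $c$; the $\delta$ generate $\Lambda$, so $c\Lambda\subseteq L\subseteq\Lambda$; and $\F_p$-irreducibility for each $p\mid[\Lambda:L]$ upgrades this to an equality of all elementary divisors, \ie $L=m\Lambda$. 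Combined with (i) this proves the theorem.

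The main obstacle is step (i) for $n>1$: once $\dim X_{t_0}>0$, the incidence variety is the blow-up of $X$ along a positive-dimensional base locus rather than an open subset, so the transparent "difference of traced loops" description is unavailable and one is reduced to controlling the tube classes of Lefschetz thimbles transported around loops in $G_\alpha$; proving that a \emph{single} elementary vanishing cycle suffices seems to require finer information about the monodromy invariants of elementary vanishing cycles than is presently available, which is why only the curve case ($n=1$, Theorem~\ref{thm-tube}) is established unconditionally. Even for $n=1$ the genuinely delicate point is not that each of $[\ell_i]$ and $[\ell_j]$ has finite index image but that their difference does, equivalently the non-degeneracy $\mathrm{pr}_1^{*}H^1(X)\cap\mathrm{pr}_2^{*}H^1(X)=0$, and this is exactly where the precise geometry of a generic net enters.
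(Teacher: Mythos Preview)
Your two-step scaffold (nontriviality of the image $L$, then monodromy-invariance forcing the shape $m\Lambda$) matches the paper's structure, and your step~(ii) is essentially the paper's Lemma~\ref{lem-top-2}. However, the appeal to $\F_p$-irreducibility of the monodromy is both unproven and unnecessary: the paper's direct Picard--Lefschetz/gcd argument shows that \emph{every} nonzero stable sublattice of $H_{2n-1}^{\van}(X,\Z)$ is already $m\Lambda$ for some $m>0$, with no mod-$p$ hypothesis and no a priori finite-index assumption.

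The genuine gap is in step~(i). You aim for \emph{finite index} and reduce to the vanishing $\mathrm{pr}_1^{*}H^1(X)\cap\mathrm{pr}_2^{*}H^1(X)=0$ in $H^1(W)$, which you call ``a genericity property of the generic net'' but never prove; you have traded the problem for one at least as hard. More importantly, finite index is more than is needed. Once step~(ii) is stated as above, step~(i) need only show $L\neq0$, and the paper's proof of this is completely different from your outline. The paper identifies $H_1(G_\alpha,\Z)$ with $H_1(\F,\Z)$ for a compact curve $\F$ (a component of the completed off-diagonal fibre square), so that $\a_{X*}^{\opname{top}}$ is induced by a holomorphic map $\theta\colon\F\to J(X)$. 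One then computes $d_{u_1}\theta\neq0$ at the point $u_1$ sitting over a node of the discriminant: by Abel's theorem the sum $\varphi^{+}+\varphi^{-}+\sum_i\varphi^{i}$ of the Abel--Jacobi images of all $d$ fibre points is constant, hence has zero differential; at $u_1$ the $\varphi^{i}$ are ramified of index $2$, so $d_{u_1}\varphi^{i}=0$, whence $d_{u_1}\varphi^{+}=-d_{u_1}\varphi^{-}$ and $d_{u_1}\theta=2\,d_{u_1}\varphi^{+}\neq0$. Factoring $\theta$ through the Albanese $J(\F)\xrightarrow{\chi}J(X)$ with $\chi$ affine-linear, the nonvanishing of $d\theta$ forces the linear part of $\chi$ to be nonzero, and that linear part is exactly $\theta_*$ on $H_1$. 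Finally, your assertion that $L$ is monodromy-invariant because ``the data vary algebraically'' is looser than the paper's argument, which establishes independence of the pencil and of the chosen elementary vanishing cycle via Ehresmann's theorem applied over the open set of Lefschetz pencils, so that the discrete invariant $L$ is locally constant, hence constant.
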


Furthermore, denote by $(\G_X^{\sm}, u)$ the connected covering space of $(\O^{\sm}_X, t_0)$ corresponding the stabilizer $G_{\alpha}=\pi_1\left(\big|\opname{R}^{2n-2}_{\van}\p_{X*}^{\sm}\Z\big|, (t_0, \alpha)\right)$.
By the classification theorem of covering spaces, $\G_X^{\sm}$ is exactly the connected component of $\big|\opname{R}^{2n-2}_{\van}\p_{X*}^{\sm}\Z\big|$ containing $(t_0, \alpha)$.
Denote by $\G_X$ the desingularization of the compactification of $\G_X^{\sm}$.
By composing the topological Abel--Jacobi map with the projection $J_{\prim}(X) \to J_{\van}(X)$, we get
$$\opname{a}_X^{\opname{top}}: \big|\opname{R}^{2n-2}_{\van}\p_{X*}^{\sm}\Z\big|  \to J_{\van}(X).$$
Naturally, we conjecture the Jacobi--type Theorem that several copies of $\G_X$ will fill in the vanishing intermediate Jacobian $J_{\van}(X)$.
\begin{conjecture}[Jacobi--type Theorem]\label{Conj-AJ}
Let $X$ be a smooth hyperplane section of a $2n$--dimensional complex projective manifold.
There is some positive integer $k$ such that the following topological Abel--Jacobi mapping
$$\a_X^{\opname{top}} : \G_X^{(k)}:=\G_X \times_{\O_X} \G_X\times_{\O_X} \cdots \times_{\O_X}\G_X \to J_{\van}(X)$$
is surjective for all $k\geq K$.
\end{conjecture}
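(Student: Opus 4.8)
We may invoke Conjecture \ref{Conj}. Fix $X$ and write $J:=J_{\van}(X)$, a compact complex torus whose lattice $\pi_1(J)$ is a finite--index subgroup of $H_{2n-1}^{\van}(X,\Z)$; set $Z:=\a_X^{\opname{top}}(\G_X^{\sm})\subseteq J$. Since $\a_X^{\opname{top}}$ is real analytic (X.\ Zhao) and $\G_X^{\sm}$ is smooth quasi--projective, $Z$ is a connected subanalytic set, and --- extending $\a_X^{\opname{top}}$ across the boundary of the compact $\G_X$ by the normal--function completion used for Theorem \ref{thm-geo} --- we may take it compact; it then has a well--defined real dimension. Because $J_{\van}(X)$ is independent of the auxiliary hyperplane section, the image of $\a_X^{\opname{top}}$ on $\G_X^{(k)}$ is controlled by the $k$--fold sumset $Z^{[k]}:=Z+\cdots+Z\subseteq J$, and the theorem asserts $Z^{[k]}=J$ for all $k\ge K$. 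The plan has three steps: (i) use the Strong Tube Theorem to see $Z$ generates $J$ as a topological group; (ii) use a Picard--Lefschetz symmetry to produce an increasing chain of sumsets with dense union; (iii) upgrade density to equality by a dimension argument.

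For (i): by Conjecture \ref{Conj} the image of $\a_{X*}^{\opname{top}}\colon\pi_1(\G_X^{\sm})=G_\alpha\to\pi_1(J)$ is of finite index, hence $\langle Z\rangle$ is dense in $J$, since a continuous map from a connected space to a torus which is finite--index on $\pi_1$ cannot have image in a coset of a proper closed subgroup. For (ii): the fibres $X_t$, $t\in\O_X^{\sm}$, have \emph{even} complex dimension $2n-2$, so the local monodromy around each node sends a vanishing cycle $\delta$ to $-\delta$; thus $-\alpha_0$ is monodromy--equivalent to $\alpha_0$, the point $(t_0,-\alpha_0)$ lies on the same covering $\G_X^{\sm}$, and, $\a_X^{\opname{top}}$ being odd in the cycle variable, $Z=-Z$. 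Then $Z^{[2]}=Z-Z\ni 0$, so $Z^{[2]}\subseteq Z^{[4]}\subseteq\cdots$; and since $\langle Z-Z\rangle\supseteq 2\langle Z\rangle$ is still dense, the closure of the semigroup $\bigcup_k Z^{[2k]}$ is a closed subsemigroup of the compact group $J$ containing a dense subgroup, hence equals $J$. So $\bigcup_k Z^{[2k]}$ is dense in $J$.

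For (iii): by compactness it suffices to produce one $Z^{[2k_0]}$ with nonempty interior --- then finitely many translates $g_i+Z^{[2k_0]}$ with $g_i\in\bigcup_k Z^{[2k]}$ cover $J$, and monotonicity of the chain forces $Z^{[2k]}=J$ for all large $k$ (odd sumsets following with one extra summand). Let $\mathfrak h\subseteq\operatorname{Lie}(J)$ be the span of the translated tangent spaces of $Z$ at its regular points. Since the tangent space of a sumset at a regular point is the sum of those of its summands, $\dim Z^{[k]}\le\dim\mathfrak h$ for every $k$; conversely, if $\mathfrak h=\operatorname{Lie}(J)$, then choosing finitely many regular points of $Z$ whose tangent directions span $\mathfrak h$ makes the addition map a submersion near their sum, so $Z^{[k]}$ has interior for that $k$. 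Thus the whole problem reduces to the single assertion $\mathfrak h=\operatorname{Lie}(J)$, i.e.\ that the differential of $\a_X^{\opname{top}}$ along $\G_X^{\sm}$ has full rank.

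The \textbf{main obstacle} is precisely establishing $\mathfrak h=\operatorname{Lie}(J)$, and here the non--holomorphicity of $\a_X^{\opname{top}}$ bites. When $n=1$ the topological Abel--Jacobi map coincides with the ordinary holomorphic one, $Z$ is a complex--analytic set, $\mathfrak h$ is a \emph{complex} subspace, and a complex subspace with $\exp\mathfrak h$ dense in $J$ must equal $\operatorname{Lie}(J)$ --- so the Strong Tube Theorem already delivers $\mathfrak h=\operatorname{Lie}(J)$, which is exactly how Theorem \ref{thm-Jacobi} is proved. For $n>1$, $\a_X^{\opname{top}}$ is only real analytic, $\mathfrak h$ is merely a real subspace, and density of $\exp\mathfrak h$ no longer forces fullness; proving $\mathfrak h=\operatorname{Lie}(J)$ then requires a genuine infinitesimal computation. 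I would attempt it by differentiating the tube construction, expressing $d\a_X^{\opname{top}}$ through cup products with Kodaira--Spencer classes of $\{X_t\}_{t\in\O_X^{\sm}}$ acting on the variation of Hodge structure $R^{2n-2}_{\van}\p_{X*}^{\sm}\C$, and invoking Macaulay / symmetrizer--lemma type surjectivity of these cup products (in the spirit of Nori and Voisin) to force the image of $d\a_X^{\opname{top}}$ to be all of $\operatorname{Lie}(J_{\van}(X))$. Carrying this out --- and reconciling it uniformly with the subanalytic sumset argument --- is the real work, and the reason \ref{Conj-AJ} remains only a conjecture.
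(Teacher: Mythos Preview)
The statement in question is a \emph{conjecture} in the paper; there is no proof of it to compare against. Your proposal is, appropriately, not a proof but a conditional strategy (assuming Conjecture~\ref{Conj}) together with an honest identification of the obstruction $\mathfrak h=\operatorname{Lie}(J)$, and on that level it is reasonable. Two points deserve correction, however.

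First, your description of ``exactly how Theorem~\ref{thm-Jacobi} is proved'' is inaccurate. The paper does \emph{not} deduce the $n=1$ case from the Strong Tube Theorem via a density--plus--complex--subspace argument. Section~\ref{sec-Jacobi} instead carries out a direct rank computation: using the nonvanishing $d_{s_i}\theta_i\neq 0$ established in Section~\ref{ss:diff}, it writes down the Jacobian matrix of $\vartheta=\sum_{i=1}^e\theta_i$ at the tuple $s=(s_i)$ of ramification points, observes that its rank equals that of the Brill--Noether--type matrix $\bigl(\omega_j(x_i)/dz_i\bigr)$, and shows the latter has rank $g$ because a nontrivial linear combination $\sum a_j\omega_j$ vanishing at all $x_1,\dots,x_e$ would lie in $H^0(X,\Omega(-\sum x_i))=0$ (since $\deg(K-\sum x_i)=2g-2-e<0$). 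Surjectivity then follows from the proper--mapping argument of Proposition~\ref{prop}. So the paper's $n=1$ input is infinitesimal and holomorphic from the start, not routed through $\pi_1$.

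Second, even for $n=1$ your step (iii) contains a gap: the implication ``$\mathfrak h$ complex and $\exp\mathfrak h$ dense in $J$ $\Rightarrow$ $\mathfrak h=\operatorname{Lie}(J)$'' is false for complex tori in general (a generic complex line in $\C^2$ has dense image in a simple abelian surface). What the Strong Tube Theorem gives you is that $Z$ is not contained in any coset of a proper \emph{closed} subgroup; it does not preclude $Z$ sitting inside a translate of a dense immersed complex subgroup $\exp\mathfrak h$ with $\mathfrak h$ proper. Thus even granting Conjecture~\ref{Conj}, your route to $\mathfrak h=\operatorname{Lie}(J)$ does not close, and one is pushed back to an explicit differential computation of the kind the paper performs for $n=1$ --- which is consistent with your own final paragraph, but worth stating sharply.
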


\section{Strong tube theorem for complex project curves}
\label{sec-tube}
Let $S$ be a nondegenerate irreducible complex projective surface of degree $d$ in a projective space $\P^N$ and $X$ be a smooth hyperplane section of $S$.
Pick a general pencil $\L$ of hyperplane sections of $X$, which forms a Lefschetz pencil \cite[Corollary 2.10]{Voisin2}. Denote by $\L^{\sm}$ the subset of $\L$ parametrizing all smooth hyperplane sections. Fix a point $t_0\in \L^{\sm}$, denote by $X_{t_0}$ the fiber over $t_0$,  then we have a monodromy representation:
$$\rho_{X}: \pi_1(\L^{\sm}, t_0) \to \Aut(H_0^{\van}(X_{t_0}, \Z)).$$
Pick an elementary vanishing cycle $\alpha\in H_0^{\van}(X_{t_0}, \Z)$  corresponding to $t_1\in \L^{\sg}:=\L\setminus \L^{\sm}$, \ie, there are two distinct points $x_1^+, x_1^- \in X_{t_0}$ such that  $\alpha=[x_1^+-x_1^-]$ and $x_1^+, x_1^-$ come together when $t_0$ moves to $t_1$ under the monodromy action $\rho_X$.
Let $G_{\alpha}$ be the stabilizer of $\alpha$ in $\pi_1(\L^{\sm},  t_0)$ under the action of $\rho_{X}$.
 Then we have the topological Abel--Jacobi homomorphism
 $$\a_{X*}^{\opname{top}}: H_1(G_{\alpha},\Z) \to H_{1}^{\van}(X, \Z).$$
 In this section, we will prove the Strong Tube Theorem \ref{thm-tube} for $X$, \ie,  $\Im \a_{X*}^{\opname{top}}= mH_{1}^{\van}(X, \Z)$ for some nonzero integer $m$ and then $\a_{X*}^{\opname{top}}$ is surjective over $\Q$,
equivalently,  all 1-cycles in $X$ traced out by
 $\gamma\in G_\alpha$ generate  $H_1^{\van}(X, \Q)$.
We will first prove the following conclusion about $\a_{X*}^{\opname{top}} $.
 \begin{lemma}\label{lem-top-1}
 Let $X$ be a nondegenerate complex projective curve in $\P^{N-1}$. Let $\L$ be a Lefscheta pencil of $X$ and $\alpha$ be an elementary vanishing cycle on a smooth hyperplane section over $\L$.
 The image of the topological Abel--Jacobi homomorphism
 $$\a_{X*}^{\opname{top}} : H_1(G_{\alpha}, \Z) \to H_{1}(X, \Z)$$
 is nonzero and independent of the choice of the Lefschetz pencil $\L$ and the elementary vanishing cycle $\alpha$.
 \end{lemma}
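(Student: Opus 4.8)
The plan is to reformulate $\a_{X*}^{\opname{top}}$ for the curve $X$ in purely algebro-geometric terms and then to prove non-vanishing by a correspondence computation. Since $X$ is a smooth curve, a Lefschetz pencil $\L$ of hyperplane sections of $X$ has empty base locus, so the incidence variety is $X_\L\cong X$ and carries a degree-$d$ morphism $r\colon X\to\L\cong\P^1$ that is simply ramified exactly over $\L^{\sg}$. The monodromy $\pi_1(\L^{\sm},t_0)\to\Aut\!\big(H_0^{\van}(X_{t_0},\Z)\big)$ is the permutation action on the $d$ sheets of $r$; it is transitive (as $X$ is connected) and generated by transpositions, hence equals $S_d$, and in particular is $2$-transitive when $d\ge 3$. (If $g_X=0$, i.e. $d\le 2$, then $H_1(X,\Z)=0$ and there is nothing to prove, so assume $g_X\ge 1$.) Writing $\alpha=[x^+-x^-]$, an elementary computation in the free abelian group on the fibre shows that $g$ stabilises $\alpha$ iff $g$ fixes the \emph{ordered} pair $(x^+,x^-)$; thus $G_\alpha=\pi_1(Y,(x^+,x^-))$, where $Y:=\big(X_\L^{\sm}\times_{\L^{\sm}}X_\L^{\sm}\big)\setminus\Delta$ is connected (by $2$-transitivity) and is a Zariski-open subset of the smooth projective curve $\overline Y$, the normalisation of the closure of $X\times_\L X\setminus\Delta$ in $X\times X$. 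Identifying $H_1(G_\alpha,\Z)=H_1(Y,\Z)$, the description of $\a_{X*}^{\opname{top}}$ recalled in Section~\ref{sec-top} becomes
\[
\a_{X*}^{\opname{top}}=(\phi^+-\phi^-)_*\colon H_1(Y,\Z)\longrightarrow H_1(X,\Z),
\]
where $\phi^+,\phi^-\colon Y\to X$ are the two projections; for a loop $g\in G_\alpha$, $\phi^\pm_*(g)$ is the $1$-cycle traced out in $X$ by the point $x^\pm$.

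For non-vanishing, suppose the image were $0$. Since $\phi^\pm$ extend to proper maps $\overline\phi^\pm\colon\overline Y\to X$ and $H_1(Y,\Q)\twoheadrightarrow H_1(\overline Y,\Q)$, this forces $\overline\phi^+_*=\overline\phi^-_*$ on $H_1(\overline Y,\Q)\cong H^1(\overline Y,\Q)$. Composing on the right with $(\overline\phi^+)^*$ and using the projection formula $\overline\phi^+_*\,(\overline\phi^+)^*=\deg(\overline\phi^+)\cdot\mathrm{id}=(d-1)\,\mathrm{id}$, the correspondence $[\overline Y]_*=\overline\phi^-_*\,(\overline\phi^+)^*$ would act on $H^1(X,\Q)$ as $(d-1)\,\mathrm{id}$. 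On the other hand $X\times_\L X=\Delta+\overline Y$ as cycles — both reduced, by transversality away from the ramification of $r$ and by the two smooth branches $\{u=v\}\cup\{u=-v\}$ at each ramification point — and $[X\times_\L X]=(r\times r)^*[\Delta_{\P^1}]=d\big([\mathrm{pt}]\times[X]+[X]\times[\mathrm{pt}]\big)$, which has no $H^1\otimes H^1$ component and whose summands act as $0$ on $H^1(X,\Q)$ for degree reasons; hence $[X\times_\L X]_*=0$, so $[\overline Y]_*=-[\Delta]_*=-\mathrm{id}$ on $H^1(X,\Q)$. Comparing gives $(d-1)\,\mathrm{id}=-\mathrm{id}$, i.e. $d\cdot\mathrm{id}=0$ on $H^1(X,\Q)$, forcing $H^1(X,\Q)=0$ — impossible since $g_X\ge 1$. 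Therefore $\Im\a_{X*}^{\opname{top}}\neq 0$.

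For independence of the choices, the key observation is that the ambient curve $X$ does not vary in the family $X_\L^{\sm}\to\L^{\sm}$, so the monodromy action of $\pi_1(\L^{\sm},t_0)$ on $H_1(X,\Z)$ is trivial. This immediately gives independence of the base point $t_0$, and of $\alpha$: for a fixed $\L$ any two elementary vanishing cycles are conjugate under $\pi_1(\L^{\sm},t_0)$, and conjugating carries $G_\alpha$ and all the resulting tube classes to their $\pi_1(\L^{\sm})$-translates, which act trivially on $H_1(X,\Z)$. For independence of $\L$: the Lefschetz pencils of $X$ form a dense Zariski-open subset of the Grassmannian of lines in $(\P^{N-1})^\vee$, hence a connected space; deforming $\L$ within it deforms the pair $\big(X,\,X_\L^{\sm}\to\L^{\sm}\big)$, and hence $Y$, $\phi^\pm$ and $G_\alpha$, isotopically, while the induced self-homeomorphism of $X$ is isotopic to $\mathrm{id}_X$ and so acts trivially on $H_1(X,\Z)$ and carries $\Im\a_{X*}^{\opname{top}}$ for $\L$ onto that for the deformed pencil. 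Thus $\Im\a_{X*}^{\opname{top}}$ is locally constant, hence constant, on the connected parameter space.

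The first and third steps are essentially formal bookkeeping. The real content, and the step I expect to be the main obstacle, is the non-vanishing: within it the delicate point is the cycle-theoretic identity $X\times_\L X=\Delta+\overline Y$ with $\Delta$ of multiplicity \emph{one} (which is precisely where simple ramification of a Lefschetz pencil enters) and with no spurious extra components, since this is what pins down $[\overline Y]_*=-\mathrm{id}$ and hence the contradiction $(d-1)\,\mathrm{id}=-\mathrm{id}$.
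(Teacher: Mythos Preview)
Your argument is correct and, for the independence of $\L$ and $\alpha$, proceeds along the same lines as the paper (connectedness of the space of Lefschetz pencils plus local constancy, which the paper makes precise via Ehresmann's theorem). The non-vanishing argument, however, is genuinely different. The paper works on the compactified double cover $\tilde\L$ and proves that the \emph{differential} $d_{u_1}\theta$ of the Abel--Jacobi-type map is nonzero at the distinguished base point: using that the sum $\varphi^+ +\varphi^- +\sum_i\varphi^i$ is constant (Abel's theorem applied to the moving hyperplane section) and that the $\varphi^i$ have vanishing differential at $u_1$ (ramification), one gets $d_{u_1}\theta=2\,d_{u_1}\varphi^+\neq 0$; factoring through the Albanese then forces $\theta_*\neq 0$. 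Your route is a global correspondence computation: from $X\times_\L X=\Delta+\overline Y$ and $[X\times_\L X]_*=0$ on $H^1$ you get $[\overline Y]_*=-\mathrm{id}$, which under the hypothesis $\overline\phi^+_*=\overline\phi^-_*$ would equal $(d-1)\,\mathrm{id}$, a contradiction when $g_X\ge 1$. This is cleaner and more algebraic for the bare non-vanishing statement. What the paper's approach buys, and yours does not, is the pointwise nonvanishing of the differential $d_{s_i}\theta_i$, which is exactly what is fed into the Jacobian-rank computation in the proof of the Jacobi-type Inversion Theorem (Section~\ref{sec-Jacobi}); your correspondence argument would not supply that ingredient. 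One small correction: your parenthetical ``$g_X=0$, i.e.\ $d\le 2$'' is false (rational normal curves have arbitrary degree), but this does not affect the logic, since you only use the implication $g_X=0\Rightarrow H_1(X,\Z)=0$.
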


Next, since $X$ is a smooth hyperplane section of $S$, we can
pick a general line $\L_S \subseteq (\P^N)^\vee:=\O_S$ passing through $X$, which forms a Lefschetz pencil \cite[Corollary 2.10]{Voisin2}.
Consider the incidence variety
$$S_{\L_S}=\left\{(x, t)\in S\times \L_S \,|\, t(x)=0\right\}.$$
Denote by $\p_S : S_{\L_S} \to \L_S$ the second projection and $S_t$ the hyperplane section over $t\in \O_S$.
Since $\L_S$ passes through $X$ and $X$ is smooth, then there is some point
 $r_0\in \L_S^{\sm}$ such that $X=S_{r_0}$.
 We have the monodromy representation endowed by the Gauss-Manin connection
$$\rho_S : \pi_1(\L^{\sm}_S,  r_0) \to  \Aut\left(H_{1}^{\van}(X, \Z)\right),$$
and the following conclusion about the stable $\Z$-submodules of $H_{1}^{\van}(X,\Z)$ under this monodromy action.
\begin{lemma}\label{lem-top-2}
The stable  $\Z$-submodules of $H_{1}^{\van}(X,\Z)$ under the monodromy action $\rho_S$
are $$mH_{1}^{\van}(X,\Z), \qquad m\in \Z.$$
\end{lemma}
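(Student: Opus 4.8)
The plan is to identify the $\Z$-submodules of $H_1^{\van}(X,\Z)$ that are stable under the monodromy group $\Gamma_S := \rho_S(\pi_1(\L_S^{\sm}, r_0))$ and show they are exactly the scalar multiples $m H_1^{\van}(X,\Z)$. The decisive input is the irreducibility of the vanishing cohomology as a $\Gamma_S$-representation over $\Q$, which follows from the standard theory of Lefschetz pencils: since $\L_S$ is a general pencil of hyperplane sections of $S$ through $X$, the monodromy acts on $H_1^{\van}(X,\Q)$ by the Picard--Lefschetz formula, and the vanishing cycles form a single orbit (they are all conjugate) that spans $H_1^{\van}(X,\Q)$; by the classical argument (\cf \cite[Lemma 3.27, Theorem 3.27]{Voisin2}) any $\Gamma_S$-stable $\Q$-subspace either contains every vanishing cycle or is orthogonal to all of them, hence is either all of $H_1^{\van}(X,\Q)$ or zero. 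So $H_1^{\van}(X,\Q)$ is an irreducible $\Q[\Gamma_S]$-module.

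First I would record this irreducibility statement and its immediate consequence: if $L \subseteq H_1^{\van}(X,\Z)$ is a nonzero $\Gamma_S$-stable $\Z$-submodule, then $L \otimes_\Z \Q = H_1^{\van}(X,\Q)$, so $L$ has finite index in $H_1^{\van}(X,\Z)$ (recall that throughout the paper homology is taken modulo torsion, so $H_1^{\van}(X,\Z)$ is a free $\Z$-module of full rank $b$). Thus every nonzero stable submodule is a full-rank sublattice. The content of the lemma is the much sharper claim that such a sublattice must be \emph{homothetic} to the whole lattice, i.e. of the form $mH_1^{\van}(X,\Z)$.

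The key step is therefore: a full-rank $\Gamma_S$-stable sublattice $L$ of $H_1^{\van}(X,\Z)$ is of the form $m H_1^{\van}(X,\Z)$. I would argue as follows. The Picard--Lefschetz formula says that for each vanishing cycle $\delta$ the generator $T_\delta$ of the local monodromy acts by $x \mapsto x \pm \langle x, \delta\rangle \delta$ (the sign depending on the parity of the dimension; here $\dim X = 1$ so $x \mapsto x - \langle x,\delta\rangle\delta$ with $\langle\delta,\delta\rangle = 0$ after the appropriate normalization, or the transvection formula in the odd case — in any event $(T_\delta - \mathrm{id})(x) = \pm\langle x,\delta\rangle\,\delta$). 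Hence for any $x \in L$ we get $\langle x, \delta\rangle\,\delta \in L$ for every vanishing cycle $\delta$. Let $m$ be the largest integer such that $L \subseteq m H_1^{\van}(X,\Z)$, and write $L = m L'$ with $L'$ a $\Gamma_S$-stable sublattice not contained in any $kH_1^{\van}(X,\Z)$ for $k>1$; it suffices to show $L' = H_1^{\van}(X,\Z)$. Because the vanishing cycles span $H_1^{\van}(X,\Z)$ over $\Z$ (not just over $\Q$ — this uses that an indivisible vanishing cycle is a primitive vector and that the set of all vanishing cycles for a Lefschetz pencil generates the full integral vanishing lattice, \cf \cite[Proposition 3.23]{Voisin2}), and because $\langle x,\delta\rangle\delta \in L'$ for all $x\in L'$ and all vanishing $\delta$, one shows the ideal generated by the integers $\langle x,\delta\rangle$ as $x$ ranges over $L'$ and $\delta$ over vanishing cycles is all of $\Z$ — otherwise that gcd would divide the index $[H_1^{\van}:L']$ in a way contradicting the maximality/indivisibility of $m$. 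Concretely: pick a $\Z$-basis of $H_1^{\van}(X,\Z)$ consisting of vanishing cycles $\delta_1,\dots,\delta_b$; for $x \in L'$, the coordinates of $x$ relative to a dual-type basis are recovered (up to the intersection form, which is unimodular on $H_1^{\van}(X,\Z)$ since $X$ is a curve) from the integers $\langle x, \delta_i\rangle$, and the relations $\langle x,\delta_i\rangle \delta_i \in L'$ together with $\gcd_i \langle x,\delta_i\rangle$-type computations, run over all $x$, force $L' = H_1^{\van}(X,\Z)$.

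The main obstacle I anticipate is precisely this last integral step: passing from "irreducible over $\Q$, so full-rank sublattice" to "homothetic sublattice" is false for a general group acting irreducibly (e.g. there are irreducible lattices with non-homothetic stable sublattices), so one genuinely needs to exploit the special structure of the monodromy — that it is generated by transvections/Picard--Lefschetz reflections in a \emph{spanning} set of primitive vectors, together with the unimodularity of the intersection pairing on $H_1^{\van}(X,\Z)$ for a curve. Getting the bookkeeping right — in particular ruling out that $L'$ has some uniform "partial" index along a sublattice — is the delicate point, and I would handle it by the gcd argument above applied carefully to a basis of vanishing cycles, using that two vanishing cycles $\delta, \delta'$ with $\langle \delta, \delta'\rangle = 1$ exist (connectedness of the Dynkin-type diagram of vanishing cycles, again standard for Lefschetz pencils) to "spread" divisibility across the whole lattice. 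Finally I would note that the family $\{mH_1^{\van}(X,\Z)\}_{m\in\Z}$ is manifestly stable and exhausts the possibilities, completing the proof.
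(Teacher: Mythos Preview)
Your proposal is correct and follows essentially the same route as the paper: both arguments exploit the Picard--Lefschetz transvections $(T_\delta-\mathrm{id})(x)=\pm\langle x,\delta\rangle\,\delta$ together with a gcd/descent argument on the coefficients of elements of the stable submodule, using that the vanishing cycles generate $H_1^{\van}(X,\Z)$ integrally and that the intersection form on the vanishing lattice is unimodular. The only cosmetic difference is that the paper works directly with an explicit symplectic basis $\{\delta_i,\gamma_i\}$ of vanishing cycles satisfying $\langle\delta_i,\gamma_j\rangle=\delta_{ij}$ (which makes your ``pair $\delta,\delta'$ with $\langle\delta,\delta'\rangle=1$'' step immediate) and runs a descending gcd argument on elements of $V$, whereas you first factor out the maximal $m$ with $L\subseteq mH_1^{\van}$ and then argue $L'=H_1^{\van}$; these are equivalent reformulations of the same computation.
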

More generally, we will prove this lemma for any middle vanishing (co)homology of an odd dimensional variety  in  Section \ref{s:stable}.

By Lemma \ref{lem-top-1}, we know that $\Im \a_{X*}^{\opname{top}}$ is a nonzero stable  $\Z$-submodules of $H_{1}^{\van}(X,\Z)$ under the monodromy action  $\rho_S$,
 thus  $\Im \a_{X*}^{\opname{top}}=mH_{1}^{\van}(X, \Z)$ for some nonzero integer $m$ by Lemma \ref{lem-top-2}.
Therefore, we only need to prove Lemma \ref{lem-top-1} and Lemma \ref{lem-top-2} in order to obtain the Strong Tube Theorem \ref{thm-tube}.

 \subsection{Proof of Lemma \ref{lem-top-1}}\label{s:restate}
 Let $X$ be a nondegenerate irreducible complex projective curve of genus $g$ and  degree $d$ in  $\P^{N-1}$.
 Note that the base locus $B$ of a Lefschetz pencil is of codimension 2 in $X$, thus $B=\emptyset$ and the incidence variety
$$X_\L=\left\{(x, t)\in X\times \L \, |\, t(x)=0 \right\}\cong
 \opname{Bl}_BX =X.$$
Denote by $\p: X\cong X_\L \to \L$ the second projection which is a branched covering map of degree $d$ with branch locus $\L^{\sg}$ and $X_t:=\p^{-1}(t)$ the hyperplane section  over $t\in \L$.    Since every hyperplane section $X_t$ has at most one ordinary double point as singularity,  the ramification index of each branched point is at most 1.   By Riemann--Hurwitz formula, we have
$$2g-2=d(-2)+\deg\L^{\sg},$$
then $e:=\deg \L^{\sg}=2(d+g-1)$ is an even number.  Denote by $\L^{\sg}=\{t_1, t_2, \cdots, t_e\}$.
We can make a double cover $\sigma: \tilde\L\to \L$ branched along $\L^{\sg}$. 
Let $\tilde X$ be the normalization of the fiber product $X \times_{\L} \widetilde\L$. We have the following commutative diagram
 \begin{equation*}
 \xymatrix{\tilde X\times_{\tilde\L}\tilde X\setminus \Delta\ar[r]^-{\zeta}\ar[d]_{\widehat\p}\ar@{.>}[dr]^{\opname{q}}&\tilde X\ar[r]^{\tilde\sigma}\ar[d]^{\tilde\p}
& X\ar[d]^{\p}\\
 \tilde X\ar[r]^{\tilde\p}&\widetilde\L\ar[r]^{\sigma}& \L},
 \end{equation*}
where $\Delta$ denotes the diagonal of the fiber product $\tilde X\times_{\tilde\L}\tilde X$.
Note that
$\tilde X$ is a smooth irreducible connected curve and $\tilde \p$ is an unbranched covering map of degree $d$, then ${\tilde{X}}_{t_1}={\tilde{\p}}^{-1}(t_1)$ is diffeomorphic to $X_{t_0}$. Note that there is no monodromy action on ${\tilde{X}}_{t_1}$ around each point in $\L^{\sg}$,  then $\rho_X$ is equivalent to the monodromy representation $\tilde\rho_X$ induced by $\tilde\p$ :
$$\tilde\rho_X: \pi_1(\tilde\L, t_1) \twoheadrightarrow \Aut\left(H_{0}^{\van}(\tilde X_{t_1}, \Z)\right),$$
which is surjective because $\rho_X$ is surjective due to the singularity type of $\L^{\sg}$.
Denote by
\begin{itemize}
\item $X_{t_1}=\{ 2x_1, y_{1}, \cdots, y_{d-2}\}$, where $x_1$ the unique singular point on $X_{t_1}$;
\item $\tilde X_{t_1}=\{x_1^+, x_1^-, y_{1}, \cdots, y_{d-2}\}$, where $\{x_1^+, x_1^-\}=\tilde\sigma^{-1}(x_{1})$;
\end{itemize}
then $\alpha=[x_1^+ - x_1^-]\in H_{0}^{\van}(\tilde X_{t_1}, \Z)$. Let $\tilde G_{(x_1^+, x_1^-)}$ be the stabilizer of $[x_1^+ - x_1^-]$ in $\pi_1(\tilde\L, t_1)$, then it plays the same role as $G_{\alpha}$.

Denote by $\Xi_{t_1}:=\opname{q}^{-1}(t_1)$ the set of all ordered pairs of distinct points in $\tilde X_{t_1}$. 
Note that $H_{0}^{\van}(\tilde X_{t_1}, \Z)$ is generated by the difference of two distinct points in $\tilde X_{t_1}$, then $\tilde\rho_X$ is the same as the following monodromy representation which is still denoted by $\tilde\rho_X$:
\begin{equation}\label{isomorphism}
\begin{aligned}
\tilde\rho_X: \pi_1(\tilde{\L}, t_1) \to \opname{Aut}\left(\Xi_{t_1}\right)\cong S_{d(d-1)},
\end{aligned}
\end{equation} where  $S_{d(d-1)}$ denotes the symmetric group on $d(d-1)$ letters.
The isomorphism in (\ref{isomorphism}) is due to $|\Xi_{t_1}|=d(d-1)$ because $\opname{q}$ is  an unbranched covering map of degree $d^2-d$.
Let $\opname{Aut}\left(\Xi_{t_1}\right)_{(x_1^+,x_1^-)}$ be the stabilizer of the ordered pair $(x_1^+, x_1^-)$ in
 $\opname{Aut}\left(\Xi_{t_1}\right)$, 
 then $$\opname{Aut}\left(\Xi_{t_1}\right)_{(x_1^+,x_1^-)}\cong S_{d(d-1)-1} \qquad \text{and}\qquad
\tilde G_{(x_1^+,x_1^-)}={\tilde\rho_X}^{-1}\left(\opname{Aut}\left(\Xi_{t_1}\right)_{(x_1^+,x_1^-)}\right).$$ 
 Note that $\rho_X$ induces an injective group homomorphism
 $$\frac{\pi_1(\tilde{\L}, t_1)}{\tilde G_{(x_1^+,x_1^-)}} \hookrightarrow \frac{\opname{Aut}\left(\Xi_{t_1}\right)}{\opname{Aut}\left(\Xi_{t_1}\right)_{(x_1^+,x_1^-)}}\cong \frac{S_{d(d-1)}}{S_{d(d-1)-1}},$$
 so the group index
 $$\left[\pi_1(\tilde{\L}, t_1):  \tilde G_{(x_1^+,x_1^-)}\right]\leq \frac{|S_{d(d-1)}|}{|S_{d(d-1)-1}|}=\frac{[d(d-1)]!}{[d(d-1)-1]!}=d(d-1).$$
Let $\kappa : (\F, u_1) \to (\tilde{\L}, t_1)$ be the path-connected unbranched covering space of $\tilde{\L}$ corresponding to $\tilde G_{(x_1^+,x_1^-)}$, \ie, $\pi_1 (\F, u_1) \cong  \tilde G_{(x_1^+,x_1^-)}$, then
$$\deg \kappa = \left[\pi_1(\tilde{\L}, t_1): \tilde G_{(x_1^+,x_1^-)}\right] \leq d(d-1).$$
Since 
$\tilde G_{(x_1^+,x_1^-)}$ 
 is a subgroup of $\tilde\p_{*}\left(\pi_1\left(\tilde{X}\times_{\tilde{\L}}\tilde{X}\setminus \Delta, (x_1^+, x_1^-)\right)\right)$, by the lifting criterion, 
 $\kappa$ factors through $\opname{q}$, \ie, we have the following commutative diagram
 \begin{equation*}
\xymatrix{&\left(\tilde{X}\times_{\tilde{\L}}\tilde{X}\setminus\Delta, (x_1^+, x_1^-)\right)\ar[d]_{\opname{q}} \ar[r]^>>>>>{\xi} &X^+\times X^- \ar[r]^{\a_X}& J(X)\\
(\F, u_1)\ar[r]^{\kappa}\ar@{.>}[ru]^{\eta}\ar@{.>}[rrru]|-{\theta}&(\tilde{\L}, t_1)},
\end{equation*}
where $\a_X$ denotes the canonical Abel--Jacobi mapping
\begin{equation*}
\begin{aligned}
\opname{a}_X: X^+\times X^- &\to J(X)\\
(x^+, x^-)&\mapsto \int^{x^+}_{x^-} .
\end{aligned}
\end{equation*}
In fact,  $\tilde G_{(x_1^+,x_1^-)}=\tilde\p_{*}\left(\pi_1\left(\tilde{X}\times_{\tilde{\L}}\tilde{X}\setminus \Delta, (x_1^+, x_1^-)\right)\right)$, thus $\F$ is a connected component of $\tilde{X}\times_{\tilde{\L}}\tilde{X}\setminus\Delta$ including $(x_1^+, x_1^-)$.
Denote by $\theta=\opname{a}_X\circ \xi\circ\eta: \F \to J(X)$
which induces a homomorphism on homology
 \begin{equation*}
 \theta_{*}: H_1(\F,\Z) \to H_1(J(X),\Z) \xrightarrow{\sim} H_1(X, \Z) . 
 \end{equation*}
Since $\pi_1(\F, u_1)\cong \tilde G_{(x_1^+,x_1^-)}$ and $H_1(\F, \Z)=\pi_1(\F, u_1)/[\pi_1(\F, u_1),\pi_1(\F, u_1)]$, we have
$$H_1(\tilde G_{(x_1^+,x_1^-)}, \Z) =\tilde G_{(x_1^+,x_1^-)}/[\tilde G_{(x_1^+,x_1^-)},\tilde G_{(x_1^+,x_1^-)}] \cong H_1(\F, \Z).$$
Therefore, $\theta_{*}$ is exactly the topological Abel--Jacobi homomorphism $\a^{\opname{top}}_{X*}$, 
then Lemma \ref{lem-top-1} is equivalent to the following two lemmas.
  \begin{lemma}\label{thm-top-2}
 The image of  $ \theta_{*}$ is independent of the choice of the Lefschetz pencil and the elementary vanishing cycle.
 \end{lemma}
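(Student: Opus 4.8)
The plan is to prove the two independences separately: first, that for a fixed Lefschetz pencil $\L$ the subgroup $\Im\theta_*\subseteq H_1(X,\Z)$ does not depend on the chosen elementary vanishing cycle, and then that it does not change when $\L$ is deformed.

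\emph{Independence of the elementary vanishing cycle.} Fix $\L$ and recall that $\F$ is a connected component of $\tilde X\times_{\tilde\L}\tilde X\setminus\Delta$. Since $\tilde X$ is connected, the monodromy group of the degree-$d$ covering $\tilde\p\colon\tilde X\to\tilde\L$ acts transitively on the $d$ sheets; in fact it equals $A_d$, being the index-two subgroup of $S_d$ generated by the products $\tau_i\tau_j$ of the local monodromy transpositions $\tau_i$ of the Lefschetz pencil $X\to\L$ (which themselves generate $S_d$). As $A_d$ is transitive on the two-element subsets of $\{1,\dots,d\}$ for every $d\ge 2$, the quotient of $\tilde X\times_{\tilde\L}\tilde X\setminus\Delta$ by the factor-swapping involution $\iota$ is connected; hence $\tilde X\times_{\tilde\L}\tilde X\setminus\Delta$ has at most two connected components, which are either equal or interchanged by $\iota$. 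Every elementary vanishing cycle $[x^+-x^-]$ on $\tilde X_{t_1}$ gives a point $(x^+,x^-)$, with $\iota(x^+,x^-)=(x^-,x^+)$ corresponding to $-\alpha$; since the unordered pairs $\{x^+,x^-\}$ so obtained all lie in a single $A_d$-orbit, the component through such a point is, up to $\iota$, the same for every choice. Finally $\a_X\circ\iota=-\a_X$ on $X\times X$, so $\theta$ restricted to $\iota(\F)$ induces $-\theta_*$ on $H_1$, which has the same image as $\theta_*$. Therefore $\Im\theta_*$ does not depend on the elementary vanishing cycle over $\L$.

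\emph{Independence of the pencil.} The Lefschetz pencils of $X\subseteq\P^{N-1}$ form a Zariski-dense open subset $\mathcal{U}$ of the Grassmannian of lines in $(\P^{N-1})^\vee$, hence an irreducible and so path-connected space; moreover along any path in $\mathcal{U}$ the discriminant stays a reduced set of $e$ points, without collisions. Given two Lefschetz pencils $\L_0$ and $\L_1$, I would join them by a path $(\L_s)_{s\in[0,1]}$ in $\mathcal{U}$. Over $[0,1]$ the families $\{\tilde X_s\}_s$ and $\{\tilde\L_s\}_s$ are smooth and proper, hence locally trivial by Ehresmann's theorem, and the same then holds for $\{\tilde X_s\times_{\tilde\L_s}\tilde X_s\setminus\Delta_s\}_s$ and for the subfamily $\{\F_s\}_s$ obtained by selecting connected components; meanwhile the target $J(X)$ and the lattice $H_1(J(X),\Z)\cong H_1(X,\Z)$ are constant and the maps $\theta_s\colon\F_s\to J(X)$ vary continuously. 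It follows that there is an isomorphism $H_1(\F_0,\Z)\cong H_1(\F_1,\Z)$ intertwining $\theta_{0*}$ and $\theta_{1*}$, whence $\Im\theta_{0*}=\Im\theta_{1*}$. Combined with the first step, $\Im\theta_*$ is independent of all choices.

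\emph{Main obstacle.} The delicate point is the local triviality in the second step. One must check that $\tilde X_s$, and with it $\F_s$ and $\theta_s$, genuinely vary in a locally trivial manner over a path in $\mathcal{U}$ --- that is, that no degeneration occurs inside $\mathcal{U}$ (the pencil stays Lefschetz and the discriminant stays reduced, so the topological type is constant) --- and that selecting a connected component and composing with the Abel--Jacobi map is compatible with the trivializations. A smaller point, used above, is pinning down the monodromy group of $\tilde X\to\tilde\L$ and verifying its transitivity on two-element subsets of sheets, which needs separate attention for the smallest values of $d$.
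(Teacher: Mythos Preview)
Your proposal is correct and, for independence of the pencil, follows the same route as the paper: both use that the locus $\opname{U}$ of Lefschetz pencils is connected and apply Ehresmann's theorem to obtain local triviality of the family $(\L,\L^{\sg},\tilde\L,\F)$ over $\opname{U}$, so that the subgroup $\Im\theta_{\L*}\subseteq H_1(X,\Z)$ is locally constant, hence constant. The paper makes this last step precise by encoding a subgroup of $\Z^{2g}$ via its invariant factors as a point in the discrete set $\N^{2g}$, and then verifies in detail the three conditions needed for Ehresmann (properness and submersion of the incidence correspondence $\I_{\opname{U}}\to\opname{U}$, and the same for its restriction to the discriminant) --- exactly your ``main obstacle''.

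The one genuine difference is how independence of the elementary vanishing cycle is handled. You treat it first, internally to a fixed $\L$, by computing that the monodromy of the unramified cover $\tilde\p\colon\tilde X\to\tilde\L$ is $A_d$ and hence transitive on two-element subsets, so that $\tilde X\times_{\tilde\L}\tilde X\setminus\Delta$ has at most two components interchanged by the swap $\iota$, and then use $\a_X\circ\iota=-\a_X$. The paper instead absorbs this into the deformation argument: as $\L$ moves around loops in $\opname{U}$ the branch points $t_k$ --- and with them the vanishing cycles $\alpha_k$ --- are permuted, so constancy of the image along such loops forces all the $\alpha_k$ to give the same subgroup. Your argument is more self-contained and makes the $A_d$ monodromy explicit; the paper's is more economical but leaves the well-definedness of its map $\varepsilon$ somewhat implicit. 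Both are valid.

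A small wording point: the Ehresmann trivialization does not literally give an isomorphism ``intertwining $\theta_{0*}$ and $\theta_{1*}$'' (the maps $\theta_s$ only vary continuously), but what you actually need --- that $s\mapsto\Im\theta_{s*}$ is locally constant in the fixed lattice $H_1(X,\Z)$ --- does follow.
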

  \begin{lemma}\label{thm-top-3}
 The image of  $ \theta_{*}$ is nonzero for any Lefschetz pencil and any elementary vanishing cycle.
 \end{lemma}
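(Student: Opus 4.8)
The plan is to reduce Lemma~\ref{thm-top-3} to the single assertion that the morphism $\theta\colon\F\to J(X)$ is \emph{nonconstant}. This is enough: $\F$ is a smooth projective curve, being a finite unbranched cover of $\tilde{\L}$, and $\tilde{\L}$ --- the double cover of $\L\cong\P^{1}$ branched at the finite set $\L^{\sg}$ --- is itself a smooth projective curve. A nonconstant morphism from a connected smooth projective curve to an abelian variety has nonzero differential, hence pulls back some translation--invariant holomorphic $1$--form to a nonzero form on $\F$; so $\theta^{*}$ is nonzero on $H^{1,0}$, and therefore $\theta_{*}\colon H_{1}(\F,\Z)\to H_{1}(J(X),\Z)\cong H_{1}(X,\Z)$ is nonzero modulo torsion. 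Since $H_{1}(\F,\Z)\cong H_{1}(G_{\alpha},\Z)$ and $\theta_{*}=\a_{X*}^{\opname{top}}$, this says precisely that $\Im\a_{X*}^{\opname{top}}\neq 0$.

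To prove that $\theta$ is nonconstant I would compute its value at two points of $\F$. At the base point $u_{1}$, lying over the point of $\tilde{\L}$ above $t_{1}\in\L^{\sg}$ and carrying the ordered pair $(x_{1}^{+},x_{1}^{-})$: by construction $x_{1}^{+}$ and $x_{1}^{-}$ are the two branches, in the normalization $\tilde{X}$, of the node of $X\times_{\L}\tilde{\L}$ over the singular point $x_{1}$ of $X_{t_{1}}$ --- locally over a simple branch point of $\p$ the fiber product is a node $\{u^{2}=v^{2}\}$, whose two branches $u=\pm v$ both collapse to the point $u=0$ of $X$ --- so $\tilde{\sigma}(x_{1}^{+})=\tilde{\sigma}(x_{1}^{-})=x_{1}$. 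Hence $\xi(u_{1})=(x_{1},x_{1})$ lies on the diagonal of $X\times X$ and $\theta(u_{1})=\a_{X}(x_{1})-\a_{X}(x_{1})=0$. On the other hand, pick any $f\in\F$ lying over a point $\tilde{t}\in\tilde{\L}$ not above $\L^{\sg}$, with attached ordered pair $(a,b)$ of distinct points of $\tilde{X}_{\tilde{t}}$. Over such $\tilde{t}$ the map $\tilde{\sigma}$ restricts to a bijection of $\tilde{X}_{\tilde{t}}$ onto the $d$ distinct points of the hyperplane section $X_{\sigma(\tilde{t})}$, so $\tilde{\sigma}(a)\neq\tilde{\sigma}(b)$; since $\a_{X}\colon X\hookrightarrow J(X)$ is injective --- we may assume $g(X)\geq 1$, as $H_{1}(X)=0$ otherwise --- we get $\theta(f)=\a_{X}(\tilde{\sigma}a)-\a_{X}(\tilde{\sigma}b)\neq 0$. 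Thus $\theta$ takes both the value $0$ and a nonzero value, so it is nonconstant. Because the argument uses only the generic features of the construction --- the node of a singular fiber, the fiberwise injectivity of $\tilde{\sigma}$, and the injectivity of $\a_{X}$ --- it applies verbatim to every Lefschetz pencil $\L$ and every elementary vanishing cycle $\alpha$, which is the full statement of the lemma.

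There is no genuinely hard step here; the points that need care are (i) the local identification $\tilde{\sigma}(x_{1}^{+})=\tilde{\sigma}(x_{1}^{-})=x_{1}$, which rests on the explicit nodal description of $X\times_{\L}\tilde{\L}$ over a simple branch point of $\p$, and (ii) the standard fact that a nonconstant morphism from a smooth projective curve to an abelian variety is nonzero on $H_{1}$, for which the compactness of $\F$ --- a consequence of $\F\to\tilde{\L}$ being a finite cover of the projective curve $\tilde{\L}$ --- is essential. It is also worth double--checking that $\theta$ is indeed the map $(a,b)\mapsto\a_{X}(\tilde{\sigma}a)-\a_{X}(\tilde{\sigma}b)$, as read off from $\theta=\a_{X}\circ\xi\circ\eta$ together with the definition of $\xi$.
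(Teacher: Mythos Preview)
Your argument is correct. Both you and the paper reduce the lemma to showing that $\theta$ is nonconstant and then pass to $H_{1}$ via the Albanese property (you phrase it as pulling back a translation-invariant $1$-form; the paper lifts $\chi\colon J(\F)\to J(X)$ to an affine-linear map $z\mapsto Az+b$ and observes $\chi_{*}=A$). Where you differ is in the proof of nonconstancy: you compute two values directly, using $\tilde{\sigma}(x_{1}^{+})=\tilde{\sigma}(x_{1}^{-})=x_{1}$ to get $\theta(u_{1})=0$ and the injectivity of $\a_{X}$ on a curve of genus $\geq 1$ to get $\theta(f)\neq 0$ at a generic point. The paper instead proves the stronger fact $d_{u_{1}}\theta\neq 0$, by analyzing the ramification of the $d$ liftings $\phi^{\pm},\phi^{1},\dots,\phi^{d-2}$ at $u_{1}$ and invoking Abel's theorem to show that the sum $\varphi^{+}+\varphi^{-}+\sum\varphi^{i}$ is constant, which forces $d_{u_{1}}\theta=2\,d_{u_{1}}\varphi^{+}\neq 0$.

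Your route is shorter and entirely adequate for the lemma as stated. The paper's route costs more but buys something it needs later: the precise nonvanishing $d_{s_{i}}\theta_{i}\neq 0$ at each ramification point is reused in Section~\ref{sec-Jacobi} to compute the rank of the Jacobian matrix $J(\vartheta)|_{s}$ and prove the Jacobi-type Inversion Theorem. If you adopt your argument here, you would still need to supply that differential computation separately when you reach that section.
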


 By Lemma \ref{thm-top-2}, we only need to prove Lemma \ref{thm-top-3} for a specific Lefschetz pencil $\L$ and a specific elementary vanishing cycle $\alpha$.  


\medskip

\noindent{\bf Proof of Lemma \ref{thm-top-2}.} 
\label{ss: IndLef}
Denote by $\O_X:=(\P^{N-1})^\vee$ parametrizes all hyperplane sections of $X$
and $\opname{G}:=\opname{Gr}\left(1, \O_{X}\right)$
the Grassmannian, then any point in $\Gr$ represents a pencil of hyperplane sections of $X$. In particular, a Lefschetz pencil $\L$ is a point in $\Gr$. Denote by $\opname{U}$ the set of all Lefschetz pencils, which is an open subset of $\Gr$ by the following proposition.
\begin{proposition}{\cite[Proposition 2.9]{Voisin2}}
Let $X$ be a smooth nondegenerate subvariety of $\P^N$. Then a pencil of hyperplane sections $\{X_t\}_{t\in \P^1}$ is a Lefschetz pencil if and only if one of the following two conditions is satisfied.
\begin{enumerate}
\item $X^\vee$ is a hypersurface in $(\P^N)^\vee$ and the corresponding line $\P^1$ meets $X^\vee$ transversally in the open dense set $X^\vee_o$ which denotes the subset of $X^\vee$ parametrizing the hyperplanes $H$ such that $X\cap H$ has at most one ordinary double point as singularity.
\item $\dim X^\vee\leq N-2$ and the corresponding line $\P^1$ does not meet $X^\vee$ in $(\P^N)^\vee$.
\end{enumerate}
where $X^\vee:=\{ H\in (\P^N)^\vee\, \vert \, H\cap X \text{ is singular}\}$ is the dual variety of $X$.
\end{proposition}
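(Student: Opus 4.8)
The statement is a standard characterization of Lefschetz pencils; the plan is to analyze the incidence situation between the line $\P^1 \subseteq (\P^N)^\vee$ and the dual variety $X^\vee$. First I would recall the defining property of the dual variety: a hyperplane $H \in (\P^N)^\vee$ lies in $X^\vee$ precisely when $H$ is tangent to $X$ at some point, equivalently when $X \cap H$ is singular. Moreover, by the standard local analysis of tangency (see Voisin, or Griffiths–Harris), the open dense subset $X^\vee_o \subseteq X^\vee$ consists of those $H$ for which the tangency is nondegenerate, i.e. $X \cap H$ has exactly one ordinary double point; along $X^\vee \setminus X^\vee_o$ the section $X \cap H$ is "more singular" (worse than one node). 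So condition (2) in the definition of a Lefschetz pencil — every $X_t$ has at most one ordinary double point — is equivalent to requiring that the line $\P^1$ meets $X^\vee$ only inside $X^\vee_o$.

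\textbf{Key steps.} I would then split into the two cases according to $\dim X^\vee$.

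\emph{Case $\dim X^\vee = N-1$ (the generic case).} Here $X^\vee$ is a hypersurface in $(\P^N)^\vee$, so a general line $\P^1$ does meet it, in $\deg X^\vee$ points counted with multiplicity. The base-locus condition (1) in the definition of a Lefschetz pencil — that $B = \bigcap_{t} X_t$ be smooth of codimension $2$ — is automatic for a general pencil and can be arranged; the real content is condition (2). I would argue: if $\P^1$ meets $X^\vee$ transversally and only within $X^\vee_o$, then at each intersection point $t$ the section $X_t$ has exactly one ordinary double point, and at all other $t$ the section is smooth, so $\{X_t\}$ is a Lefschetz pencil. Conversely, if $\{X_t\}$ is a Lefschetz pencil then no $X_t$ has a worse-than-nodal singularity, forcing $\P^1 \cap X^\vee \subseteq X^\vee_o$; and transversality of $\P^1$ with $X^\vee$ at such a point is exactly the statement that the node "deforms maximally," i.e. that the local family $\{X_t\}$ is a versal deformation of the node — a tangency of the line $\P^1$ with $X^\vee$ would produce a fiber with a degenerate (non-Morse) critical point. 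This last equivalence is the technical heart: it requires matching the transversality of $\P^1$ to $X^\vee$ at a smooth point against the Morse condition on the defining function of the pencil, which one checks in local coordinates using that the projective tangent space to $X^\vee$ at a point $[H] \in X^\vee_o$ is dual to the (unique) singular point of $X \cap H$.

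\emph{Case $\dim X^\vee \le N-2$.} Here a general line $\P^1 \subseteq (\P^N)^\vee$ misses $X^\vee$ entirely (two subvarieties of a projective space of complementary-or-less dimension can be separated by a general linear section), so every $X_t$ is smooth, and again after arranging the base-locus condition $\{X_t\}$ is trivially a Lefschetz pencil (with empty singular set). Conversely, if $\P^1$ meets $X^\vee$ at some $t$, then $X_t$ is singular; one must still rule this out, but since $\dim X^\vee \le N-2 < N-1$ a line can be chosen disjoint from it, which is what the statement asserts as the criterion. The subtlety here is that the statement is a biconditional about a \emph{given} line, so in this case "does not meet $X^\vee$" is both necessary (a meeting point gives a singular fiber, potentially worse than a node if it lies off $X^\vee_o$, but in any case one checks that when $\dim X^\vee \le N-2$ every point of $X^\vee$ already gives a non-ordinary-double-point degeneration, or the pencil fails to be a Lefschetz pencil for another reason) and sufficient.

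\textbf{Main obstacle.} The hard part is the local computation in Case $\dim X^\vee = N-1$ identifying transversality of $\P^1$ with $X^\vee$ at a point of $X^\vee_o$ with the Morse/versality condition on the pencil. Everything else is dimension counting and the definition of the dual variety; but that equivalence is where the geometry of the second fundamental form of $X^\vee$ (or equivalently the Hessian of the section at the node) enters, and it must be done carefully in coordinates — this is exactly the content of the cited reference and is not merely formal.

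\textbf{Remark.} Since this proposition is quoted verbatim from \cite[Proposition 2.9]{Voisin2}, in the paper one should simply cite it; the proof above is the route one would take to reprove it from scratch.
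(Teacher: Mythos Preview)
Your observation in the final remark is exactly right: the paper does not prove this proposition at all but simply quotes it from \cite[Proposition 2.9]{Voisin2} and uses it to conclude that the set $\opname{U}$ of Lefschetz pencils is open in the Grassmannian. Your sketch of the standard argument is reasonable, but since the paper's ``proof'' is just the citation, there is nothing further to compare.
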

Note that we are in the first case because there is no vanishing homology in the second case, so $\opname{U}\cong X^\vee_o$ is an open subset of $\Gr$.

Pick $\mathbb{L}\in \opname{U}$ and write $X^\vee\cap \L=\{t_1, \cdots, t_e\}$, where $e$ is a positive even integer.  Let $\alpha_k$ be an elementary vanishing cycle corresponding to $t_k$.  By the analysis in Section \ref{s:restate}, we have the following commutative diagram:
 \begin{equation*}
\xymatrix{&\left(\tilde{X}\times_{\tilde{\L}}\tilde{X}\setminus\Delta, (x_k^+, x_k^-)\right)\ar[d]_{\opname{q}_{\L}} \ar[r]^>>>>>{\xi_{\L}} &X^+\times X^- \ar[r]^{\a_X}& J(X)\\
(\F_{\L}, u_k)\ar[r]^{\kappa_{\L}}\ar@{.>}[ru]^{\eta_{\L}}\ar@{.>}[rrru]|-{\theta_{\L}}&(\tilde{\L}, t_k)\ar[r]^{\sigma_{\L}}& (\L, t_k)}.
\end{equation*}
Note that $H_1(X, \Z): =\bigoplus_{i=1}^{2g}\Z\gamma_i\cong \Z^{2g}$ is a finitely generated free Abelian group, then any subgroup $A$ is free generated by
$$d_1\gamma_1, \,\quad d_1d_2\gamma_2, \,\quad  \cdots, \, \quad d_1d_2\cdots d_{2g}\gamma_{2g},$$
 where 
$d_1, d_2, \cdots, d_{2g}$ are nonnegative integers. The sequence $(d_1, d_2, \cdots, d_{2g})$  depends only on the group $H_1(X, \Z)$ and the subgroup $A$, and not on the particular basis $\{\gamma_1, \cdots, \gamma_{2g}\}$.
Denote by $\N$ the set of nonnegative integers, then the discrete set $\N^{2g}$ parametrizes all subgroups in $H_1(X, \Z)$.
Since $\theta_{\L}$ induces a homomorphism on homology
 \begin{equation*}
 \theta_{\L*}: H_1(\F_{\L},\Z) \to H_1(J(X),\Z) \xrightarrow{\sim} H_1(X, \Z),
 \end{equation*}
thus $\im \theta_{\L*}$ is a subgroup of $H_1(X, \Z)$.
Therefore, we can define
\begin{equation*}
\begin{aligned}
\varepsilon: \opname{U} &\to  \N^{2g}\\
\L &\mapsto\varepsilon(\mathbb{L})=\im \theta_{\L*}.
\end{aligned}
\end{equation*}
Clearly, $\im \theta_{\L*}$ does not depend on the choice of the Lefschetz pencil $\L$ if and only if $\varepsilon$ is a constant map.
Since $\opname{U}$ is connected and the quotient topology of $\N^{2g}$ is discrete,  $\varepsilon$ is constant if and only if it is continuous.
Hence, we will prove that $\varepsilon$ is continuous in the rest of this section.

Consider the incidence variety
$$ \xymatrix{\mathfrak{I}=\left\{(\L, t)\in \Gr \times \O_{X} \, |\, t\in \L\right\} \ar[r]^>>>>{\pr_2}\ar[d]^{\pr_1}& \O_X\\ \Gr}.$$
Denote by $\mathfrak{I}_{\opname{U}}:=\opname{pr}_1^{-1}(\opname{U})=\left\{(\L, t)\in \opname{U}\times \O_{X} \, |\, t\in \L\right\}$.
If the following conditions
\begin{itemize}
\item \textbf{Cond 1.} \, $\pr_1: \I_{\opname{U}} \to \opname{U}$ is a proper submersion;
\item  \textbf{Cond 2.}  \, $\pr_2^{-1}(X^\vee) \cap \I_{\opname{U}}$ is a closed submanifold;
\item  \textbf{Cond 3.}  \, $\pr_1 : \pr_2^{-1}(X^\vee) \cap \I_{\opname{U}} \to \opname{U}$ is also a submersion;
\end{itemize}
hold,  we can apply Ehresmann's Fibration Theorem \cite{DiffTop, Ehresmann, HodgeTheory} to $\pr_1 : \I_{\opname{U}} \to \opname{U}$.
\begin{theorem}[Ehresmann's Fibration Theorem]
 Let $f: X\to B$ be a proper submersion between the $C^k$-manifolds $X$ and $B$. Then $f$ is a locally trivially fibration. \ie , for every point $b \in B$, there is a neighborhood $U_b$ of $b$ and a $C^k$-diffeomorphism $\psi_b: U_b\times f^{-1}(b) \xrightarrow{\sim} f^{-1}(U_b)$ such that $f\circ \psi_b=\pr_1$. Moreover, if  $Y \subseteq  X$ is a closed submanifold such that $f|_Y$ is still a submersion, then $f$ fibers $X$ locally trivial over $Y$ , \ie , the diffeomorphism $\psi_b$ above can be chosen to carry $U_b\times \left(f^{-1}(b)\cap Y\right)$ onto $f^{-1}(b)\cap Y$.
\end{theorem}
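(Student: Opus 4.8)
The plan is to prove this by the classical method of integrating horizontal lifts of vector fields; I indicate the main steps and suppress the routine verifications. Local triviality is a statement local on the base, so I fix $b\in B$ and choose a $C^k$-chart identifying a neighbourhood $U_b$ of $b$ with an open cube $\Omega=(-1,1)^n\subseteq\R^n$, where $n=\dim B$ and $b$ corresponds to the origin; it then suffices to trivialise $f$ over $U_b$. Since $f$ is a submersion, $\ker(df)$ is a $C^{k-1}$ subbundle of the restriction of $TX$ to $f^{-1}(U_b)$, of rank $\dim X-n$. Choosing any $C^{k-1}$ complement $H$ (for instance the orthogonal complement with respect to an auxiliary Riemannian metric on $X$), I obtain for each coordinate vector field $\partial/\partial x_i$ on $\Omega$ a unique horizontal lift $\tilde{v}_i$ on $f^{-1}(U_b)$, characterised by $df(\tilde{v}_i)=\partial/\partial x_i$ and $\tilde{v}_i\in H$.

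The crucial point, and the place where properness is used, is that the flows of the $\tilde{v}_i$ are complete enough to reach every point of $U_b$. A trajectory of $\tilde{v}_i$ projects under $f$ to a trajectory of $\partial/\partial x_i$, that is to a segment of a coordinate line in $\Omega$, which remains inside a compact subset $\overline{K}$ of $\Omega$ for all the times we need; if the horizontal trajectory escaped to infinity in finite time it would eventually leave every compact subset of $X$, yet it is confined to $f^{-1}(\overline{K})$, which is compact because $f$ is proper --- a contradiction. Hence the flow of each $\tilde{v}_i$ is defined on the required time interval. I then define $\psi_b\colon U_b\times f^{-1}(b)\to f^{-1}(U_b)$ by taking a point $p\in f^{-1}(b)$ and flowing it successively along $\tilde{v}_1,\dots,\tilde{v}_n$ by the amounts given by the coordinates $x_1,\dots,x_n$ of a point $x\in\Omega$ (equivalently, along the broken line from $0$ to $x$). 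One checks directly that $\psi_b$ is a $C^k$-map whose inverse is obtained by reversing the flows, so $\psi_b$ is a $C^k$-diffeomorphism, and that $f\circ\psi_b=\pr_1$ since each $\tilde{v}_i$ projects to $\partial/\partial x_i$. This establishes the first assertion.

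For the relative statement, suppose $Y\subseteq X$ is a closed submanifold with $f|_Y$ still a submersion. The point is to choose the complement $H$ so that, in addition, $H$ is tangent to $Y$ at every point of $Y$. This is possible because $\ker\bigl(d(f|_Y)\bigr)=\ker(df)\cap TY$ has locally constant rank along $Y$, so one can first build a $C^{k-1}$ complement to it inside $TY$ along $Y$, and then extend this to a global $C^{k-1}$ complement $H$ of $\ker(df)$ over $f^{-1}(U_b)$. With such a choice the lifts $\tilde{v}_i$ are tangent to $Y$ at each point of $Y$, hence their flows preserve $Y$, and therefore $\psi_b$ carries $U_b\times\bigl(f^{-1}(b)\cap Y\bigr)$ onto $f^{-1}(U_b)\cap Y$, as required.

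The step I expect to be the main obstacle is organising the completeness argument cleanly: the horizontal lifts generate only locally defined flows a priori, and one must use properness --- via the confinement of trajectories to sets of the form $f^{-1}(\overline{K})$ --- to extend them far enough to sweep out all of $U_b$; in the relative case one must additionally arrange a single $C^{k-1}$ distribution that is simultaneously complementary to $\ker(df)$ everywhere on $f^{-1}(U_b)$ and tangent to $Y$ along $Y$. Once these geometric ingredients are in place, the verification that $\psi_b$ is a diffeomorphism intertwining $f$ with $\pr_1$, and compatibly with $Y$, is routine.
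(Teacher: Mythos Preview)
Your argument is the standard proof of Ehresmann's theorem via horizontal lifts and flow completeness from properness, and it is correct in outline. Note, however, that the paper does not give its own proof of this statement: Ehresmann's Fibration Theorem is quoted as a classical result with references to \cite{DiffTop, Ehresmann, HodgeTheory} and then applied as a black box to the map $\pr_1:\mathfrak{I}_{\mathrm U}\to\mathrm U$. So there is nothing to compare against; your write-up simply supplies what the paper outsources to the literature.

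One small technical point worth tightening: if $f$ is only $C^k$, then $\ker(df)$ and hence your complement $H$ and the lifts $\tilde v_i$ are a priori only $C^{k-1}$, so the resulting $\psi_b$ is $C^{k-1}$ rather than $C^k$. This is harmless for the application in the paper (everything there is holomorphic, hence $C^\infty$), but if you want to match the $C^k$ conclusion as stated you should either work in the $C^\infty$ category or note the one-step loss of regularity.
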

We obtain that $\pr_1 : \I_{\opname{U}} \to \opname{U}$ is a locally trivially fibration and it fibers  $\pr_2^{-1}(X^\vee) \cap \I_{\opname{U}}$  locally trivial over $\opname{U}$, \ie, for any $\L\in \opname{U}$, there is a neighborhood $V_{\L}$ of $\L$ and a $C^\infty$-diffeomorphism $\psi_{\L} : V_{\L}\times \pr_1^{-1}(\L) \xrightarrow{\sim}\pr_1^{-1}(V_{\L})$ such that $\pr_1\circ \psi_{\L}=\pr_1$, moreover, $\psi_{\L}$ carries $V_{\L}\times \left(\pr_1^{-1}(\L)\cap \pr_2^{-1}(X^\vee)\cap \I_{\opname{U}}\right)$ onto $\pr_1^{-1}(V_{\L})\cap \pr_2^{-1}(X^\vee)\cap \I_{\opname{U}}$.
That is to say, we have a trivialization $(V_\mathbb{L},\psi_{\mathbb{L}})_{\mathbb{L}\in\opname{U}}$ of $\I_{\opname{U}} \xrightarrow{\opname{pr}_1} \opname{U}$ such that the restriction of the trivialization to $X^\vee$ also gives a trivialization $(V_{\mathbb{L}},\psi_{\mathbb{L}}^r)_{\mathbb{L}\in \opname{U}}$ of $\I_{\opname{U}, X^\vee} \xrightarrow{\opname{pr}_1}\opname{U}$.

On $V_{\mathbb{L}}$,  we have $\psi_{\mathbb{L}} : \I_{\opname{U}}|_{V_{\mathbb{L}}} \cong V_{\mathbb{L}} \times \P^1$ and $\psi_{\mathbb{L}}^r : \I_{\opname{U}, X^\vee}|_{V_{\mathbb{L}}} \cong V_{\mathbb{L}} \times \{t_1,\cdots, t_e\}$. 
When $(\L',  t_1',\cdots, t_e')$ continuously goes to $(\L, t_1,\cdots, t_e)$ on $V_{\L}$, the double cover $\tilde\L'$ branched at $\L'\cap X^\vee$ continuously goes to $\tilde \L$, which force $\F_{\L'}$ to move continuously to $\F_{\L}$ and the elementary vanishing cycle $\alpha_k'$ corresponding to $t_k'$ to move continuously to $\alpha_k$ corresponding to $t_k$. Hence,
$\varepsilon(\mathbb{L}')=\im \theta_{\mathbb{L}'*}$ continuously goes to $\varepsilon(\mathbb{L})=\im \theta_{\mathbb{L}*} $, which means $\varepsilon$ is continuous at $\L$. Since $\L$ is arbitrary, $\varepsilon$ is continuous.

Therefore, in order to prove $\varepsilon$ is continuous, we only need to show the above three conditions: \textbf{Cond 1}, \textbf{Cond 2} and \textbf{Cond 3}.
\medskip

\noindent\textit{Proof of \textbf{Cond 1.} }
As we know \cite{GH-Principles},  $\Gr=\left(1, \O_{X}\right)$ is a compact complex manifold and $\Gr=\opname{M}_{2\times (N+1)}(\C)/\GL_2(\C)$. Any $\L\in \Gr$ can be uniquely represented by a $2\times (N+1)$ matrix whose some $2 \times 2$ minor is the identity matrix.
For any $1\leq i_1<i_2\leq N+1$, denote by
\begin{equation*}
\begin{aligned}
U_{\{i_1,i_2\}}
=\left\{ \L\in \Gr \,\Big{|} \,
\begin{array}{@{}c@{}c@{}c@{}c@{}c@{}c@{}c@{}c@{}c@{}c@{}c}
     && & i_1&&&&i_2&&& \\
  \left. \begin{array}{c} \\  \end{array}
   \L= \right(
                     \begin{array}{c} * \\ * \end{array}
                    & \begin{array}{c} \cdots \\ \cdots \end{array}
                    & \begin{array}{c} * \\ * \end{array}
                    & \begin{array}{c} 1 \\ 0 \end{array}
                    & \begin{array}{c} * \\ * \end{array}
                         &\begin{array}{c} \cdots \\ \cdots \end{array}
                         & \begin{array}{c} * \\ * \end{array}
                       & \begin{array}{c} 0 \\ 1 \end{array}
                       & \begin{array}{c} * \\ * \end{array}
                       & \begin{array}{c} \cdots \\ \cdots \end{array}
                       & \begin{array}{c} * \\ * \end{array}
                   \left)\begin{array}{c} \\  \end{array}
                           \right.
                           \end{array}\right\} \cong \C^{2N-2},
\end{aligned}
\end{equation*}
then $\{U_{\{i_1,i_2\}}\}_{1\leq i_1<i_2\leq N+1}$ is an open cover of $\Gr$. Define
\noindent\begin{equation*}
\begin{aligned}
\varrho_{\{1,2\}} : U_{\{1,2\}} \times \P^1 & \to \pr_1^{-1}(U_{\{1,2\}})\\
\left(\small{\begin{pmatrix}1&0&a_2&\cdots&a_N\\ 0&1&b_2&\cdots & b_N\end{pmatrix}}, [a, b]\right) &\mapsto
\left(\small{\begin{pmatrix}1&0&a_2&\cdots&a_N\\ 0&1&b_2&\cdots & b_N\end{pmatrix}},
[a, b, aa_2+bb_2, \cdots, aa_N+bb_N]\right),
\end{aligned}
\end{equation*}
where $\pr_1: \I \to \Gr$. Clearly, $\varrho_{\{1,2\}}$ is biholomorphic. Similarly, we can define $\varrho_{\{i_1,i_2\}}$ for all $1\leq i_1<i_2\leq N+1$. Note that $\{U_{\{i_1,i_2\}}, \varrho_{\{i_1,i_2\}}\}_{\{1\leq i_1<i_2\leq N+1\}}$ is a local trivialization of $\pr_1$, thus $\I$ is a holomorphic $\P^1$-bundle over $\Gr$. Since  $\Gr$ is a compact complex manifold,  $\I$ is also a compact complex manifold. Note that $\pr_1$ is projective, thus it is proper.

Since $\opname{U}$ is an open subset in $\Gr$, then $\I_{\opname{U}}=\pr_1^{-1}(\opname{U})$ is a holomorphic $\P^1$-bunlde over $\opname{U}$. Therefore,  $\pr_1|_{\opname{U}} : \I_{\opname{U}} \to \opname{U}$ is a submersion and $\pr_1|_{\opname{U}} $ is also proper because $\pr_1$ is proper.

\medskip

\noindent\textit{Proof of \textbf{Cond 2.}}
Let $[X_0, \cdots, X_N]$ be the homogenous coordinate of $\O_{X}$ and
$$U^i=\left\{[X_0, \cdots, X_N]\in \O_{X}\, |\, X_i\neq 0\right\}, \quad i=0, \cdots N, $$
be the canonical open subsets of $\O_{X}$. Consider the following biholomorphism
\begin{equation*}
\begin{aligned}
&\varpi_i : \P^{N-1}\times U^i  \to \pr_2^{-1}(U^i)\\
&\left([Y_1, \cdots, Y_N], [X_0, \cdots, X_{i-1} ,1, X_{i+1} \cdots, X_N]\right)
\mapsto  \\
& \left(\begin{pmatrix}X_0&\cdots&X_{i-1}&1& X_{i+1} &\cdots & X_N\\
Y_1&\cdots&Y_i&0& Y_{i+1} &\cdots & Y_N\end{pmatrix},
[X_0, \cdots, X_{i-1} ,1, X_{i+1} \cdots, X_N]\right).
\end{aligned}
\end{equation*}
 It is clear that
$\left\{U^i, \varpi_i\right\}_{i=0}^N$ is a trivialization of $\pr_2$, then $\I$ is a holomorphic $\P^{N-1}$-bundle over $\O_{X}$. Since $\pr_2$ is projective, it is proper.

Since $X^\vee \subseteq  \O_{X}$ is a hypersurface, $\pr_2^{-1}(X^\vee)$ is a closed submanifold of $\I$ and then it is compact because $\I$ is compact. Therefore,  $\pr_2^{-1}(X^\vee) \cap \I_{\opname{U}}=\I_{\opname{U}}\times_{\O_{X}} X^\vee$ is a closed submanifold of $X^\vee$.

\medskip

\noindent\textit{Proof of \textbf{Cond 3.}}
Let $X^\vee=\left\{[X_0, \cdots, X_N]\in \O_{X}\, |\, F(X_0, \cdots, X_N)=0\right\}$, where $F$ is an irreducible homogenous polynomial of degree $e$ in $\C[X_0, \cdots, X_N]$.
 We have
\begin{equation*}
\begin{aligned}
F(a, b, aa_2+bb_2, \cdots, aa_N+bb_N)=(af_0-bg_0)\cdot\cdots\cdot(af_e-bg_e)=0
\end{aligned}
\end{equation*}
where ${\left({\begin{pmatrix}1&0&a_2&\cdots&a_N\\ 0&1&b_2&\cdots & b_N\end{pmatrix}},
[a, b, aa_2+bb_2, \cdots, aa_N+bb_N]\right)}\in \pr_1^{-1}(U_{\{1,2\}})\cap \pr_2^{-1}(X^\vee)$, $f_i:=f_i(a_2, \cdots, a_N)$ and $g_i:=g_i(b_2, \cdots, b_N) $, then there are exactly $e$ distinct points $P_i := [g_i, f_i] $, $i=1, \cdots, e$, on the fiber $\P^1$ over each point in $U_{\{1,2\}}$. Denote by
\begin{equation*}
\begin{aligned}
&V^i=\left\{\tiny{\left(\small{\begin{pmatrix}1&0&a_2&\cdots&a_N\\ 0&1&b_2&\cdots & b_N\end{pmatrix}},
[g_i, f_i, a_2g_i+b_2f_i, \cdots, a_Ng_i+b_Nf_i]\right)}\right.\\
&\left.  \in \pr_1^{-1}(U_{\{1,2\}})\cap \pr_2^{-1}(X^\vee)\, \big |\,\begin{pmatrix}1&0&a_2&\cdots&a_N\\ 0&1&b_2&\cdots & b_N\end{pmatrix}\in U_{\{1,2\}}\right\}\cong U_{\{1,2\}}, \quad i =1, \cdots, e,
\end{aligned}
\end{equation*}
which are distinct open subsets and $ \pr_1^{-1}(U_{\{1,2\}})\cap \pr_2^{-1}(X^\vee)=\bigcup\limits_{i=1}^e V^i$. Consequently,
$$\pr_1: \pr_2^{-1}(X^\vee) \cap \I_{\opname{U}} \to \opname{U}$$
 is a holomorphic unbranched covering map of degree $e$, so it is a submersion.

\medskip

\noindent\textbf{Proof of Lemma \ref{thm-top-3}.}  
We first prove the differential $d_{u_1}\theta$ at the base point $u_1$ is not zero in Section \ref{ss:diff} and then use this property to show $\theta_{*}$ is not zero in Section \ref{ss:nonzero}.

\subsubsection{Differential of the topological Abel--Jacobi map}\label{ss:diff}
Note that we have the following commutative diagram:
 \begin{equation*}
 \xymatrix{&(\tilde X, x_1^+)\ar[r]^-{\tilde\sigma}\ar[d]^{\tilde\p}
& (X, x_1)\ar[r]^{\a_X}\ar[d]^{\p}& J(X)\\
 (\F, u_1) \ar[r]^{\kappa}\ar@{.>}[ru]^{\widehat\p\circ\eta}&(\widetilde\L, t_1)\ar[r]^{\sigma}& (\L, t_1)},
 \end{equation*}
which means the lifting of $\kappa$ to $(\tilde X, x_1^+)$ exists, by the lifting criterion, we have
$$\pi_1(\F, u_1) \subseteq \pi_1(\tilde X, x_1^+).$$
Since $\tilde X$ is path-connected, $\pi_1(\tilde X, x_1^+) = \pi_1(\tilde X)$ is independent of the choice of the base point $x_1^+$, thus we can pick any point on the fiber $\tilde\p^{-1}=\left\{x_1^+, x_1^-, y_{1}, \cdots, y_{d-2}\right\}$ as the base point to lift $\kappa$, which gives us the following maps:
 \begin{equation*}
 \xymatrix{&(\tilde X,  x_1^{\pm})\ar[r]^-{\tilde\sigma}\ar[d]
& X\ar[r]^{\a_X}& J(X),&&(\tilde X,  y_i)\ar[r]^-{\tilde\sigma}\ar[d]
& X\ar[r]^{\a_X}& J(X)\\
 (\F, u_1) \ar[r]\ar@{-->}[ru]^{f^{\pm}}\ar@{.>}[rru]|>>>>>>>{\phi^{\pm}}\ar@{.>}[rrru]|>>>>>>>>>>>{\varphi^{\pm}}&(\widetilde\L, t_1)&&&
  (\F, u_1) \ar[r]\ar@{-->}[ru]^{f^{i}}\ar@{.>}[rru]|>>>>>>>{\phi^{i}}\ar@{.>}[rrru]|>>>>>>>>>>>{\varphi^{i}}&(\widetilde\L, t_1)},
 \end{equation*}
where $i=1, \cdots, d-2$.  Clearly, $\theta=\varphi^+ - \varphi^-$. Note that  $\deg \tilde\sigma=2$ and $\phi^\lambda$, $\lambda=+, -, 1, \cdots, d-2$ are branched covering maps with branched loci inside of the branched locus $\{y_{1}, \cdots, y_{d-2}\}$ of $\tilde\sigma$. 
We obtain that, for any $i\in \{1, \cdots, d-1\}$,  $u_1$ is a ramification point of $\phi^i$ with ramification index 2, thus there is a small open neighborhood $U_{i}$ of $u_1$ in $\F$ with coordinate $z$ ($u_1$ corresponds to $z=0$) such that  $\phi^i$ is locally given by $\phi^i(z)=z^2$.  Therefore,
$$d_{u_1}\phi^i=\frac{\partial\phi^i}{\partial z}|_{z=0}=2z|_{z=0}=0,$$
 and then $d_{u_1}\varphi^i=0$ for all $i=1, \cdots, d-2$.

However, $d_{u_1} \phi^+\neq 0$ and $d_{u_1} \phi^-\neq 0$. Indeed, since $\phi^+(u_1)=x_1^+$ is an unbranched point of $\phi^+$, there is a small open neighborhood $U_+$ of $u_1$ in $\F$ with coordinate $z$ such that $\phi^+(z)=z$ over $U_+$, so $d_{u_1}\phi^+=1\neq 0$. Similarly, $d_{u_1} \phi^-\neq 0$.

 Moreover, since $\a_X : X \to J(X)$ is an immersion, $d_{x_1}\a_X$ is injective. We obtain that
 $$d_{u_1}\varphi^{\pm}=d_{u_1} ( \a_X\circ\phi^{\pm})=(d_{x_1} \a_X)\circ( d_{u_1}\phi^{\pm} )\neq 0.$$

Let $\varphi=\varphi^++\varphi^-+\varphi^1+\cdots + \varphi^{d-2}$. 
For any $w_1\in \F$, we have
 $$\varphi(w_1)=\a_X\left(\phi^+(w_1)+\phi^-(w_1)+\phi^1(w_1)+\cdots + \phi^{d-2}(w_1)\right).$$
Note that $f^\lambda(w_1)$, $\lambda=+, -, 1, \cdots, d-2$, are points on the same hyperplane section $\tilde\p^{-1}(\kappa({w_1}))$ of  $\tilde X$ and they are distinct by the lifting criterion, so $\left\{f^\lambda(w_1)\right\}_{\lambda = +, -, 1, \cdots, d-2}$
is a permutation of all points in  $\tilde\p^{-1}(\kappa({w_1}))$ and then $\left\{\phi^\lambda(w_1)\right\}_{\lambda = +, -, 1, \cdots, d-2}$ is the hyperplane section $\p^{-1}\left(\sigma\kappa(w_1))\right)$
of $X$ over $\sigma\kappa(w_1)$. Since all hyperplane sections of $X$ come from the same very ample line bundle $\mathcal{L}$,  thus
$$\phi^+(w_1)+\phi^-(w_1)+\phi^1(w_1)+\cdots + \phi^{d-2}(w_1), \quad w_1\in \F,$$
are linearly equivalent. By Abel's Theorem,  $\varphi$ is constant, so $d_{w_1} \varphi =0$ for all $w_1 \in \F$.  In particular, $d_{u_1}\varphi=0$.
Since  
 $d_{u_1}\varphi^i=0$ for all $i=1, \cdots, d-2$,  we have
$$d_{u_1}\varphi^++d_{u_1}\varphi^-=0,$$
thus $d_{u_1}\theta=d_{u_1}\varphi^+-d_{u_1}\varphi^-=2d_{u_1}\varphi^+\neq 0.$

\subsubsection{Proof of the nonzero property}\label{ss:nonzero}

For any algebraic curve $C$, the albanese variety denoted by $\opname{Alb}(C)$ is exactly the Jacobian variety $J(C)$, so we have $\opname{Alb}(\F)=J(\F)$. By the universal property of the Albanese variety, there is a unique holomorphic map (up to a translation) $\chi: J(\F)\to J(X)$ such that $\theta=\chi\circ \a_{\F}$, \ie, the following diagram commutes
\begin{equation*}
\xymatrix{\F\ar[d]_{\a_{\F}}\ar[r]^{\theta}& J(X)\\
J(\F)\ar[ur]_{\chi}}.
\end{equation*}
Since $\a_{\F*}: H_1(\F,\Z)\to H_1(J(\F), \Z)$ is an isomorphism, in order to show  $\theta_{*}=\chi_{*}\circ \a_{\F*}\neq 0$, we only need to show $\chi_{*}\neq 0$.

We can ignore the translation in our discussion, since the differential of a translation is zero and the translation induces a zero map on the homology by the following proposition.
\begin{proposition}\label{translation}
Let $M$ be a complex manifold and $T=\C^g/\Lambda$ be a complex torus with the lattice $\Lambda$ generated by $\omega_1, \cdots, \omega_{2g}$, and let $\varphi : M \to T$ be a morphism. If $\varphi$ is constant, \ie, it is a translation on $T$, then $\varphi_{*}=0 : H_k(M,\Z) \to H_k(T,\Z)$ for all $k\in \Z$.
\end{proposition}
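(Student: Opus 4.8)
\medskip

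\noindent\textbf{Proof proposal.} The plan is to record that a constant map carries no topological information in positive degrees, and to present this in the two equivalent forms in which it is used (the second also yielding the dual statement $\varphi^{*}=0$ that absorbs the "up to a translation'' ambiguity in the Albanese factorization $\theta=\chi\circ\a_{\F}$). Throughout, since the statement is visibly false in degree $0$ for $M\ne\emptyset$, the assertion is to be read in the range $k\ge 1$ relevant to its application, namely to kill the contribution of the constant map $\varphi=\varphi^{+}+\varphi^{-}+\varphi^{1}+\dots+\varphi^{d-2}$ on $H_{1}$; equivalently one may use reduced coefficients, for which $\widetilde H_{0}$ of a point vanishes.

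First, the quick route: since $\varphi$ is constant, say with image $\{t_{0}\}$, it factors as $M\to\{t_{0}\}\hookrightarrow T$, and after composing with the translation $\tau_{-t_{0}}\colon T\to T$ (a homeomorphism, hence an isomorphism on homology) we may take $t_{0}=0$. Then $\varphi_{*}$ factors through $H_{k}(\{0\},\Z)=0$ for every $k\ge 1$, so $\varphi_{*}=0$ in those degrees. Second, the cohomology–ring route, needed because the proposition is also invoked to say that replacing $\chi$ by a translate changes neither $\chi^{*}$ nor $\chi_{*}$: recall that modulo torsion $H^{\bullet}(T,\Z)$ is the exterior algebra $\bigwedge^{\bullet}H^{1}(T,\Z)$, with $H^{1}(T,\Z)\cong\Hom(\Lambda,\Z)$ and cup product giving isomorphisms $\bigwedge^{k}H^{1}(T,\Z)\xrightarrow{\sim}H^{k}(T,\Z)$. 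Now $\varphi^{*}\colon H^{1}(T,\R)\to H^{1}(M,\R)$ vanishes, because every real degree–one class on $T$ is represented by a translation–invariant $1$–form, whose pullback along the constant map $\varphi$ is identically zero; working modulo torsion this gives $\varphi^{*}=0$ on $H^{1}(T,\Z)$. Since $\varphi^{*}$ is a ring homomorphism and each $H^{k}(T,\Z)$ with $k\ge 1$ is generated by $k$–fold cup products of degree–one classes, $\varphi^{*}=0$ on $H^{k}(T,\Z)$ for all $k\ge 1$; dualizing under the (torsion–free) universal–coefficients identification yields $\varphi_{*}=0$ on $H_{k}(M,\Z)$ for all $k\ge 1$ as well.

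The main obstacle: there is essentially none — the statement is elementary, and the argument is a two–line factorization supplemented by the standard description of the cohomology ring of a torus. The one point that genuinely requires care is the degree convention, since a constant map is of course not the zero map on $H_{0}$; in its application the proposition is only used in degree $1$ (to conclude $\theta_{*}=(\varphi^{+})_{*}-(\varphi^{-})_{*}$, with the constant map $\varphi$ contributing nothing), so this causes no difficulty.
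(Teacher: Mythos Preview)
Your proposal is correct. The paper takes a slightly different route: it writes down an explicit straight-line homotopy $\Phi(m,t)=t z_{0}$ from the constant map at $0\in T$ to $\varphi$, using the additive structure of the torus, and concludes that $\varphi$ is null-homotopic, hence $\varphi_{*}=0$.

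Your first argument---factor the constant map through a one-point space and use $H_{k}(\mathrm{pt})=0$ for $k\ge 1$---is both shorter and more general: it uses nothing about $T$ being a torus, whereas the paper's homotopy is really just a proof that $T$ is path-connected, after which one is back to the same factorization-through-a-point step anyway. Your second argument via the exterior-algebra description of $H^{\bullet}(T,\Z)$ is overkill for the proposition as stated, but it does buy something the paper's proof does not make explicit: the dual vanishing $\varphi^{*}=0$ on $H^{k}(T,\Z)$ for $k\ge 1$, which is what one actually needs to absorb the ``up to translation'' ambiguity in the Albanese factorization. You are also right to flag the $k=0$ caveat; the paper simply asserts the conclusion ``for all $k\in\Z$'' without comment, but only uses it in degree $1$.
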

\begin{proof}
 Since $\varphi$ is constant, there is a point $z_0\in T$ with $z_0=x_1\omega_1+\cdots+x_{2g}\omega_{2g}$, $x_i\in [0,1)$, $i=1, \cdots, 2g$, such that $\varphi=z_0$. Denote by $I=[0,1]$ and define
 \begin{equation*}
 \begin{aligned}
 \Phi : M \times I &\to T\\
 (m,t) &\mapsto tz_0,
 \end{aligned}
 \end{equation*}
where $tz_0=tx_1\omega_1+\cdots+tx_{2g}\omega_{2g}$ and $tx_i\in [0,1)$. Clearly, by the definition of continuity in term of limit, $\Phi$ is continuous, so $\Phi$ is a homotopy. Since $\Phi(m,0)=0$ and $\Phi(m,1)=\varphi(m)$, thus $\varphi$ is homotopic to $0$, so $\varphi_{*}=0$.
\end{proof}

Denote by $J(X)=\C^{g(X)}/{\Lambda_{X}}$ and $J(\F)=\C^{g(\F)}/{\Lambda_{\F}}$, where $g(X), g(\F)$ are the genus and $\Lambda_X, \Lambda_{\F}$ are  lattices of $\C^{g(X)},\C^{g(\F)}$ respectively.
Then $\Lambda_{X}=H_1(J(X), \Z)=H_1(X, \Z)$ and $\Lambda_{\F}=H_1(J(\F),\Z)=H_1(\F, \Z)$.
Since $\chi$ is a holomorphic map between two complex tori, it lifts to an affine linear transformation
\begin{equation*}
\begin{aligned}
\tilde\chi :& \C^{g({\F})} \to \C^{g({X})}\\
& z\mapsto Az+b,
\end{aligned}
\end{equation*}
where $A$ is a constant $g(X)$ by $g(\F)$ matrix and $b$ is a constant $g(X)$-column vector. We obtain that $d\chi=A$ is constant and then $d\theta=(d\chi)\circ(d \a_{\F})=A(d \a_{\F})$.
Since $d_{u_1}\theta\neq 0$,  we get $A\neq 0$.
Note that the induced map $\chi_{*}$ of $\chi$ on homology maps $\Lambda_{\F}$ to $\Lambda_{X}$ and also preserves the zero vector, thus it should be $A$, \ie, $\chi_{*}=A \neq 0$.

\subsection{Proof of Lemma \ref{lem-top-2}}\label{s:stable}
In this section, $S$ is of dimension $2n$.
Consider the monodromy representation
$$ \rho_S : \pi_1(\L^{\sm}_S, r_0) \to \opname{Aut}\left(H_{2n-1}^{\van}(X, \Z), <,>\right),$$
where $<,>$ denotes the intersection pair.  More generally, we prove the following observation about the stable  $\Z$-submodules and Lemma \ref{lem-top-2} follows from this theorem for $n=1$.
\begin{theorem}\label{stable}
The stable  $\Z$-submodules of $H_{2n-1}^{\van}(X,\Z)$ under the monodromy action $\rho_S$
are $$mH_{2n-1}^{\van}(X,\Z), \qquad m\in \Z.$$
\end{theorem}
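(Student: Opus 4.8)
\noindent\textbf{Proof strategy for Theorem~\ref{stable}.}
Write $H:=H_{2n-1}^{\van}(X,\Z)$ (modulo torsion), with intersection form $\langle\,,\,\rangle$; since $\dim X=2n-1$ is odd this form is skew-symmetric. Let $\delta_1,\dots,\delta_e\in H$ be the vanishing cycles of the Lefschetz pencil $\L_S$, so that the image of $\rho_S$ is generated by the Picard--Lefschetz transvections $T_{\delta_k}\colon x\mapsto x+\langle x,\delta_k\rangle\delta_k$. The plan rests on three classical properties of Lefschetz pencils \cite{Voisin2}: (i) $H\otimes\Q$ is an irreducible $\pi_1(\L_S^{\sm},r_0)$-module (which in fact follows from (iii) below together with the nondegeneracy of $\langle\,,\,\rangle$ over $\Q$); (ii) the $\delta_k$ generate $H$ over $\Z$; (iii) $\pi_1(\L_S^{\sm},r_0)$ acts transitively on $\{\pm\delta_1,\dots,\pm\delta_e\}$. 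Given a stable $\Z$-submodule $M\subseteq H$, property (i) shows that either $M\otimes\Q=0$, whence $M=0=0\cdot H$ because $H$ is torsion-free, or $M$ is a full-rank sublattice of $H$; I would treat the first case at once and assume the second henceforth.

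I would then produce the integer $m$. As $M$ has full rank and $\langle\,,\,\rangle$ is nondegenerate over $\Q$, the ideal $\{\langle x,\delta_1\rangle:x\in M\}\subseteq\Z$ is nonzero; let $m>0$ be its generator. Using (iii) together with the $\rho_S$-invariance of $\langle\,,\,\rangle$ and the $\rho_S$-stability of $M$, one checks that $\{\langle x,\delta_k\rangle:x\in M\}=m\Z$ for \emph{every} $k$. Picking $x_k\in M$ with $\langle x_k,\delta_k\rangle=m$ and applying Picard--Lefschetz, $m\,\delta_k=T_{\delta_k}x_k-x_k\in M$; by (ii) this gives $mH=\sum_k\Z\,(m\delta_k)\subseteq M$. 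For the reverse inclusion, take $x\in M$: Picard--Lefschetz gives $\langle x,\delta_k\rangle\,\delta_k=T_{\delta_k}x-x\in M$, so $m\mid\langle x,\delta_k\rangle$ for all $k$; hence $\langle x/m,\delta_k\rangle\in\Z$, and by (ii) again $\langle x/m,h\rangle\in\Z$ for all $h\in H$, i.e.\ $x/m$ lies in the dual lattice $H^{\ast}:=\{v\in H\otimes\Q:\langle v,H\rangle\subseteq\Z\}$.

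Everything now reduces to the lattice-theoretic statement that $\langle\,,\,\rangle$ is \emph{unimodular} on $H$, i.e.\ $H^{\ast}=H$; granting this, $x/m\in H$ for every $x\in M$, so $M\subseteq mH$, and combined with the previous paragraph $M=mH$ --- which is the assertion of Theorem~\ref{stable}, and for $n=1$ of Lemma~\ref{lem-top-2}. To prove the unimodularity I would start from Poincar\'e duality on $X$, which makes $\langle\,,\,\rangle$ unimodular on $H^{2n-1}(X,\Z)$ modulo torsion, together with the Lefschetz hyperplane theorem, which makes $i^{\ast}\colon H^{2n-1}(S,\Z)\to H^{2n-1}(X,\Z)$ injective modulo torsion with torsion-free cokernel; the projection formula $\langle i^{\ast}a,v\rangle=\int_S a\cup i_{\ast}v=0$ for $v\in H$ shows $i^{\ast}H^{2n-1}(S,\Z)\perp H$, and a rank count using the rational orthogonal decomposition \eqref{eqn-summand-1} exhibits a finite-index orthogonal sublattice $H\oplus i^{\ast}H^{2n-1}(S,\Z)\subseteq H^{2n-1}(X,\Z)$.

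The hard part will be this last step: upgrading ``finite-index orthogonal sublattice'' to ``orthogonal direct summand'', equivalently showing that the fixed part $i^{\ast}H^{2n-1}(S,\Z)$ itself carries a unimodular form, so that both summands of the unimodular lattice $H^{2n-1}(X,\Z)$ are unimodular. This is automatic in the case of chief interest here --- when $H^{2n-1}(S,\Z)$ vanishes modulo torsion, so that $H=H^{2n-1}(X,\Z)$ is the full unimodular middle lattice --- while in general it rests on analysing the discriminant of the polarized lattice $H^{2n-1}(S,\Z)$ equipped with the form $(a,b)\mapsto\int_S a\cup b\cup[X]$; pinning down this discriminant is the one genuinely new ingredient the argument requires, and I expect it to be the principal obstacle.
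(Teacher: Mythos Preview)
Your argument is correct and arrives at the same conclusion as the paper's, but by a cleaner lattice-theoretic route. The paper proceeds more concretely: it asserts that the vanishing cycles can be rearranged into a symplectic $\Z$-basis $\{\delta_i,\gamma_i\}_{i=1}^{\ell}$ of $H^{\van}_{2n-1}(X,\Z)$, writes an arbitrary nonzero $\alpha\in V$ in coordinates, applies Picard--Lefschetz twice to force $a_iH,\,b_iH\subseteq V$ for each coefficient, and then runs a gcd-descent (if $V\neq d_\alpha H$, pick $\beta\in V\setminus d_\alpha H$, produce $d_\gamma=\gcd(d_\alpha,d_\beta)<d_\alpha$ with $d_\gamma H\subseteq V$, and iterate). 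Your approach --- define $m$ as the positive generator of the ideal $\{\langle x,\delta_1\rangle:x\in M\}$, propagate it to every $\delta_k$ via conjugacy of the vanishing cycles, and then compare $M$ with the dual lattice $H^{\ast}$ --- avoids the descent entirely and makes the role of the intersection form more transparent.

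Both arguments rest on the unimodularity of $\langle\,,\,\rangle$ on $H^{\van}_{2n-1}(X,\Z)$. The paper simply asserts this (``since $\langle,\rangle$ is unimodular'') and moves on, whereas you correctly isolate it as the one substantive ingredient and sketch how it should follow from Poincar\'e duality, the Lefschetz hyperplane theorem, and an integral refinement of the orthogonal splitting~\eqref{eqn-summand-1}. Note also that the paper's formulation presumes that the vanishing cycles themselves form, after reordering, a symplectic basis --- a claim that is stronger than mere unimodularity and not obviously true when the number of critical values exceeds $\operatorname{rk} H$; your hypotheses (generation plus transitivity) are the more robust ones here.
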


\begin{proof}
Let $V$ be a nonzero stable $\Z$-submodule of $H_{2n-1}^{\van}(X, \Z)$ under the monodromy action $\rho$, then we only need to show $V=mH_{2n-1}^{\van}(X, \Z)$ for some nonzero integer $m$.

Denote by $\L^{\sg} :=\L-\L^{\sm}$ parametrizing all singular hyperplane sections of $X$. By the Riemann--Hurwitz formula, $\deg \L^{\sg}$ is even and then write $\L^{\sg} :=\{t_1, \cdots, t_{2\ell}\}$. Let $\delta_i, \gamma_i$ be the elementary vanishing cycles corresponding to $t_{2i-1}, t_{2i}$ seperately for $i=1, 2,  \cdots, \ell$. 
We have \cite[Lemma 2.26]{Voisin2}
$$H_{2n-1}^{\van}(X, \Z)=\bigoplus\limits_{i=1}^\ell\left( \Z\delta_{i}\oplus \Z\gamma_{i}\right).$$

Since $2n-1=odd$, the intersection form $<,>$ is skewsymmetric, thus $<\delta_i, \delta_i>=0=<\gamma_i, \gamma_i>$ for all $i=1, \cdots, \ell$.  And since $<,>$ is unimodular, by rearranging $\{\delta_i, \gamma_i\}_{i=1}^\ell$, we can set $<\delta_i, \gamma_j>=\delta_{ij}$ and $<\delta_i, \delta_j>=0=<\gamma_i, \gamma_j>$.

For $k=1, \cdots, 2\ell$, let $\Delta_k$ be a small disk around $t_k$ and pick a point $r_k$ on the boundary of $\Delta_k$.
 For $i\in \{1, \cdots, \ell\}$, draw simple paths $\xi_i, \zeta_i$ in $\L^{\sm}_S$ connecting $r_0$ with $r_{2i-1}, r_{2i}$, respectively. Let $\tilde\xi_i$ ($\tilde\zeta_i$) be the loop in $\L^{\sm}_S$ based at $r_0$ which is equal to $\xi_i$ ($\zeta_i$) until $r_{2i-1}$ ($r_{2i}$), winds around the disk $\Delta_{2i-1}$ ($\Delta_{2i}$) once in the positive direction, and then returns to $r_0$ via $\xi_{2i-1}^{-1}$ ($\xi_{2i}$).

Pick a nonzero element $\alpha=\sum_{i=1}^\ell(a_i\delta_i+b_i\gamma_i)\in V$ with $a_i, b_i\in \Z$.

For $i\in \{1, \cdots, \ell\}$,  applying the Picard-Lefschetz formula, we have
$$ \rho(\tilde\zeta_i)(\alpha)=\alpha\pm <\alpha, \gamma_i>\gamma_i=\alpha \pm a_i\gamma_i, $$
then $a_i\gamma_i =\pm\left(\rho(\tilde\zeta_i)(\alpha) -\alpha\right) \in V$. Applying  the Picard-Lefschetz formula again, we have
  $$\rho(\tilde\xi_i)(a_i\gamma_i)=a_i\gamma_i\mp a_i\delta_i, $$
then $a_i\delta_i =\mp\left(\rho(\tilde\xi_i)(a_i\gamma_i) -a_i\gamma_i\right) \in V$. Therefore, $a_i H_{2n-1}^{\van}(X,\Z) \subseteq V$.
Similarly,  we have $b_i H_{2n-1}^{\van}(X,\Z) \subseteq V$.

Denote by $d_\alpha=\gcd(a_1,b_1,\cdots, a_\ell,b_\ell)\geq 1$, then there are integers $u_1, v_1, \cdots, u_\ell, v_\ell$ such that $d_\alpha=\sum\limits_{i=1}^\ell(u_ia_i+v_ib_i)$. For all $k=1, \cdots, \ell$, we have
\begin{equation*}
\begin{aligned}
d_\alpha\delta_k=\sum\limits_{i=1}^g\left(u_ia_i\delta_k+v_ib_i\delta_k\right) \in V,\\
d_\alpha\gamma_k=\sum\limits_{i=1}^g\left(u_ia_i\gamma_k+v_ib_i\gamma_k\right) \in V.
\end{aligned}
\end{equation*}
Therefore, $d_\alpha H_{2n-1}^{\van}(X,\Z)\subseteq V$.
If $V\neq d_\alpha H_{2n-1}^{\van}(X,\Z)$, there is
 $$\beta=\sum\limits_{i=1}^g(e_i\delta_i+f_i\gamma_i) \in V\setminus d_\alpha H_{2n-1}^{\van}(X,\Z), \qquad e_i, f_i \in \Z.$$
 Denote by $d_\beta=\gcd(e_1, f_1, \cdots, e_g, f_g)$, then $d_\alpha\nmid d_\beta$ because $\beta \not\in d_\alpha H_{2n-1}^{\van}(X,\Z)$. Applying the above calculation to $\beta$ gives us
 $d_\beta H_{2n-1}^{\van}(X,\Z) \subseteq V$, thus $\gamma:=d_\alpha\delta_1+d_\beta \gamma_1 \in V$ and $d_\gamma =\gcd (d_\alpha, d_\beta)<d_\alpha$. Again, by the  above calculation, we obtain $d_\gamma H_{2n-1}^{\van}(X,\Z) \subseteq V$.
 Therefore, we have
 $$d_\alpha H_{2n-1}^{\van}(X,\Z) \subsetneq d_\gamma H_{2n-1}^{\van}(X,\Z) \subseteq V\subseteq H_{2n-1}^{\van}(X,\Z) \qquad \text{with}\quad d_\alpha > d_\gamma \geq 1 \, \text{ and }\, d_\gamma | d_\alpha.$$
 Replacing $\alpha$ by $\gamma$, continue to do the above analysis. Since each process will give us a smaller positive integer, by finite many processes, we will get $V= mH_{2n-1}^{\van}(X,\Z)$ for some positive integer $m$, which is desired.
\end{proof}


\section{Jacobi-type Inversion Theorem}\label{sec-Jacobi}
As an application of our construction, we prove the Jacobi-type Inversion theorem \ref{thm-Jacobi}. Let $S$ is of dimension $2$.
By Lemma \ref{lem-top-1}, we can fix the deformation space $\F$ for all elementary vanishing cycles.
By the analysis in Section \ref{s:restate}, we have the following commutative diagrams
  \begin{equation*}
\xymatrix{&\left(\tilde{X}\times_{\tilde{\L}}\tilde{X}\setminus\Delta, (x_i^+, x_i^-)\right)\ar[d]_{\opname{q}} \ar[r]^>>>>>{\xi_i} &X^+\times X^- \ar[r]^{\a_X}& J(X)\\
(\F, s_i)\ar[r]^{\kappa_i}\ar@{.>}[ru]^{\eta_i}\ar@{.>}[rrru]|-{\theta_i}&(\tilde{\L}, t_i)}
\end{equation*}
and
 \begin{equation*}
 \xymatrix{&(\tilde X, x_i^{\pm})\ar[r]^-{\tilde\sigma_i}\ar[d]
& (X, x_i)\ar[r]^{\a_X}& J(X)\\
 (\F, s_i) \ar[r]\ar@{-->}[ru]^{f_i^{\pm}}\ar@{.>}[rru]|>>>>>>>{\phi_i^{\pm}}\ar@{.>}[rrru]|>>>>>>>>>>>{\varphi_i^{\pm}}&(\widetilde\L, t_i)},
 \end{equation*}
where $i=1, \cdots, d-2$.  Clearly, $\theta_i=\varphi_i^+ - \varphi_i^-$ and
$$d_{s_i}\theta_i=d_{s_i}\varphi_i^+-d_{s_i}\varphi_i^-=2d_{s_i}\varphi_i^+\neq 0.$$

Denote by $H^0(X, \Omega)=\bigoplus\limits_{k=1}^g \C\omega_k$.
We have the following holomorphic map:
\begin{equation*}
\begin{aligned}
\vartheta:=\sum\limits_{i=1}^{e}\theta_{i} : \quad \prod_{i=1}^{e} \F &\to J(X)\cong \C^g/\Lambda\\
(r_i)_{i=1}^e &\mapsto  \sum\limits_{i=1}^{e} \left(\varphi_i^+(r_i)-\varphi_i^-(r_i)\right)&=\left(\begin{array}{ccc}
 \sum\limits_{i=1}^e\int_{\phi_{i}^-(r_i)}^{\phi_{i}^+(r_i)}\omega_1&\cdots& \sum\limits_{i=1}^e\int_{\phi_{i}^-(r_i)}^{\phi_{i}^+(r_i)}\omega_g\end{array}\right).
\end{aligned}
\end{equation*}
Note that $\phi_{i}^+(s_i)=\phi_{i}^-(s_i)=x_i$ is the ramification point, then $\vartheta(s)=0$ and the Jacobian of $\vartheta$ at  $s:=(s_i)_{i=1}^e$ is
\begin{equation*}
\begin{aligned}
J(\vartheta)|_{s}=\left(\begin{array}{ccccc}
 \frac{\omega_1(x_1)}{dz_1}d_{s_1}\theta_{1}& \cdots& \frac{\omega_1(x_i)}{dz_i}d_{s_i}\theta_{i}&\cdots& \frac{\omega_1(x_e)}{dz_e}d_{s_e}\theta_{e}\\
 \vdots & &\vdots&& \cdots\\
  \frac{\omega_g(x_1)}{dz_1}d_{s_1}\theta_{1}& \cdots& \frac{\omega_g(x_i)}{dz_i}d_{s_i}\theta_{i}&\cdots& \frac{\omega_g(x_e)}{dz_e}d_{s_e}\theta_{e}
 \end{array}\right)_{g\times e},
\end{aligned}
\end{equation*}
where $z_i$ denotes the local coordinate around $x_i$ and $\omega_j=f_j(z_i)dz_i$ for some holomorphic map $f_j(z_i)$ locally around $x_i$. In fact, $  \frac{\omega_j(x_i)}{dz_i}=f_j(0)$.

Since $d_{s_i}\theta_{i}\neq 0$,  the rank of $J(\vartheta)|_s$ is the same as the following matrix
\begin{equation*}
\begin{aligned}
M(\vartheta, s)=\left(\begin{array}{ccccc}
 \frac{\omega_1(x_1)}{dz_1}& \cdots& \frac{\omega_1(x_i)}{dz_i}&\cdots& \frac{\omega_1(x_e)}{dz_e}\\
 \vdots & &\vdots&& \cdots\\
  \frac{\omega_g(x_1)}{dz_1}& \cdots& \frac{\omega_g(x_i)}{dz_i}&\cdots& \frac{\omega_g(x_e)}{dz_e}
 \end{array}\right)_{g\times e}.
\end{aligned}
\end{equation*}
If $\opname{rank} J(\vartheta)|_{s} = \opname{rank} M(\vartheta, s) <g$, then the row vectors of $M(\vartheta, s)$ are $\C$-linear dependent, \ie, there are $a_1, \cdots, a_g \in \C$ which are not all 0, such that
\begin{equation*}
\begin{aligned}
\frac{\left(\sum\limits_{j=1}^ga_j\omega_j\right)(x_i)}{dz_i}=a_1 \frac{\omega_1(x_i)}{dz_i}+\cdots +a_g\frac{\omega_g(x_i)}{dz_i}=0 ,  i=1, \cdots, e
\end{aligned}
\end{equation*}
We obtain $x_i \in Z(\sum\limits_{j=1}^ga_j\omega_j)$ for all $i=1, \cdots, e$, \ie, $\opname{div}\left(\sum\limits_{j=1}^ga_j\omega_j\right) \geq \sum\limits_{i=1}^e x_i$, then
$$0\neq \sum\limits_{j=1}^ga_j\omega_j \in H^0(X, \Omega(- \sum\limits_{i=1}^e x_i)).$$
 However, $\deg (K-\sum\limits_{i=1}^e x_i))=2g-2-e=2g-2-2(d+g-1)=-2d<0$, which means $H^0(X, \Omega(- \sum\limits_{i=1}^e x_i))=0$.
We get a contradiction. Therefore, $\opname{rank} J(\vartheta)|_s=g$. 
Assume that the first $g$ column vectors in $ J(\vartheta)|_s$ are $\C$-linear independent, then $J(\widetilde\theta)|_s$ is invertible, where
\begin{equation*}
\begin{aligned}
\widetilde\theta:=\sum\limits_{i=1}^g\theta_{i} : \quad \prod_{i=1}^g \F &\to J(X)\cong \C^g/\Lambda\\
(r_i)_{i=1}^g &\mapsto  \sum\limits_{i=1}^g \theta_{i}(r_i)&=\left(\begin{array}{ccc}
 \sum\limits_{i=1}^g\int_{\phi_{i}^-(r_i)}^{\phi_{i}^+(r_i)}\omega_1&\cdots& \sum\limits_{i=1}^g\int_{\phi_{i}^-(r_i)}^{\phi_{i}^+(r_i)}\omega_g\end{array}\right).
\end{aligned}
\end{equation*}
By the following proposition, we know $\widetilde\theta$ is surjective, which proves the Jacobi-type Inversion Theorem \ref{thm-Jacobi}.


\begin{proposition}\label{prop}
Let $f: M \to N$ be a holomorphic map between two complex manifold, where $M$ is compact and $N$ is connected. If $\dim M \geq \dim N$ and $\opname{rank} J(f)|_x = \dim N$ for some $x\in M$, then $f$ is surjective.
\end{proposition}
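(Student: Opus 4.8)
The plan is to deduce surjectivity from Remmert's proper mapping theorem. Since $M$ is compact the map $f$ is proper, so $Y:=f(M)$ is a closed analytic subset of $N$. The rank hypothesis says that $df_x\colon T_xM\to T_{f(x)}N$ is surjective; by the holomorphic submersion theorem $f$ is therefore an open map on a neighbourhood of $x$, so $Y$ contains an open subset of $N$ around $f(x)$, and in particular the local dimension $\dim_{f(x)}Y$ equals $n:=\dim N$.

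Next I would run a connectedness argument on the locus $W:=\{\,y\in Y:\dim_y Y=n\,\}$. This set is closed in $N$ by the upper semicontinuity of the local dimension of an analytic set, and it is nonempty since it contains $f(x)$. It is also open: if $y\in W$ but $Y$ did not contain a neighbourhood of $y$, then near $y$ the set $Y$ would be a proper analytic subset of the manifold $N$, hence contained in the zero set of some holomorphic function $g\not\equiv 0$, which has pure local dimension $n-1$; this would force $\dim_y Y\le n-1$, contradicting $y\in W$. Thus $W$ is open, closed and nonempty, and since $N$ is connected we get $W=N$; in particular $f(M)=Y\supseteq W=N$, which is the claim.

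The only substantial ingredient is Remmert's theorem that the image of a proper holomorphic map is analytic, and it is genuinely essential, since the smooth analogue fails (the orthogonal projection of a round $2$-sphere in $\R^3$ onto a plane is a proper smooth map which has full rank at many points yet whose image is a closed disc). Everything else is a routine fact from local complex-analytic geometry, so I do not expect a real obstacle; I note only that in the application of interest $\F$ is a compact Riemann surface, so $f=\widetilde\theta$ is in fact a morphism of smooth projective varieties and one could equally invoke properness in the algebraic category together with the fibre-dimension theorem.
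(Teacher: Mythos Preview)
Your proof is correct and follows essentially the same route as the paper: both invoke Remmert's proper mapping theorem to get that $f(M)$ is a closed analytic subset of $N$, use the rank hypothesis to produce an open set inside $f(M)$, and then conclude $f(M)=N$ from connectedness of $N$. The only cosmetic difference is in the last step: the paper phrases it as ``$N$ is irreducible (being a connected manifold) and $f(M)$ is a closed analytic subset containing a nonempty open set, hence $f(M)=N$'', whereas you run an explicit open--closed argument on the locus $W=\{y:\dim_y Y=n\}$; these are two formulations of the same identity principle for analytic sets.
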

\begin{proof}
Since $f$ is continuous and $M$ is compact, by Remmert Proper Mapping theorem, $f$ is proper, so $f(M)$ is an analytic subvariety of $N$.

Note that an analytic variety is irreducible if and only if the smooth part is connected \cite[p. 21]{GH-Principles}, thus $N$ is irreducible because $N$ is a connected complex manifold.

Since $\opname{rank} J(f)|_x = \dim N$, $f$ is a submersion at $x$, then there is an open neighborhood $U$ of $x$ in $M$ and an open neighborhood $V$ of $f(x)$ in $N$ such that $f(U)=V$ and $f|_U$ is a projection. Note that $V=f(U) \subseteq f(M)$, then $N=f(M)\cup V^c$. Since $N$ is irreducible and $f(M)\neq \emptyset$, we obtain that $f(M)=N$.
\end{proof}

\section{Geometry of the deformation space}\label{sec-geo}
 In this section, we will give a geometric description for the deformation space $\overline{\U}$ of an elementary vanishing cycle and then prove the Lefschetz $(1,1)$--theorem using our construction.

Let $S$ be a complex projective surface of degree $d$ in $\P^N$. Denote by $\O:=(\P^N)^\vee$ which parametrizes all hyperplane sections of $S$.
In order to construct a generic net, we consider  the quotient bundle  \begin{equation*}\label{quot-bdle}
\mathcal{Q}:=\frac{{\O\times V}}{\mathcal{T}}
\end{equation*}
over $\O$, where $\O=\P(V)$ for some complex vector space $V\cong \C^{N+1}$ and
$\mathcal{T}$ denotes
the tautological line bundle 
over $\O$  whose fiber at any point is the line in $V$ represented by the point. Clearly, the fiber of the corresponding projective bundle $\P\mathcal{Q}$ over each $t\in \O$ parametrizes all hyperplane sections of $S_t$.
Pick a Lefschetz pencil $\L$ in $\O$. Denote the pullback bundle over $\L$ by $\mathcal{Q}_{\L}=\L\otimes_{\O} \mathcal{Q}$.
Note that the first Chern class of $\mathcal{Q}$ is $1$,
We have the following conclusion.
\begin{lemma}\label{lem-bdle}
 The notations are described above, we have
 \begin{equation*}\label{quot-bdle}
\mathcal{Q}_{\L}\cong \mathcal{O}_{\L}(1)\oplus \mathcal{O}_{\L}^{\oplus N-1}.
\end{equation*}
\end{lemma}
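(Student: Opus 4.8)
The plan is to reduce everything to the universal (Euler) exact sequence on a projective line. First I would write the Lefschetz pencil as $\L = \P(W)$ for a two--dimensional linear subspace $W \subseteq V$; this is possible because a line in $\P(V) = \O$ is exactly the projectivization of a plane. By the description of the tautological bundle, the fiber of $\mathcal{T}$ at a point $[w] \in \P(W)$ is the line $\C w \subseteq W \subseteq V$, so the restriction $\mathcal{T}|_\L$ is canonically the tautological subbundle $\mathcal{O}_{\P(W)}(-1)$ of the trivial bundle $\P(W) \times W$, included into $\P(W) \times V$ through the constant inclusion $\P(W) \times W \hookrightarrow \P(W) \times V$. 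Consequently $\mathcal{Q}_\L = \coker\bigl(\mathcal{O}_{\P(W)}(-1) \hookrightarrow \P(W) \times V\bigr)$.

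Next I would choose a vector--space complement $V = W \oplus U$ with $\dim_\C U = N-1$. Since $U$ is a constant subspace, this induces a splitting of trivial bundles $\P(W) \times V = (\P(W) \times W) \oplus (\P(W) \times U)$ in which the subbundle $\mathcal{O}_{\P(W)}(-1)$ lies entirely in the first factor. Passing to the quotient,
\begin{equation*}
\mathcal{Q}_\L \;\cong\; \frac{\P(W) \times W}{\mathcal{O}_{\P(W)}(-1)} \;\oplus\; \bigl(\P(W) \times U\bigr).
\end{equation*}
The second summand is the trivial bundle of rank $N-1$ on $\L \cong \P^1$, i.e.\ $\mathcal{O}_\L^{\oplus N-1}$. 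The first summand is the rank--one universal quotient bundle of $\P(W) \cong \P^1$; it sits in the Euler sequence $0 \to \mathcal{O}_{\P^1}(-1) \to \mathcal{O}_{\P^1}^{\oplus 2} \to \mathcal{Q}' \to 0$, so $c_1(\mathcal{Q}') = 0 - (-1) = 1$ and hence $\mathcal{Q}' \cong \mathcal{O}_{\P^1}(1)$. Combining the two pieces yields $\mathcal{Q}_\L \cong \mathcal{O}_\L(1) \oplus \mathcal{O}_\L^{\oplus N-1}$, as claimed.

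There is no serious obstacle here; the only step requiring a moment's care is the claim that the constant decomposition $V = W \oplus U$ genuinely splits the inclusion $\mathcal{T}|_\L \hookrightarrow \P(W)\times V$ as \emph{holomorphic} bundles, which holds because $\P(W)\times U$ is a trivial subbundle meeting $\P(W)\times W \supseteq \mathcal{O}_{\P(W)}(-1)$ only in the zero section. One could instead invoke Grothendieck's theorem to write $\mathcal{Q}_\L \cong \bigoplus_{i=1}^N \mathcal{O}_{\P^1}(a_i)$ with $\sum a_i = c_1(\mathcal{Q}_\L) = 1$, but pinning down the splitting type $(1,0,\dots,0)$ still requires knowing the explicit shape of the restricted tautological inclusion, so that route offers no real simplification over the direct computation above.
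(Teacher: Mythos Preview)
Your argument is correct and in fact more elementary than the paper's: you exhibit the splitting directly by choosing a linear complement $U$ to $W$ in $V$, which immediately peels off a trivial rank--$(N-1)$ summand and leaves the universal quotient of $\P^1$. The paper instead invokes Grothendieck's splitting theorem to write $\mathcal{Q}_\L \cong \bigoplus_j \mathcal{O}_\L(m_j)^{\oplus n_j}$, and then pins down the type by two observations: (i) $\mathcal{Q}$ is a quotient of a trivial bundle, hence globally generated, so every $m_j \geq 0$; and (ii) $c_1(\mathcal{Q}_\L)=1$ forces $\sum n_j m_j = 1$, which together with $m_j \geq 0$ gives $(1,0,\dots,0)$.

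Your closing remark, that the Grothendieck route ``still requires knowing the explicit shape of the restricted tautological inclusion,'' is therefore not quite right: the paper's proof shows that global generation plus the Chern--class count suffices, with no further use of the geometry of $\mathcal{T}|_\L$. What your approach buys is an explicit, constructive isomorphism and independence from Grothendieck's theorem; what the paper's approach buys is a template that would work for any restriction of a globally generated bundle of known $c_1$ to $\P^1$, without having to locate a convenient splitting of the ambient trivial bundle.
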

\begin{proof}
By Grothendieck's classification theorem of Vector bundles over $\P^1$ \cite{Grothendieck}, there exist integers $n_1, \cdots, n_k>0$ and $m_1, \cdots, m_k$ with $m_1>\cdots >m_k$ and  $n_1+\cdots +n_k=N$ such that
$$\mathcal{Q}_{\L}=\mathcal{O}_{\L}(m_1)^{\oplus n_1}\oplus\cdots\oplus\mathscr{O}_{\L}(m_k)^{\oplus n_k}.$$
Since $\mathcal{Q}$ is a quotient of a trivial bundle, it can be generated by nonzero global sections, so is $\mathcal{Q}_{\L}$. Note that  a line bundle  of negative degree over a smooth projective complex curve  has no nonzero global sections,  we have $$m_1>\cdots>m_k\geq 0.$$
By the Whitney sum formula, the first chern class of $\mathcal{Q}_{\L}$ is
$$c_1(\mathcal{Q}_{\L})=c_1(\mathcal{Q})=c_1(\O\times V)-c_1(\mathcal{T})=0-(-1)=1,$$
then $n_1m_1+\cdots +n_km_k=1$, which gives us
 $k=2$, $n_1=m_1=1$ and $m_2=0$, $n_2=N-1$.
Therefore,
$$\mathcal{Q}_{\L}\cong \mathcal{O}_{\L}(1)\oplus \mathcal{O}_{\L}^{\oplus (N-1)}.$$
\end{proof}

By Lemma \ref{lem-bdle}, we can pick a general projective subbundle $\PP$  of rank $1$ over $\L$ in $\P\mathcal{Q}_\L$  such that
\begin{equation*}\label{quot-bdle}
\PP\cong \P(\mathcal{O}_{\L}(1)\oplus \mathcal{O}_\L^{\oplus 2}).
\end{equation*}
and the fiber
\begin{equation*}\label{quot-bdle}
\PP_t\cong \P(\mathcal{O}_{\L}(1)\oplus \mathcal{O}_\L)
\end{equation*}
over each $t\in \L^{\sm}$ is a Lefschetz pencil for $S_t$.
Denote by $\PP^{\sm}$ the open subset of $\PP$ corresponding to all smooth hyperplane sections of $S_t$ for all $t\in \L^{\sm}$, i.e., $S_{t,r}=\{P_0, P_1, \cdots, P_{d-1}\}$ is a set of $d$ distinct points for all $(t, r)\in \PP^{\sm}$.
We have the following commutative diagram
$$\xymatrix{
S_{\PP^{\sm}}:=S_{\L^{\sm}}{\times_{\L^{\sm}}} \PP^{\sm} \ar[d]^{\sigma^{\sm}} \ar[r]  & S_{\L^{\sm}} \ar[d]_{\pi^{\sm}}\ar[r]         &S \\
\PP^{\sm}  \ar[r]^{\rho^{\sm}}  & \L^{\sm}     &       }.$$
where $S_{\L^{\sm}}=\left\{(x, t)\in S\times \L^{\sm}\,|\, t(x)=0\right\}$  and $S_{\PP^{\sm}}=\left\{(x, t,r)\in S\times \PP^{\sm}\,|\, t(x)=r(x)=0\right\}$ denote the incidence varieties.

We  call such $\PP$ as a generic net which means the complement $\Sigma:=\PP\setminus \PP^{\sm}$
is an algebraic curve with only finite many ordinary nodes and ordinary cusps as its singularities. More generally, we have the following conclusion.
\begin{proposition}\label{prop-net}
Let $S$ be a smooth irreducible complex projective surface. By embedding $S$ into a sufficient large projective space $\P^N$, any general 2-plane $\PP$ in $\O:=(\P^N)^\vee$ forms a generic net which means that $\PP\setminus \PP^{\sm}$ is an algebraic curve with only finite many ordinary nodes and ordinary cusps as its singularities. Equivalently,
there are only finitely many singular hyperplane sections that either have at most two ordinary double points as singularities or have only one ordinary cusp point as singularity.
\end{proposition}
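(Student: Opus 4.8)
The plan is to realize the space of $2$-planes $\PP \subseteq \O$ as a parameter space and to argue by a dimension count (genericity of the net) on the relevant \emph{discriminant stratification} of $\O$. Let $X^\vee \subseteq \O = (\P^N)^\vee$ be the dual variety of $S$, i.e. the locus of hyperplanes $H$ such that $S \cap H$ is singular. Classical projective duality (see e.g. Voisin, \emph{Hodge Theory and Complex Algebraic Geometry II}) tells us that for $S$ embedded by a sufficiently ample linear system, $X^\vee$ is an irreducible hypersurface, and it carries a natural stratification by the type of the singularity of $S\cap H$: a dense open $X^\vee_o$ where $S\cap H$ has a single ordinary node, a codimension-$1$ stratum $X^\vee_{A_2}$ where $S\cap H$ has a single ordinary cusp ($A_2$), a codimension-$1$ stratum $X^\vee_{2A_1}$ where $S\cap H$ has exactly two nodes, and strata of codimension $\geq 2$ in $X^\vee$ for all worse singularity configurations ($A_3$, $3A_1$, $D_4$, etc.). The key input, which follows from the very-ampleness hypothesis together with standard transversality arguments, is that these codimensions are as expected; this is where one invokes ``by embedding $S$ into a sufficiently large projective space.''

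First I would set up the incidence correspondence. Let $\Gr := \Gr(2,\O)$ be the Grassmannian of $2$-planes in $\O$, of dimension $3(N-2)$, and consider
\begin{equation*}
\I := \{(\PP, t) \in \Gr \times \O \mid t \in \PP\}, \qquad \pr_1 : \I \to \Gr, \quad \pr_2 : \I \to \O,
\end{equation*}
so that $\pr_2$ is a fiber bundle with fiber $\Gr(1,\O_t) \cong \P^{N-2}$ (the $2$-planes through a fixed point $t$), hence $\pr_2$ is smooth of relative dimension $(N-2) + (N-3) = 2N-5$. For each locally closed stratum $Z \subseteq X^\vee$ of codimension $c$ in $\O$ (so $c=1$ for $X^\vee_o$, $c=2$ for $X^\vee_{A_2}$ and $X^\vee_{2A_1}$, and $c \geq 3$ for the rest), the preimage $\pr_2^{-1}(Z)$ is smooth of dimension $(\dim \O - c) + (2N-5) = \dim \Gr - c$. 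The second step is then to intersect strata: for two distinct strata $Z_1, Z_2$ I would use that a general $2$-plane $\PP$ meets $Z_1$ and $Z_2$ in the expected (complementary) dimension, which is a consequence of the irreducibility of $\Gr$ and the Kleiman–Bertini-type transversality statement applied fiberwise over $\O$, exactly in the style of the verification of \textbf{Cond 1}--\textbf{Cond 3} in Section \ref{ss: IndLef}.

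Carrying this out, the generic $2$-plane $\PP$ has the following properties: $\PP \cap X^\vee$ is a curve (since $X^\vee$ is an irreducible hypersurface and $\PP$ is general), all of whose points lie in $X^\vee_o \cup X^\vee_{A_2} \cup X^\vee_{2A_1}$ (the deeper strata have codimension $\geq 3$ in $\O$, hence codimension $\geq 1$ in $\PP \cap X^\vee$ would force dimension $\leq 0$; being general, $\PP$ misses them entirely); moreover $\PP$ meets $X^\vee_{A_2}$ and $X^\vee_{2A_1}$, each of pure codimension $2$ in $\O$, in finitely many points, and at all remaining points of $\PP \cap X^\vee$ the hyperplane section $S\cap H$ has a single node. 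It remains to identify the singularity type of the plane curve $\Sigma := \PP \cap X^\vee = \PP \setminus \PP^{\sm}$ \emph{as an abstract curve}: over $X^\vee_o$ the map $\pr_1$ restricted to $\pr_2^{-1}(X^\vee_o) \to \Gr$ is étale onto its image near $\PP$ (this is essentially the content of \textbf{Cond 3}), so $\Sigma$ is smooth there; at a general point of $X^\vee_{2A_1}$ the two sheets of $X^\vee_o$ (one for each node) cross transversally, producing an ordinary node of $\Sigma$; at a general point of $X^\vee_{A_2}$ the local model of $X^\vee$ is the cuspidal ``swallowtail''-type picture whose generic plane section is an ordinary cusp, producing an ordinary cusp of $\Sigma$. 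For the transversality/genericity at these special points one again uses that $\PP$ is general, moving $\PP$ in the smooth Grassmannian $\Gr$ to achieve transversality of the slice to the relevant strata.

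The main obstacle will be the precise local analysis at the $A_2$ and $2A_1$ strata — that is, proving that the generic plane slice of $X^\vee$ through a general point of $X^\vee_{A_2}$ is an ordinary cusp (and not a higher-order cusp), and through a general point of $X^\vee_{2A_1}$ is an ordinary node. This amounts to controlling the second-order geometry (the Hessian, resp. the relative position of the two branches) of the dual variety along these strata, for which the sufficiently-ample embedding is essential: it guarantees both that the strata have the expected codimension and that $X^\vee$ is ``generic'' enough along them that no unexpected tangencies occur. I would handle this by writing the versal deformation of the relevant plane-curve singularity of $S\cap H$ (an $A_2$ or a pair of $A_1$'s) and observing that the map from $\PP$ to the base of this versal deformation is, for general $\PP$, transverse to the discriminant, whence the singularity of $\Sigma$ at the corresponding point is exactly the generic plane section of the discriminant of that versal deformation, which is the cuspidal cubic (for $A_2$) or two transverse lines (for $2A_1$). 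The remaining statements — finiteness of the singular set of $\Sigma$ and the ``equivalently'' clause translating singularities of $\Sigma$ back into singularity configurations of hyperplane sections — then follow immediately from the stratification description.
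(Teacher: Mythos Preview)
Your proposal is correct and follows the standard modern route: stratify $S^\vee$ by the singularity type of $S\cap H$, check (using sufficient ampleness) that the strata $S^\vee_o$, $S^\vee_{2A_1}$, $S^\vee_{A_2}$ and the deeper ones have the expected codimensions $1$, $2$, $2$, $\geq 3$ in $\O$, slice by a general $2$-plane, and then read off the singularity of the plane section from the discriminant of the miniversal deformation of the relevant curve singularity. This is a complete strategy, and you have correctly isolated the one place where real work is needed (the local analysis at the $A_2$ and $2A_1$ strata).

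The paper takes a genuinely different route. Rather than stratifying $S^\vee$, it identifies the general $2$-plane $\PP\subseteq\O$ with $(\P^N/P_{N-3})^\vee$, where $P_{N-3}\subseteq\P^N$ is the linear center corresponding to $\PP$, and projects $S$ from $P_{N-3}$ down to $\P^N/P_{N-3}\cong\P^2$. A projection--duality result from Tevelev's book (Theorem 1.21) together with a Bertini-type irreducibility lemma (Lemma \ref{lem-Bertini}) is then used to identify $\PP\cap S^\vee$ with $\pi(S)^\vee$, and the paper concludes by invoking the classical fact that the dual of a generic plane curve has only ordinary nodes and cusps. This outsources the local analysis you flag as the main obstacle to a black-box classical statement, so the argument is shorter; on the other hand it is less self-contained, and the final sentence of the paper's proof speaks of the ``general projection of a smooth \emph{curve}'' to a plane, which sits awkwardly with $\dim S=2$ and leaves the reader to unwind what $\pi(S)^\vee$ really means when $\pi(S)=\P^2$. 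Your stratification argument avoids this ambiguity entirely and makes the role of the sufficiently-ample hypothesis explicit: it is exactly what forces the map from $\PP$ to the base of the versal deformation to be a submersion, so that the strata acquire their expected codimensions and the generic plane section of the discriminant has the generic singularity type.
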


First of all, we will prove the following lemma for later usage.

\begin{lemma}\label{lem-Bertini}
 Let $Y$ be an irreducible hypersurface in $\P^N$ with $N\geq 3$ and $H$ be a general hyperplane in $\P^N$, then $Y\cap H$ is irreducible.
\end{lemma}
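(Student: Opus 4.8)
The plan is to reduce Lemma~\ref{lem-Bertini} to a standard Bertini-type irreducibility statement, being careful about the two genuinely different mechanisms that could destroy irreducibility: the hyperplane section could become reducible, or it could fail to be connected (for dimension reasons these coincide once $\dim(Y\cap H)\geq 1$, which holds since $N\geq 3$). First I would observe that $Y$, being an irreducible hypersurface in $\P^N$ with $N\geq 3$, has $\dim Y=N-1\geq 2$. The clean approach is to invoke the Bertini irreducibility theorem in the form: if $Z$ is an irreducible variety of dimension $\geq 2$ in $\P^N$, then a general hyperplane section $Z\cap H$ is again irreducible (see, e.g., Jouanolou or \cite[III.7]{GH-Principles}). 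Applying this with $Z=Y$ gives the conclusion directly, provided one has set up the ambient numerology correctly.

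Since the paper may prefer a more self-contained argument, I would alternatively run the standard incidence-variety/monodromy argument. Consider the universal family $\mathcal{Y}=\{(y,H)\in Y\times(\P^N)^\vee \mid y\in H\}$ with projections $p\colon\mathcal{Y}\to Y$ and $q\colon\mathcal{Y}\to(\P^N)^\vee$. The fibers of $p$ are projective spaces, so $\mathcal{Y}$ is irreducible; hence a general fiber $q^{-1}(H)=Y\cap H$ is irreducible provided $q$ is dominant with irreducible general fiber, which follows once we know the generic fiber is connected and reduced. Connectedness of $Y\cap H$ for general $H$ is exactly the Lefschetz hyperplane theorem applied to a resolution, or Zariski's connectedness theorem; reducedness and the absence of multiple components follows because for general $H$ the scheme $Y\cap H$ is generically smooth (by the usual Bertini smoothness statement applied on the smooth locus of $Y$, whose complement has codimension $\geq 2$ in $Y$, hence meets a general $H$ in codimension $\geq 2$ inside $Y\cap H$). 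Combining irreducibility of the generic fiber with genericity of $H$ yields the lemma.

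The main obstacle, and the point that requires the hypothesis $N\geq 3$, is ruling out that $Y\cap H$ splits into several connected components: this is precisely the failure mode when $\dim(Y\cap H)=0$, i.e.\ when $Y$ is a curve, and is why the lemma restricts to $N\geq 3$ so that $\dim(Y\cap H)=N-2\geq 1$. I would handle this by citing the Lefschetz hyperplane theorem (or its homotopy version) to conclude $H_0(Y\cap H,\Z)=\Z$ for general $H$, after passing to a normalization $\tilde Y\to Y$ if one worries about $Y$ being singular; connectedness is preserved since the normalization is finite and surjective, and a general $H$ pulls back to a general hyperplane section of $\tilde Y$. Once connectedness is in hand, the generic smoothness of $Y\cap H$ along the smooth locus of $Y$ forces $Y\cap H$ to be reduced and irreducible, completing the proof. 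The only routine bookkeeping is verifying that ``general $H$'' conditions (missing the non-normal locus in codimension $\geq 2$, being transverse to a stratification of $Y$, etc.) are each satisfied on a dense open subset of $(\P^N)^\vee$, so their finite intersection is still dense open.
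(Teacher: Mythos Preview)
Your proposal is correct, and it shares with the paper the opening move of introducing the incidence variety $\mathcal{Y}\subseteq Y\times(\P^N)^\vee$ and observing that it is a $\P^{N-1}$-bundle over $Y$, hence irreducible. From there the two arguments diverge. The paper invokes Shafarevich's \emph{First Bertini Theorem}: for a dominant map $f\colon M\to N$ of irreducible varieties in characteristic~$0$, the general fiber is irreducible provided $M$ stays irreducible over $\overline{k(N)}$, equivalently $k(N)$ is algebraically closed in $k(M)$; the paper sets up this function-field criterion (the choice of $f_1,f_2$ algebraically independent in $\C(Y)$ is aimed at this) and concludes. You instead argue geometrically: connectedness of $Y\cap H$ via a Lefschetz-type statement, together with generic smoothness on the smooth locus of $Y$, forces the general fiber to be irreducible. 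Both routes are valid; yours is more hands-on and avoids the function-field black box, while the paper's is shorter once one accepts Shafarevich's theorem. One small caution: your suggestion to pass to a normalization $\tilde Y\to Y$ to apply Lefschetz is a bit delicate, since normalization can in principle disconnect hyperplane sections; it is cleaner to cite directly that an irreducible projective variety of dimension $\geq 2$ has connected hyperplane sections (Enriques--Severi--Zariski or Fulton--Hansen), which needs no smoothness hypothesis and is exactly where $N\geq 3$ enters.
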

\begin{proof}
Let $Y=Z(F)$, where $F\in \C[X_0, \cdots, X_N]$ is an irreducible homogeneous polynomial, then the coordinate ring $\C[Y]=\C[X_0, \cdots, X_N]/(F)$.  Denote by $\C(Y)$ the function field of $Y$, \ie, the fraction field of $\C[Y]$.
Since $\dim Y = \opname{tr.deg}_\C \C(Y)=N-1\geq 2$, we can choose 3 linear homogeneous polynomials $L_0, L_1, L_2 \in \C[X_0, \cdots, X_N]$ such that $f_1=\frac{L_1}{L_0}|_Y$ and $f_2=\frac{L_2}{L_0}|_Y$ are algebraically independent in $\C(Y)$.

 Consider the incidence variety
\begin{align*}
\mathfrak{Y}&=\left\{(x,H)\in \P^N\times(\P^N)^\vee \,|\, x\in H\cap Y\right\}\subseteq Y\times (\P^N)^\vee\\
&=\left\{\left([X_0, \cdots, X_N], [a_0, \cdots, a_N]\right)
\,\big |\, F(X_0, \cdots, X_N)=0, a_0X_0+\cdots + a_NX_N=0\right\},
\end{align*}
which is a $\P^{N-1}$-bundle over $Y$. Since $Y$ is irreducible, so is $\mathfrak{Y}$.
Denote by $\opname{pr}_2: \mathfrak{Y} \twoheadrightarrow (\P^N)^\vee$  the second projection. Consider the first Bertini theorem \cite[p. 139]{Basic}.

\medskip

\noindent{\bf First Bertini Theorem.}\textit{
Let $M$ and $N$ be irreducible varieties defined over a field of characteristic $0$, and $f: M\to N$ a regular map such that $f(M)$ is dense in $N$. Suppose that $M$ remains irreducible over the algebraic closure $\overline{k(N)}$ of $k(N)$. Then there exists an open dense set $U\subseteq N$ such that all the fibres $f^{-1}(y)
$ over $y\in U$ are irreducible.}

\medskip

Generally, a variety $M$ is irreducible over $\overline{k(N)}$ if and only if the map $f^*: k(N) \hookrightarrow k(M)$ embeds $k(N)$ as an algebraically closed subfield of $k(M)$.
Therefore, there is an open dense subset $U\subseteq (\P^N)^\vee$ such that all the fibres $Y\cap H$ over $H\in (\P^N)^\vee $ are irreducible.
\end{proof}

\noindent\textit{Proof of Proposition \ref{prop-net}.}
Pick a general 2-plane $\mathcal{P}\cong \P^2$ in  $\O$, we need to show $\mathcal{P}\setminus\PP^{\sm}=\PP\cap S^\vee$ has only finitely many ordinary nodes and ordinary cusps as its singularities. Let
$P_{N-3}\cong \P^{N-3}$ be the general $(N-3)$-plane  in $\P^N$ corresponding to $\mathcal{P}$. Since a general 2-plane in $\O$ corresponds to a general
$(N-3)$-plane in $\P^N$,  then $P_{N-3}\cap S=\emptyset$ and
$$\mathcal{P}=\{H \in \O \,|\, P_{N-3}\subseteq H\} \cong \P^2.$$
Consider the quotient space
$$\P^N/P_{N-3}=\left\{H\subseteq \P^N \,|\,  P_{N-3}\subseteq H\cong \P^{N-2} \right\}\cong \P^2.$$
We have a projection with center $P_{N-3}$
\begin{align*}
\pi:  \P^N\setminus P_{N-3}&\to \P^N/P_{N-3}\\
x&\mapsto  \pi(x)=\text{\,the } (N-2)\text{-plane in } \P^N \text{ containing } P_{N-3} \text{ and } x.
\end{align*}
Clearly, $\left(\P^N/P_{N-3}\right)^\vee\cong \mathcal{P}$, thus we need to show $\mathcal{P}\cap S^\vee\cong \left(\P^N/P_{N-3}\right)^\vee\cap S^\vee $ has only finitely many ordinary nodes and ordinary cusps as its singularities.

Since $P_{N-3}$ is general and $S^\vee$ is irreducible, by Lemma \ref{lem-Bertini}, $\left(\P^N/P_{N-3}\right)^\vee\cap S^\vee$ is irreducible. Note that $\pi(S)^\vee$ is an irreducible component of $\left(\P/P_{N-3}\right)^\vee\cap S^\vee$ \cite[Theorem 1.21 on p. 11]{ProjDual},
then $\pi(S)^\vee=\left(\P^N/P_{N-3}\right)^\vee\cap S^\vee$. Therefore, we only need to show $\pi(S)^\vee$ also has only finitely many ordinary nodes and ordinary cusps as its singularities, \ie, the dual curve of a general projection of a smooth curve on a 2-plane has only finite many ordinary nodes and ordinary cusps as its singularities, because
the projection of a smooth irreducible complex projective curve on a
general plane is generic.
$\Box$

Next,  Nori¡¯s famous Connectivity
Theorem \cite[Corollary 4.4 on p. 364]{Nori} gives us the following isomorphisms
\begin{align*}
H_{\prim}^{2}(S, \Q)&\cong H^1(\L^{\sm}, R^{1}_{\van}\pi^{\sm}_*\Q),\label{iso-1}\\
    H_{\prim}^{1}(S_t, \Q)&\cong H^1(\PP_t^{\sm}, R^{0}_{\van}\sigma^{\sm}_{t*}\Q),\qquad t\in \L^{\sm},
\end{align*}
which lead us to consider $H_{\van}^{1}(S_t, \Z)$ and the local system $R_{\van}^0\sigma_*^{\sm}(\Z)$ over $\mathcal{P}^{\sm}$, which are fiberwise given by $$H_{\van}^0(S_{t,r}, \Z)=\bigoplus\limits_{i=1}^{d-1} \Z(P_i-P_0)$$ with $S_{t,r}=\{P_0, P_1, \cdots, P_{d-1}\}$. Via the flat Gauss-Manin connection, the total space naturally forms an infinite-sheeted covering space
$$|R_{\van}^0\sigma_*(\Z)|\to \mathcal{P}^{\sm}.$$
Fix an elementary vanishing cycle $\alpha:=P_1-P_0$, there is a unique component $\mathbb{U}$ in $|R_{\van}^0\sigma_*^{\sm}(\Z)|$ corresponding to the stabilizer of $\alpha$ in $\pi_1(\mathcal{P}^{\sm})$. For any $t\in \L^{\sm}$, by composing the topological Abel--Jacobi mapping  \cite{TopAJ} with the projection $J_{\prim}(S_t) \to J_{\van}(S_t)$, we consider
$$\a_t: \mathbb{U}_t \to J_{\van}(S_t).$$
We will use the fiber $\mathbb{U}_t$ to construct a parametrization of $J_{\van}(S_t)$ for all $t\in \L^{\sm}$.

Following the idea in \cite{HodgeLocus}, we construct a canonical completion $\overline{\mathbb{U}}$ of $\mathbb{U}$ to a normal analytic variety and with a proper morphism
$$\overline{\mathbb{U}}\xrightarrow{\overline\sigma} \mathcal{P}$$
which extends $\mathbb{U}\xrightarrow{\sigma} \mathcal{P}^{\sm}$. 
Before we study the geometry of this completion $\overline{\U}$, we have to recall the cyclic quotient singularities.
First of all, we recall a well known conclusion which will be used later.
\begin{lemma}\label{lem-sing}
Let $X$ be an affine variety over a field $k$ and $G$ be a finite group acting on $X$ by algebraic automorphisms over $k$. Denote by $k[X]$  and $k[X/G]$ the coordinate rings of $X$ and $X/G$, respectively. Then
\begin{enumerate}
\item $k[X/G]=k[X]^G=k[y_1,\cdots, y_m]/I$, where $y_1,\cdots, y_m$ are algebraically independent generators and $I$ the ideal of relations among  them.
\item $X/G =\opname{Spec} k[y_1,\cdots, y_m]/I = Z(I)$.
\end{enumerate}
\end{lemma}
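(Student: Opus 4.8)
The plan is to recall the standard construction of the affine quotient of a variety by a finite group and then read off the two assertions from elementary commutative algebra; the only substantive ingredient is E.~Noether's finiteness theorem for rings of invariants.

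The first step is to verify that $k[X]^{G}$ is a finitely generated $k$-algebra. Every $f\in k[X]$ is a root of the monic polynomial $\prod_{g\in G}\bigl(T-g\cdot f\bigr)$, whose coefficients are symmetric in the elements $\{g\cdot f\}_{g\in G}$ and hence lie in $k[X]^{G}$; thus $k[X]$ is integral over $k[X]^{G}$. Writing $k[X]=k[z_{1},\dots,z_{n}]$ and letting $A_{0}\subseteq k[X]^{G}$ be the $k$-subalgebra generated by the coefficients of the monic equations satisfied by $z_{1},\dots,z_{n}$, the ring $A_{0}$ is Noetherian by the Hilbert basis theorem and $k[X]$ is module-finite over $A_{0}$ (it is spanned by the bounded-degree monomials in the $z_{i}$). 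Therefore the $A_{0}$-submodule $k[X]^{G}\subseteq k[X]$ is a finitely generated $A_{0}$-module, hence a finitely generated $k$-algebra; this is the Artin--Tate argument.

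Granting this, part~(1) is formal: the affine quotient is by construction $X/G=\opname{Spec}k[X]^{G}$ (the morphism $X\to\opname{Spec}k[X]^{G}$ dual to the inclusion $k[X]^{G}\hookrightarrow k[X]$ is the geometric quotient, its fibres being the $G$-orbits), so $k[X/G]=k[X]^{G}$. Choosing a finite set of algebra generators of $k[X]^{G}$ and presenting them as the images of indeterminates $y_{1},\dots,y_{m}$ gives a surjection $k[y_{1},\dots,y_{m}]\twoheadrightarrow k[X]^{G}$ from the polynomial ring in the algebraically independent variables $y_{1},\dots,y_{m}$; its kernel is the ideal $I$ of relations, whence $k[X]^{G}\cong k[y_{1},\dots,y_{m}]/I$. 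For part~(2), note that $k[X]^{G}$ is reduced (it sits inside the reduced ring $k[X]$), so $I$ is radical, and under the usual dictionary between affine varieties and finitely generated reduced $k$-algebras the space $\opname{Spec}k[y_{1},\dots,y_{m}]/I$ is exactly the closed subvariety $Z(I)\subseteq\mathbb{A}^{m}$. The only point that requires real work is the finite generation of $k[X]^{G}$; everything else is bookkeeping.
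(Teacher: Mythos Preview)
The paper does not prove this lemma at all: it introduces it with the phrase ``we recall a well known conclusion which will be used later'' and then simply uses it to compute the coordinate rings of $X_{2,1}$ and $X_{3,2}$. Your argument is the standard Noether/Artin--Tate proof of finite generation of $k[X]^{G}$ followed by the routine presentation of the quotient as an affine subvariety, and it is correct; there is nothing to compare since the paper offers no proof of its own.
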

Let $\xi_n$ be a primitive $n$-th root of unity and $k$ a positive integer with $g.c.d(n,k)=1$ and $k<n$.
Consider the cyclic group $C_n=<\xi_n>$ acting on $\C^2$ by $\xi_n \dot (x,y)=( \xi_n x, \xi_n^ky)$.
We call the quotient analytic space $X_{n,k}=\C^2/C_n$ has  \emph{a cyclic quotient singularity} of type $\frac{1}{n}(1,k)$ and
\begin{itemize}
\item $X_{n,k} \cong X_{n',k'}$ if and only if $n=n'$ and either $k=k'$ or $kk' \equiv 1 (\opname{mod} n)$;
\item The exceptional divisor on the minimal resolution $\tilde X_{n,k}$ of $X_{n, k}$ is a Hirzebruch--Jung string, \ie, a connected union $E=\bigcup\limits_{i=1}^{m}E_i$  of smooth rational curves $E_1, \cdots, E_m$  with
\begin{enumerate}
\item[(1)] $E_i\dot E_i = -b_i\leq -2$ given the continued fraction $\frac{n}{k}=[b_1, \cdots, b_m]:=b_1-\frac{1}{b_2-\frac{1}{\cdots - \frac{1}{b_m}}}$.
\item[(2)] $E_iE_{i+1}=1$.
\item[(3)] $E_iE_j=0$ if $|i-j|\geq 2$.
\end{enumerate}
\item $\pi_1(X_{n,k}-\{0\})=C_n$ because $\C^2-\{0\}$ is simply connected and the action on $\C^2-\{0\}$ is free and properly discontinuous (because $C_n$ is of finite order).
\end{itemize}

We only need the following two cases which are described in detail here.
\begin{itemize}
    \item \textit{\bf The node case.}\label{s2} 
    The action of $C_2$ on $\C^2$ is given by $(-1)\cdot (x,y)=( -x, - y)$.
Note that $\C[\C^2]=\C[x,y]$,  by Lemma \ref{lem-sing},  we have
$$\C[X_{2,1}]=\C[x,y]^{C_2}=\C[x^2,xy, y^2]=\C[u,v,w]/<uw-v^2>,$$
so $X_{2,1}=Z(uw-v^2) \subseteq \C^3$, which has only one singularity $0=(0,0,0)$ and $\pi_1(X_{2,1}-\{0\})=\Z/2$.
Considering the continued fraction $\frac{2}{1}=2$, we have $m=1$ and $b_1=2$, \ie, the exceptional divisor $E$ is a smooth irreducible rational curve with $E\cdot E=-2$.
\end{itemize}

\begin{itemize}
    \item\label{s3} {\bf The cusp case.} 
    The action of $C_3$ on $\C^2$ is given by $\xi\cdot (x,y)=( \xi x, \xi^2 y)$, where $\xi=e^{\frac{2\pi\sqrt{-1}}{3}}$.
Note that $\C[\C^2]=\C[x,y]$, similarly, by Lemma \ref{lem-sing}, we have
$$\C[X_{3,2}]=\C[x,y]^{C_3}=\C[x^3,xy, y^3]=\C[u,v,w]/<uw-v^3>,$$
so $X_{3,2}=Z(uw-v^3) \subseteq \C^3$, which has only one singularity $0=(0,0,0)$ and $\pi_1(X_{3,2}-\{0\})=\Z/3$.
Considering the continued fraction $\frac{3}{2}=2-\frac{1}{2}$, we have $m=2$ and $b_1=b_2=2$, \ie, the exceptional divisor $E=E_1\cup E_2$, where $E_1$ and $E_2$ are smooth irreducible rational curves with $E_i\cdot E_i=-2$ for $i=1,2$ and $E_1\cdot E_2=1$.
\end{itemize}

Denote by  $\Sigma^{\sg}$ the set of singularities in $\Sigma$ and $\Sigma^{\sm}:=\Sigma-\Sigma^{\sg}$.
Let $\widehat{\mathcal{P}}$ be the blowup of $\mathcal{P}$ along $\Sigma^{\sg}$ and $\widehat{\Sigma}$ denotes the proper transform of $\Sigma$ in $\widehat{\mathcal{P}}$. Denote by $\tilde{\mathcal{P}}$ the double cover of $\widehat{\mathcal{P}}$ branched along $\widehat{\Sigma}$. In fact, $\tilde{\mathcal{P}}$ is smooth and is the minimal resolution of the double cover of ${\mathcal{P}}$ branched along $\Sigma$. We have the following conclusion about the geometry of $\overline{\mathbb{U}}$.

\begin{theorem}
Let $S_{\mathcal{P}}=\{(x, t, r)\in S\times \mathcal{P}\,|\, t(x)=r(x)=0\}$ and $S_{\tilde{\mathcal{P}}}=\tilde{\mathcal{P}}\times_{\mathcal{P}}S_{{\mathcal{P}}}$, then we have
$$\overline{\mathbb{U}} \cong S_{\tilde{\mathcal{P}}}\times_{\tilde{\mathcal{P}}}S_{\tilde{\mathcal{P}}}\setminus\Delta.$$
\end{theorem}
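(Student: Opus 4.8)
The plan is to describe $\mathbb{U}$ explicitly over $\mathcal{P}^{\sm}$ as a finite \'etale cover, then to realize the canonical completion $\overline{\mathbb{U}}$ as the normalization of a natural closure of $\mathbb{U}$ and to match it with $S_{\widetilde{\mathcal{P}}}\times_{\widetilde{\mathcal{P}}}S_{\widetilde{\mathcal{P}}}\setminus\Delta$ by a purely local analysis along the boundary divisor, in which the cyclic quotient singularities recalled above (the node case $X_{2,1}$ and the cusp case $X_{3,2}$) do the essential work.

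\emph{The open part.} The map $\sigma\colon S_{\mathcal{P}}\to\mathcal{P}$ is a finite flat morphism of degree $d$ between smooth surfaces, with branch curve $\Sigma$, and the monodromy representation on $H_{\van}^{0}(S_{t,r},\mathbb{Z})=\bigoplus_{i=1}^{d-1}\mathbb{Z}(P_i-P_0)$ factors through the monodromy group $\Gamma\subseteq\mathfrak{S}_d$ of $\sigma$ acting by permutations of the $d$ points of a fibre. Since $S$ is irreducible, $S_{\mathcal{P}^{\sm}}$ is connected, so $\Gamma$ is transitive; and restricting to a general $\mathcal{P}_t$, the pencil structure on $S_t$ forces $\Gamma$ to contain the Picard--Lefschetz transpositions attached to the simple tangencies, so $\Gamma$ is a transitive subgroup of $\mathfrak{S}_d$ generated by a conjugacy class of transpositions, hence $\Gamma=\mathfrak{S}_d$. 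Therefore the $\pi_1(\mathcal{P}^{\sm})$--orbit of $\alpha=P_1-P_0$ is $\{P_i-P_j\mid i\neq j\}$ and $\mathbb{U}$ is canonically the \'etale cover of $\mathcal{P}^{\sm}$ parametrizing ordered pairs of distinct points of the fibres of $\sigma$, i.e. $\mathbb{U}=(S_{\mathcal{P}^{\sm}}\times_{\mathcal{P}^{\sm}}S_{\mathcal{P}^{\sm}})\setminus\Delta$, sitting naturally inside $S_{\mathcal{P}}\times_{\mathcal{P}}S_{\mathcal{P}}$. By the construction of \cite{HodgeLocus}, $\overline{\mathbb{U}}$ is a normalization of a closure of $\mathbb{U}$ obtained after base-changing the family so as to trivialize the monodromy around $\Sigma$; it is normal, proper over $\mathcal{P}$, carries a factorization of $\overline{\sigma}$ through $\widetilde{\mathcal{P}}\to\widehat{\mathcal{P}}\to\mathcal{P}$, and agrees with $\mathbb{U}$ over $\mathcal{P}^{\sm}$ up to this base change; it therefore suffices to check that $S_{\widetilde{\mathcal{P}}}\times_{\widetilde{\mathcal{P}}}S_{\widetilde{\mathcal{P}}}\setminus\Delta$ (with $S_{\widetilde{\mathcal{P}}}$ the normalization of $\widetilde{\mathcal{P}}\times_{\mathcal{P}}S_{\mathcal{P}}$) has these properties and the right sheet structure along the boundary.

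\emph{The boundary.} Along $\Sigma^{\sm}$ the local vanishing cycle is a single transposition $\tau=(P_0\,P_1)$, and $\tau$ acts on ordered pairs of distinct points with every non-fixed orbit of length exactly two (the pairs $(P_0,P_k)\leftrightarrow(P_1,P_k)$, $(P_k,P_0)\leftrightarrow(P_k,P_1)$ and $(P_0,P_1)\leftrightarrow(P_1,P_0)$); hence over a punctured transverse disc $\mathbb{U}$ is a disjoint union of trivial sheets and connected double covers, and passing to the double cover $\widetilde{\mathcal{P}}\to\mathcal{P}$ branched along $\Sigma$ kills this monodromy. In a local chart where $S_{\mathcal{P}}$ near an ordinary double point is $\{x^2=\lambda\}$ over $\mathcal{P}$ with coordinates $(\lambda,z)$, putting $\lambda=s^2$ on $\widetilde{\mathcal{P}}$ makes the relevant sheets of the normalized $S_{\widetilde{\mathcal{P}}}$ smooth, and one reads off directly that $S_{\widetilde{\mathcal{P}}}\times_{\widetilde{\mathcal{P}}}S_{\widetilde{\mathcal{P}}}\setminus\Delta$ has precisely the sheet structure of the closure of $\mathbb{U}$ there. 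At a node of $\Sigma$ the local monodromy is generated by two commuting transpositions attached to two distinct ordinary double points of the fibre, at a cusp by an order-$3$ cycle attached to one triple point; blowing up $\mathcal{P}$ at $\Sigma^{\sg}$ separates the two branches at a node and disentangles the cusp, so that the double cover $\widetilde{\mathcal{P}}$ of $\widehat{\mathcal{P}}$ branched along the now smooth $\widehat{\Sigma}$ is smooth and is the minimal resolution of the double cover of $\mathcal{P}$ branched along $\Sigma$, acquiring an $A_1$-configuration (one $(-2)$-curve, the resolution of $X_{2,1}$, continued fraction $\tfrac21=[2]$) over each node and an $A_2$-configuration (two $(-2)$-curves, the resolution of $X_{3,2}$, $\tfrac32=[2,2]$) over each cusp. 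A chart-by-chart computation over these configurations, matching the local models $\{uw=v^2\}$ and $\{uw=v^3\}$ of Lemma~\ref{lem-sing} against the action of the local monodromy group on ordered pairs of distinct points, then shows that $S_{\widetilde{\mathcal{P}}}\times_{\widetilde{\mathcal{P}}}S_{\widetilde{\mathcal{P}}}\setminus\Delta$ is normal with exactly the prescribed sheet structure; since both spaces are normal, proper over $\mathcal{P}$, and coincide over $\mathcal{P}^{\sm}$, the resulting identification is unique and compatible with $\overline{\sigma}$, which finishes the proof.

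\emph{Main obstacle.} The delicate point is this last step over $\Sigma^{\sg}$: one must verify that after the blow-up and the branched double cover the fibre product $S_{\widetilde{\mathcal{P}}}\times_{\widetilde{\mathcal{P}}}S_{\widetilde{\mathcal{P}}}\setminus\Delta$ is genuinely normal --- no worse than the smooth surface $\widetilde{\mathcal{P}}$ --- and that its components correspond one-to-one to the components of $\overline{\mathbb{U}}$ prescribed by the action of the local monodromy group ($\mathbb{Z}/2\times\mathbb{Z}/2$ at a node, a cyclic group containing the $3$-cycle at a cusp) on ordered pairs of distinct points. This is exactly where the Hirzebruch--Jung description of $X_{2,1}$ and $X_{3,2}$ enters, and where one must be careful to distinguish the reduced from the scheme-theoretic fibre product and to keep track of which components lie in the diagonal $\Delta$.
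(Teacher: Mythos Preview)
Your plan is correct and follows essentially the same route as the paper's own proof: identify $\mathbb{U}$ over the smooth locus with the space of ordered pairs of distinct fibre points via the monodromy on $H^0_{\van}$, then complete by a local analysis at the nodes and cusps of $\Sigma$ using the cyclic quotient models $X_{2,1}$ and $X_{3,2}$. The only organizational difference is that the paper first passes to the double cover of $\mathcal{P}':=\mathcal{P}\setminus\Sigma^{\sg}$ branched along $\Sigma^{\sm}$, establishes the isomorphism $\mathbb{G}'\cong \widetilde{S_{\mathcal{P}'}}\times_{\widetilde{\mathcal{P}'}}\widetilde{S_{\mathcal{P}'}}\setminus\Delta$ there via the classification of covering spaces, adds the missing points over $\Sigma^{\sg}$ to obtain a surface $\mathbb{G}$ with cyclic quotient singularities of order $2$ and $3$, and only then blows up to reach $\widetilde{\mathcal{P}}$; you instead go straight to $\widetilde{\mathcal{P}}$ (blow up, then double cover) and argue directly on $S_{\widetilde{\mathcal{P}}}\times_{\widetilde{\mathcal{P}}}S_{\widetilde{\mathcal{P}}}\setminus\Delta$. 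Since $\widetilde{\mathcal{P}}$ is the minimal resolution of the double cover of $\mathcal{P}$ along $\Sigma$, the two orders of operation are equivalent, and your explicit argument that the monodromy group is the full $\mathfrak{S}_d$ is a cleaner way of obtaining the connectedness/degree statement that the paper extracts from the lifting criterion.
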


\begin{proof}
Denote by $\PP'=\PP-\Sigma^{\sg}$ and consider the incidence variety
\begin{align*}
S_{\PP'}&=\left\{(x,t,r)\in S\times \PP' \,|\, t(x)=r(x)=0\right\}.
\end{align*}
Denoting by $\tilde{S_{\PP'}}$ the normalization  of $S_{\PP'}\times_{\PP'}\tilde{\PP'}$, we have the following commutative diagram
$$\xymatrix{\tilde{S_{\PP'}}\times_{\tilde{\PP'}}
\tilde{S_{\PP'}}\ar[d]\ar[r]\ar[dr]^{\pi'}&\tilde{S_{\PP'}}\ar[d]^{\tilde\pi}\ar[r]^g&S_{\PP'}\ar[r]\ar[d]^{\pi}_{d:1}&S\\
\tilde{S_{\PP'}}\ar[r]^{\tilde{\pi}}&\tilde{\PP'}\ar[r]^{2:1}_{\tau}&\PP'},$$
where  $\pi$ is a covering map of degree $d$ branched along $\Sigma^{\sm}$ and $\tilde\pi$ is an unbranched covering map of degree $d$, thus $\pi'$ is an unbranched covering map of degree $d^2$.
Note that the diagonal $\Delta: =\Delta\left(\tilde{S_{\PP'}}\times_{\tilde{\PP'}}\tilde{S_{\PP'}}\right) \cong \tilde{S_{\PP'}}$, then restricting $\pi'$ to $\tilde{S_{\PP'}}\times_{\tilde{\PP'}}\tilde{S_{\PP'}} \setminus \Delta$ denoted by
$$\pi'':  \tilde{S_{\PP'}}\times_{\tilde{\PP'}}\tilde{S_{\PP'}}\setminus \Delta \to \tilde{\PP'}$$
is  an unbranched covering map of degree $d^2-d$.

 Since $\Sigma^{\sm}$ is path-connected,  $\tilde{S_{\PP'}}\times_{\tilde{\PP'}}\tilde{S_{\PP'}}\setminus\Delta $ is path-connected.
Pick $(t_0,r_0)\in \Sigma^{\sm}$.
Denote by
\begin{itemize}
\item $\pi^{-1}(t_0,r_0)=\{ 2x_0, x_1, \cdots, x_{d-2}\}$, where $x_0$ is the unique node on $\pi^{-1}(t_0,r_0)$,
\item $\tilde{\pi}^{-1}(t_0,r_0)=\{x_0^+, x_0^-, x_1, \cdots, x_{d-2}\}$, where $\{x_0^+, x_0^-\}:=g^{-1}(x_0)$.
\end{itemize}
Pick an elementary vanishing cycle $[x_0^+-x_0^-]\in H_0(S_{t_0}, \Z)_{\van}$ corresponding to $x_0$.  We will see there is a finite covering space $\G$ of $\PP$ branched along $\Sigma$ and fixing  $[x_0^+-x_0^-]$.

%
Note that $\Xi_{0}:=(\pi'')^{-1}(t_0,r_0)$ is the set of all ordered pairs of distinct points in $\tilde{\pi}^{-1}(t_0,r_0)$, so $|\Xi_{0}|=d(d-1)$.
We have a monodromy representation associated to the covering map $\pi''$
\begin{align*}
\rho: \pi_1\left(\tilde{\PP'}, (t_0,r_0)\right) \to \opname{Aut}\left(\Xi_{0}\right)\cong S_{d(d-1)}.
\end{align*}
Let $\opname{Aut}\left(\Xi_0\right)_{(x_0^+,x_0^-)}$ be the stabilizer of the ordered pair $(x_0^+, x_0^-)$ in
 $\opname{Aut}\left(\Xi_0\right)$, 
 then $$\opname{Aut}\left(\Xi_0\right)_{(x_0^+,x_0^-)}\cong S_{d(d-1)-1}.$$
 Denote by $G:=\rho^{-1}\left(\opname{Aut}\left(\Xi_0\right)_{(x_0^+,x_0^-)}\right)$, which is a subgroup of
 $\pi_1\left(\tilde{\PP'}, (t_0,r_0)\right)$. Note that $\rho$ induces an injective group homomorphism
 $$\frac{\pi_1\left(\tilde{\PP'}, (t_0,r_0)\right)}{G} \hookrightarrow \frac{\opname{Aut}\left(\Xi_0\right)}{\opname{Aut}\left(\Xi_0\right)_{(x_0^+,x_0^-)}}\cong \frac{S_{d(d-1)}}{S_{d(d-1)-1}},$$
 so the group index
 $$\left[\pi_1(\tilde{\PP'}, t_0): G\right]\leq \frac{|S_{d(d-1)}|}{|S_{d(d-1)-1}|}=\frac{[d(d-1)]!}{[d(d-1)-1]!}=d(d-1).$$
Let $\sigma: (\G', s_0) \to \left(\tilde{\PP'}, (t_0,r_0)\right)$ be the path-connected unbranched covering space of $\tilde{\PP'}$ corresponding to $G$, then
$$\deg \sigma= \left[\pi_1\left(\tilde{\P}', (t_0,r_0)\right): G\right] \leq d(d-1).$$
Since
$\sigma_*\left(\pi_1(\G', s_0)\right)=G$ 
 is a subgroup of $\pi''_*\left(\pi_1\left(\tilde{S_{\PP'}}\times_{\tilde{\PP'}}\tilde{S_{\PP'}}\setminus \Delta , (x_0^+, x_0^-)\right)\right)$, by the lifting criterion, there exists a map $\eta: (\G', s_0)\to\left(\tilde{S_{\PP'}}\times_{\tilde{\PP'}}\tilde{S_{\PP'}}\setminus \Delta, (x_0^+, x_0^-)\right)$ such that the following diagram commutes
 \begin{equation*}
\xymatrix{&\left(\tilde{S_{\PP'}}\times_{\tilde{\PP'}}\tilde{S_{\PP'}}\setminus \Delta, (x_0^+, x_0^-)\right)\ar[d]^{\pi''}\\
(\G', s_0)\ar[r]_{\sigma}\ar@{.>}[ru]^{\eta}&\left(\tilde{\PP'}, (t_0,r_0)\right)}.
\end{equation*}

Since both $\G'$ and $\tilde{S_{\PP'}}\times_{\tilde{\PP'}}\tilde{S_{\PP'}}\setminus \Delta$ are path-connected and
$$\sigma_*\left(\pi_1(\G', t_0)\right)=G=\rho^{-1}\left(\opname{Aut}\left(P_{t_0}\right)_{(x_0^+,x_0^-)}\right)=\pi''_*\left(\pi_1\left(\tilde{S_{\PP'}}\times_{\tilde{\PP'}}\tilde{S_{\PP'}}\setminus \Delta, (x_0^+, x_0^-)\right)\right),$$
by the uniqueness of the classification of covering spaces, we know
$$\G'\cong\tilde{S_{\PP'}}\times_{\tilde{\PP'}}\tilde{S_{\PP'}}\setminus \Delta.$$

In order to complete the covering space $\G'$, we need to analyze the local property for each point on  $\Sigma^{\sg}$, where we have two types of singularities. 
\begin{itemize}
\item {\bf Node points: }
Let $P\in \Sigma^{\sg}$ be a node point, and pick a small open ball $U_P$ centered at $P$ in $\P$ with local coordinates $\{x,y\}$ such that $U_P\cap \Sigma= Z(xy)$.
Denote by $\tilde U_P$ the double cover of $U$ branched along $U_P\cap \Sigma$ on $\C^3$ with coordinates $\{x,y,z\}$, then $\tilde U_P=Z(xy-z^2) \subseteq \tilde \P$, which is exactly $X_{2,1}$. 
Hence, we have $\tilde U_{P}\cong \C^2/{\Z_2}$ and $\pi_1(\tilde U_{P}^\circ)=\Z_2$, where $\tilde U_{P}^\circ=\tilde U_{P}-\{P\}$. Note that $\Z_2$ has two subgroups: $\{0\}$ and $\Z_2$, then $\tilde U_{P}^\circ$ has two kinds of covering spaces: the trivial covering space $\tilde U_{P}^\circ$ and the universal covering space that is isomorphic to $\C^2-\{(0,0)\}$.  Since $\sigma^{-1}(\tilde U_{P}^\circ)\subseteq \G'$ is a covering space of $\tilde U_{P}^\circ$,  we get that $\sigma^{-1}(U_{P}^\circ)$ consists of several copies of $\tilde U_{P}^\circ$ and $\C^2-\{(0,0)\}$. If the copy is $\tilde U_{P}^\circ$, we add $P$ to $\G'$ such that it becomes a surface cyclic quotient singularity of order 2.  If the copy is $\C^2-\{(0,0)\}$, we add the origin to $\G'$ such that it becomes a smooth point.

\item {\bf Cusp points:}
Let $Q\in \Sigma^{\sg}$ be a cusp point, and pick a small open ball $U_Q$ centered at $Q$ in $\P$ with local coordinates $\{x,y\}$ such that $U_Q\cap \Sigma= Z(x^2-y^3)$.
Denote by $\tilde U_Q$ the double cover of $U_Q$ branched along $U_Q\cap \Sigma$ on $\C^3$ with coordinates $\{x,y,z\}$, then $\tilde U_Q=Z(x^2-y^3-z^2) \subseteq \tilde \P$. Since $x^2-y^3-z^2=(x+z)(x-z)-y^3$, by changing local coordinates, $\tilde U_Q$ is exactly $X_{3,2}$. 
Hence, we have $\tilde U_{Q}\cong \C^2/{\Z_3}$ and $\pi_1(\tilde U_{Q}^\circ)=\Z_3$, where $\tilde U_{Q}^\circ:=\tilde U_{Q}-\{Q\}$. Note that  $\Z_3$ has two subgroups: $\{0\}$ and $\Z_3$, then $\tilde U_{Q}^\circ$ has two kinds of covering space: the trivial covering space $\tilde U_{Q}^\circ$ and the universal covering space that is isomorphic to $\C^2-\{(0,0)\}$.  Since $\sigma^{-1}(\tilde U_{Q}^\circ)\subseteq \G'$ is a covering space of $\tilde U_{Q}^\circ$, we get that $\sigma^{-1}(\tilde U_{Q}^\circ)$ consists of several copies of $\tilde U_{Q}^\circ$ and $\C^2-\{(0,0)\}$. If the copy is $\tilde U_{Q}^\circ$, we add $Q$ to $\G'$ such that it becomes a surface cyclic quotient singularity of order 3. If the copy is $\C^2-\{(0,0)\}$, we add the origin to $\G'$ such that it becomes a smooth point.
\end{itemize}
After adding several points to $\G'$ in the above way, we get a projective surface $\G$ with only finite many of surface cyclic quotient singularities of order 2 or 3.
Let $\pi_0: \tilde\G \to \G$ be 
 the blowup of $\G$ at all singular points. We have $\overline\U\cong\tilde\G={S_{\tilde\PP}}\times_{\tilde{\PP}}{S_{\tilde\PP}}\setminus \Delta $.
\end{proof}

 By the type of singularities in previous discussion, we know that the exceptional divisor over each node point is an  smooth irreducible rational curve (\ie, $\P^1$) with self-intersection number $-2$ and the exceptional divisor over each cusp point is two transversal smooth irreducible rational curves (\ie, two transversal $\P^1$) with self-intersection number $-2$. 

The geometry of the deformation space $\overline{\mathbb{U}}$ will help us to search the algebraic 0--cycles on $S_t$, which will trace out the algebraic 1--cycles dual to Hodge classes in $S$ as $t$ moves over $\L$.
In fact, the closure of $\mu_\eta(\L^{\sm})$ in $\overline{\mathbb{U}^{(N)}}$ is the algebraic 1--cycle, which is desired.



\end{document}